\journal{Journal of Computational and Applied Mathematics }
\newcolumntype{R}[1]{>{\raggedleft\arraybackslash }b{#1}}
\newcolumntype{L}[1]{>{\raggedright\arraybackslash }b{#1}}
\newcolumntype{C}[1]{>{\centering\arraybackslash }b{#1}}
\providecommand{\U}[1]{\protect\rule{.1in}{.1in}}
\providecommand{\U}[1]{\protect\rule{.1in}{.1in}}
\providecommand{\U}[1]{\protect\rule{.1in}{.1in}}
\theoremstyle{plain}
\newtheorem{Theorem}{Theorem} [section]
\newtheorem{Corollary}{Corollary}[section]
\newtheorem{Proposition}{Proposition}[section]
\newtheorem{Lemma}{Lemma}[section]
\newtheorem{Remark}{Remark}[section]
\newtheorem{Example}{Example}[section]
\newtheorem{Proof}{Proof}[section]
\newcommand{\R}{\mathbb{R}}
\newcommand{\V}{\ \ \ \ \ \ }
\numberwithin{equation}{section}
\begin{document}

	\afterpage{\newgeometry{top=2cm, bottom=2.5cm, outer=5.5cm, inner=2cm, heightrounded, marginparwidth=3.5cm, marginparsep=0.4cm, includeheadfoot}}

	\newgeometry{left=1.5cm, right=1.5cm, top=1.9cm,bottom=1.2cm}
	 
	\thispagestyle{empty}

\begin{frontmatter}

\title{A new Relaxation Method for Optimal Control of Semilinear Elliptic Variational Inequalities Obstacle Problems}


\author[mymainaddress,mysecondaryaddress]{El Hassene Osmani\corref{mycorrespondingauthor}}
\cortext[mycorrespondingauthor]{Corresponding author}
\ead{el-hassene.osmani@insa-rennes.fr}

\author[mysecondaryaddress]{ Mounir Haddou}
\ead{mounir.haddou@insa-rennes.fr}

\author[mymainaddress]{Naceurdine Bensalem}
\ead{naceurdine.bensalem@univ-setif.dz}

\address[mymainaddress]{University Ferhat Abbas of Setif 1,
Faculty of Sciences, Laboratory of Fundamental and Numerical Mathematics, Setif 19000,  Algeria}
\address[mysecondaryaddress]{ University of  Rennes, INSA Rennes, CNRS, IRMAR - UMR 6625, F-35000 Rennes, France}

\begin{abstract}
In this paper, we investigate optimal control problems governed by  semilinear elliptic variational inequalities involving constraints on the state, and more precisely the obstacle problem. Since we adopt a numerical point of view, we first relax the feasible domain of the problem, then  using both mathematical programming methods and penalization methods we get optimality conditions with smooth Lagrange multipliers. Some numerical experiments using IPOPT algorithm are presented to verify the efficiency of our approach.
\end{abstract}

\begin{keyword}
Optimal control, Lagrange multipliers, Variational inéqualities, mathematical programming, Smoothing methods, IPOPT.
\end{keyword}

\end{frontmatter}

		\section{INTRODUCTION }

	In this paper, we investigate optimal control problems where the state is described by semilinear variational inequalities. These problems involve state constraints as well. We use the method of  \cite{Bergounioux-1}  to obtain a generalization of the results of the quoted paper to the semilinear case. It is known that Lagrange multipliers may not exist for such problems  \cite{Bergounioux-Mignot}. Nevertheless, providing qualifications conditions, one can exhibit multipliers for relaxed problems. These multipliers usually allow to get optimality conditions of Karush-Kuhn-Tucker type. Our purpose is to get optimality conditions that are useful from a numerical point of view. Indeed, we have to ensure the existence of Lagrange multipliers to prove the convergence of lagrangian methods and justify their use. These kind of problems have been extensively studied by many authors, see for instance  \cite{Barbu,Friedman,Mignot-Puel} .   
	
		\vspace{0.4cm}
	
The variational inequality will be interpreted as a state equation, introducing another
control function as in  \cite{Bergounioux-1} . Then, the optimal control problem may be considered as a "standard" control problem governed by a semilinear partial differential equation, involving pure and mixed control-state constraints which are not necessarily convex. In order to derive some optimality conditions, we have to "relax"   the domain; so we do not solve the original problem but this point of view will be justied and commented. Then, the use of Mathematical Programming in Banach spaces methods  \cite{Troltzsch,Zowe}   and penalization techniques provides first-order necessary optimality conditions.
 	\vspace{0.4cm}
 
The first part of this paper is devoted to the presentation of the problem: we recall some classical results on variational inequalities there. In sections $ 3 $ we give approximation formulations of the original problem. In section  $ 4 $ we briefly present some Mathematical Programming results in Banach spaces. Next, we use a penalization technique and apply the tools of the previous section to the penalized problem. We obtain penalized optimality conditions, and assuming some qualification conditions we may pass to the limit to get optimality conditions for the original problem. In the last section, we present some numerical results  and propose  conclusion.

	\vspace{0.4cm}
	\section{PROBLEM SETTING }

Let  $\Omega$ be an open, bounded subset of $\R^{n}$  with a smooth boundray   $  \partial\Omega $.  We shall denote $\mid\mid.\mid\mid_{V}$, the norm in Banach space $ V,$ and more precisely $\mid\mid.\mid\mid $ the $L^{2}(\Omega)$-norm. In the same way, $~  \langle .,.\rangle $ denotes the duality product between   $ H^{-1}( \Omega) $ and $  H_{0}^{1}(\Omega)$, we will denote similarly the $L^{2}(\Omega)$-scalar product when there is no ambiguity. Let us set

	\begin{equation} \label{eq1.1}	
K=\lbrace   y ~  \vert ~ y\in H_{0}^{1}(\Omega), ~   y\geq \psi  ~ a.e.~ \text{in}  ~ \Omega \rbrace, 
\end{equation}
where $ \psi $ is a $ H^{2}(\Omega)\cap H_{0}^{1}(\Omega) $ function.
\vspace{0.3cm}

In the squel $ g $ is a non decreasing, $ C^{1} $ real-valued function such that $ g^{'} $ is bounded, locally Lipschitz continuous and $ f  $ belongs to $ L^{2}(\Omega) $. Moreover, $ U_{ad} $ is a non empty, closed and convex subset of $ L^{2}(\Omega) $. \\
   For each $ v $ in $ U_{ad} $ we consider the following variational inequality problem : find $ y\in K$ such that 

\begin{equation} \label{eq1.2}
  a(y,z)+G(y)-G(z)\geq  \langle\ v+f,y-z \rangle  \quad   \forall z\in K, 
 \end{equation}
	where $ G $	is a primitive function of $ g $, and $ a $  is a bilinear form defined on $ H_{0}^{1}(\Omega)\times H_{0}^{1}(\Omega) $ by 
	
	\begin{equation} \label{eq1.3}
	 a(y,z)=\sum_{i,j=1}^{n}\int_{\Omega}a_{ij}\frac{\partial y}{\partial x_{i}}\frac{\partial z}{\partial x_{j}}~dx+\sum_{i=1}^{n}\int_{\Omega}b_{i}\frac{\partial y}{\partial x_{i}}z ~dx + \int_{\Omega}cyz ~dx,
	 \end{equation}
		
where $ a_{ij}, b_{i},c $ belong to $ L^{\infty}(\Omega)$. Moreover, we assume that  $a_{ij}$	 belongs to $ \mathcal{C}^{0,1}(\bar{\Omega}) $ (the space of Lipschitz continuous functions in $ \Omega $) and that $ c $ is nonnegative. The bilinear form $ a(., .) $ is continuous on \\ $ H_{0}^{1}(\Omega)\cap H_{0}^{1}(\Omega):$ 
\begin{equation} \label{eq1.4}
 \exists M >0, ~\forall (y,z)\in H_{0}^{1}(\Omega)\cap H_{0}^{1}(\Omega) ~~~~~ a(y,z)\leq M\mid\mid y\mid\mid_{ H_{0}^{1}(\Omega)} ~\mid\mid z \mid\mid_{H_{0}^{1}(\Omega)}  
 \end{equation}
and is coercive : 
\begin{equation} \label{eq1.5}
\exists\delta > 0, ~\forall  y\in H_{0}^{1}(\Omega),~~~~~~ a(y,y)\geq \delta \mid\mid y \mid\mid_{H_{0}^{1}(\Omega)}^{2}. 
  \end{equation}
We set $ A $ the elliptic differential operator from $  H_{0}^{1}(\Omega)$ to $ H^{-1}(\Omega) $  defined by 
\\
\begin{center}
 $  \forall(z,v)\in H_{0}^{1}(\Omega) \times H_{0}^{1}(\Omega) ~~~~ \langle\ Ay,z \rangle=a(y,z).$ 
\end{center}

For any $ v\in $  $L^{2}(\Omega) $,  problem  \eqref{eq1.2} has a unique solution $ y=y\lbrack v \rbrack \in H_{0}^{1}(\Omega) $, since the coercivity of the problem in $ y $, and $ v $. As the obstacle function belongs to $  H^{2}(\Omega)$ we have an additional regularity result : $ y\in  H^{2}(\Omega)\times H_{0}^{1}(\Omega)$ (see ~\cite{Barbu-1, Bergounioux}). Moreover \eqref{eq1.2} is equivalent to (see  \cite{Mignot-Puel}) 
\begin{equation} \label{eq1.6}
  Ay+g(y)=f+v+\xi,~ y\geq\psi, ~\xi\geq 0,~\langle \xi,y-\psi \rangle=0,  
\end{equation}
	where $ "\xi\geq 0" $ stands for $ "\xi(x)\geq 0 $ almost everywhere on $~ \Omega $". The above equation can be viewent as the  optimality system for problem \eqref{eq1.2} : $ \xi $ is the multiplier associated to the contraint $ y\geq \psi $. 
	It is a priori an element of $ H^{-1}(\Omega) $ but the regularity result for $ y $ shows that $ \xi\in L^{2}(\Omega) $, so that $  \langle \xi,y-\psi \rangle_{ H^{-1}(\Omega)\times L^{2}(\Omega) } =\langle\ \xi,y-\psi\rangle $.   \\
	\begin{Remark} 
	Applying the simple transformation $ y^{*}=y-\psi $, we may assume that $\psi=0  $.
	Of course functions $ g $ and $ f $ are modified as well, but this shift preserves their generic proprerties ( local lipschitz-continuity, monotonicity).
	\end{Remark}
	In the sequel $ g $ is non decreasing, $ \mathcal{C}^{1} $ real-valued function such that 
	\begin{equation} \label{eq1.6}
	  \exists \gamma \in \R,~\exists \beta \geq 0 ~~ \text{such that} ~~\forall y \in \R ~~  \vert g(y)\vert \leq \gamma +\beta \vert y \vert .
	\end{equation}
	
	We denote similarly the real valued function $ g $ and the Nemitsky operator such that $g(y)(x)=g(y(x))  $ for every $ x\in \Omega. $
	\newpage
	
	 Therefore we keep the same notations. Now, let us consider the optimal control problem defined as follows :
	\begin{center}
	$ \text{min} \left\{J(y,v)\stackrel{def}{=}\frac{1}{2}\int_{\Omega}(y-z_{d})^{2}dx+\frac{\nu}{2}\int_{\Omega}(v-v_{d})^{2}dx  ~\vert ~y=y\lbrack v \rbrack,~ v \in U_{ad},~ y\in K  \right\}$,

	\end{center}
	where $z_{d}, v_{d}\in L^{2}(\Omega)  $ and $ \nu >0 $ are given quantities. \\
	This problem is equivalent to the problem governed by a state equation (instead of inequality) with mixed state and control constraints :
	\begin{center}
	   $\text{min} \left\{J(y,v)=\frac{1}{2}\int_{\Omega}(y-z_{d})^{2}dx+\frac{\nu}{2}\int_{\Omega}(v-v_{d})^{2}dx\right\},  $ \hspace{10mm} $(\mathcal{P})  $ \end{center}
	   \vspace{0.2cm}
	   \begin{equation} \label{eq1.7}
	 Ay+g(y)=f+v+\xi  ~~  \text{in}~ \Omega, ~~  y=0 \quad \text{on} \quad  \partial\Omega ,   
	 \end{equation}
	\begin{equation} \label{eq1.8}
	  (y,v,\xi)\in \mathcal{D}, 
	 \end{equation}

	where 
		\begin{equation} \label{eq1.9}
 \mathcal{D} =  \{ (y,v,\xi)\in H_{0}^{1}(\Omega)\times L^{2}(\Omega)\times L^{2}(\Omega)~\vert~ v \in U_{ad},~ y \geq 0,~ \xi \geq 0,~ \langle\ y,\xi\rangle =0 \}.
	 \end{equation}

	We assume that the feasible set $ \tilde{\mathcal{D}}=\{(y,v,\xi)\in \mathcal{D} ~\vert ~$ relation \eqref{eq1.7} is satisfied\} is non empty. We know, then that problem $ (\mathcal{P}) $ has at least an optimal solution (not necessarily unique) that we shall denote $ (\bar{y},\bar{v},\bar{\xi}), $ since the coercivity of the problem in $ y $, and $ v $ see for instance \cite{Mignot-Puel} .   \\
	Similar problems  have been studied also in  \cite{Bergounioux-Tiba} but in the convex context ($ \mathcal{D} $ is convex). Here, the main difficulty comes from the fact that the feasible domain $ \mathcal{D} $ is non-convex and has an empty relative interior because of the bilinear constraint $ "\langle\ y,\xi \rangle =0". $ \\
	So, we cannot use generic convex analysis methods that have been used for instance in  \cite{Bergounioux-Tiba}. To derive optimality conditions in this case, we are going to use methods adapted to quite general mathematical programming. Unfortunately, the domain $ \mathcal{D} $ (i.e. the constraints set ) does not satisfy the usual (quite weak) assumption of mathematical programming theory. This comes essentially from the fact that $ L^{\infty}$-interior of $ \mathcal{D} $ is empty. \\
	So we  cannot ensure the existence of Lagrange multipliers. This problem does not satisfy classical constraint qualifications  (in the usual KKT sense). One can find several counter-examples in finite and infinite dimension in  \cite{Bergounioux-Mignot} . \\ 
	\section{A RELAXED PROBLEM}
	In order to "relax" the complementarity constraint "$ \langle\ y,\xi\rangle =0 $" we introduce a family of $ \mathcal{C}^{1} $ functions $ \theta_{\alpha}:\R^{+} \to [0, 1[,$ ($  \alpha >0$) with the following properties (see  \cite{HADDOU} for more precision on these smoothing functions):
	\begin{enumerate}[label=(\roman*)]
           \item $ \forall \alpha >0,$  $ \theta_{\alpha}$ is nondecreasing, concave and  $ \theta_{\alpha}(1)<1,$ 
          \item $ \forall \alpha >0 ~~~~ ~~ \theta_{\alpha}(0)=0, $
           \item $  \forall x >0 \quad  \lim\limits_{ \alpha \rightarrow 0} \theta_{\alpha}(x) = 1~~~~ ~~$ and $~~ \lim\limits_{ \alpha \rightarrow 0} \theta_{\alpha}^{'}(0)>0. $
           
   \end{enumerate}

\begin{Example}
	The function below satisfy assumption $ (i-iii) $ ( see  \cite{HADDOU}):
	\begin{center}
	$  \theta_{\alpha}^{1}(x)=\frac{x}{x+\alpha},$\\
	\vspace{0.2cm}
	$  \theta_{\alpha}^{W}(x)= 1-e^{-\frac{x}{\alpha}}$,
	
	\vspace{0.2cm}
	$  \theta_{\alpha}^{\text{log}}(x)= \frac{\text{log}(1+x)}{\text{log}(1+x+\alpha)}$.
	
	\end{center}
	
\end{Example}
	Functions $ \theta_{\alpha} $ are built to approximate the complementarity constraint in the following sense :
	\begin{center}
	$ \forall (x, y) \in \R \times \R \quad xy=0 \tilde{\Longleftrightarrow} \theta_{\alpha}(x)+\theta_{\alpha}(y) \leq 1 $ for $ \alpha $ small enough.
	\end{center}
	\newpage
	More precisely, we have the following proposition.
	
	\begin{Proposition}
	Let $ (y, v, \xi) \in \mathcal{D}$ and $ \theta_{\alpha}^{1} $ satisfying  $ (i-iv) $. Then 
	\begin{center}
	$ \langle\ y, \xi \rangle=0  \Longrightarrow \theta_{\alpha}^{1}(y)+ \theta_{\alpha}^{1}(x) \leq 1  ~~~~~~~~ \text{a.e. in}~ \Omega. $ 
	\end{center}

	\end{Proposition}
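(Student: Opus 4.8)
The plan is to reduce the integral complementarity condition to a pointwise one and then exploit only two elementary features of $\theta_\alpha^1$. First I would record that, because $y\geq 0$ and $\xi\geq 0$ almost everywhere, the product $y\xi$ is nonnegative a.e. in $\Omega$. Combining this with the hypothesis $\langle y,\xi\rangle=\int_\Omega y\xi\,dx=0$, we have a nonnegative integrand whose integral vanishes, which forces $y(x)\xi(x)=0$ for almost every $x\in\Omega$. This is the step where the sign constraints $y\geq 0$ and $\xi\geq 0$ are genuinely used: without them the scalar equality $\langle y,\xi\rangle=0$ would not propagate to a pointwise identity, and the whole argument would break down.

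Once the pointwise relation $y\xi=0$ a.e. is in hand, the rest is an elementary estimate. Fix a point $x$ at which $y(x)\xi(x)=0$; since both numbers are nonnegative, at least one of them is zero. If $y(x)=0$, property $(ii)$ gives $\theta_\alpha^1(y(x))=\theta_\alpha^1(0)=0$, while the fact that $\theta_\alpha^1$ takes its values in $[0,1[$ gives $\theta_\alpha^1(\xi(x))<1$; adding the two yields $\theta_\alpha^1(y(x))+\theta_\alpha^1(\xi(x))<1$. The case $\xi(x)=0$ is identical by symmetry. As this bound holds at almost every point, the claimed inequality $\theta_\alpha^1(y)+\theta_\alpha^1(\xi)\leq 1$ holds a.e. in $\Omega$.

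Alternatively I could carry out the final step with the explicit formula $\theta_\alpha^1(t)=\tfrac{t}{t+\alpha}$: at a point where $y\xi=0$, one of the two fractions is exactly $0$ and the other is strictly less than $1$, so their sum is $<1$. This makes transparent that the only properties of $\theta_\alpha^1$ actually needed are $\theta_\alpha^1(0)=0$ and $\theta_\alpha^1<1$ on $\R^+$, so the same proof applies verbatim to any $\theta_\alpha$ from the family. I do not expect any analytic obstacle beyond the first (standard) passage from the scalar equality to the almost-everywhere identity.
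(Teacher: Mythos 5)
Your proof is correct and takes essentially the same route as the paper's: the paper also reduces the hypothesis $\langle y,\xi\rangle=0$ (together with $y\geq 0$, $\xi\geq 0$) to the pointwise identity $y\xi=0$ a.e., and its Lemma 3.1 establishes the implication $\min(x,y)=0 \Rightarrow \theta_{\alpha}(x)+\theta_{\alpha}(y)\leq 1$ precisely by your observation that $\theta_{\alpha}(0)=0$ and $\theta_{\alpha}<1$. Your alternative ending via the explicit formula $\theta_{\alpha}^{1}(t)=t/(t+\alpha)$ likewise mirrors the computation in the paper's Lemma 3.2, so the only (welcome) difference is that you spell out the measure-theoretic passage from the integral to the pointwise complementarity, which the paper leaves implicit.
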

	
	The proof of the  proposition it is based on the followings lemmas :
	
	\begin{Lemma}
	For any  $ \varepsilon  > 0 $, and $ x,y\geq 0 $, there exists $  \alpha_{0} >0$ such that 
	
	\begin{center}
	$\forall \alpha \leq \alpha_{0}, ~~~(\text{min}(x,y)=0)\Longrightarrow (\theta_{\alpha}(x)+\theta_{\alpha}(y)\leq 1)\Longrightarrow (min(x,y)\leq \varepsilon) $.
	
	\end{center}
	
	\end{Lemma}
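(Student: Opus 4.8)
The statement packages two independent implications, so my plan is to treat them separately and observe that the first one needs no restriction on $\alpha$ at all. The \emph{first} implication, $\min(x,y)=0 \Rightarrow \theta_{\alpha}(x)+\theta_{\alpha}(y)\leq 1$, I would dispatch immediately from the boundary normalisation (ii) together with the codomain constraint. Indeed, if $\min(x,y)=0$ then, say, $x=0$, whence $\theta_{\alpha}(x)=\theta_{\alpha}(0)=0$ by (ii); since $\theta_{\alpha}$ takes values in $[0,1[$ we get $\theta_{\alpha}(y)<1$, and therefore $\theta_{\alpha}(x)+\theta_{\alpha}(y)=\theta_{\alpha}(y)<1\leq 1$. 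This holds for \emph{every} $\alpha>0$, so it contributes no constraint to the choice of $\alpha_{0}$.

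The \emph{second} implication, $\theta_{\alpha}(x)+\theta_{\alpha}(y)\leq 1 \Rightarrow \min(x,y)\leq\varepsilon$, is where the smallness of $\alpha$ enters, and I would prove it by contraposition. Assume $\min(x,y)>\varepsilon$, i.e. $x>\varepsilon$ and $y>\varepsilon$. The key reduction is to use the monotonicity in (i): since $\theta_{\alpha}$ is nondecreasing, $x>\varepsilon$ and $y>\varepsilon$ give $\theta_{\alpha}(x)\geq\theta_{\alpha}(\varepsilon)$ and $\theta_{\alpha}(y)\geq\theta_{\alpha}(\varepsilon)$, hence
\begin{equation*}
\theta_{\alpha}(x)+\theta_{\alpha}(y)\;\geq\;2\,\theta_{\alpha}(\varepsilon).
\end{equation*}
Now I invoke the limit property (iii): because $\varepsilon>0$, we have $\lim_{\alpha\to 0}\theta_{\alpha}(\varepsilon)=1$, so there exists $\alpha_{0}>0$ such that $\theta_{\alpha}(\varepsilon)>\tfrac{1}{2}$ for all $\alpha\leq\alpha_{0}$. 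For such $\alpha$ the displayed bound yields $\theta_{\alpha}(x)+\theta_{\alpha}(y)>1$, which contradicts the hypothesis $\theta_{\alpha}(x)+\theta_{\alpha}(y)\leq 1$. This establishes the contrapositive and hence the second implication for every $\alpha\leq\alpha_{0}$.

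I do not expect a genuine obstacle here: the only subtlety is recognising that monotonicity lets one replace the unknown values $x,y$ by the single threshold $\varepsilon$, after which (iii) supplies the quantitative gain needed to exceed $1$. Note that the resulting $\alpha_{0}$ depends only on $\varepsilon$ (through the rate at which $\theta_{\alpha}(\varepsilon)\to 1$) and not on $x,y$, so the conclusion is in fact uniform in $(x,y)$; in the degenerate case $\min(x,y)\leq\varepsilon$ the target inequality already holds and the implication is trivially true. The concavity assumption in (i) is not required for this particular lemma, being reserved for the sharper comparison estimates used elsewhere.
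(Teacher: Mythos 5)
Your proof is correct and follows essentially the same route as the paper: the first implication from $\theta_{\alpha}(0)=0$ and $\theta_{\alpha}\leq 1$, and the second by contraposition, using monotonicity to reduce to $\theta_{\alpha}(\varepsilon)$ and property (iii) to force $2\theta_{\alpha}(\varepsilon)>1$ for small $\alpha$. The paper phrases the last step with an auxiliary $r<\tfrac{1}{2}$ where you directly require $\theta_{\alpha}(\varepsilon)>\tfrac{1}{2}$, which is the same argument.
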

	
	\begin{Proof}
The first property is obvious since $ \theta_{\alpha}(0)=0 $ and $ \theta_{\alpha} \leq 1. $ \\
Using assumtion $(iii)$ for $ x=\varepsilon, $ we have \\
\begin{center}
$  \forall r>0, \quad   \exists \alpha_{0}>0 ~\vert ~ \forall \alpha \leq \alpha_{0} \quad 1-\theta_{\alpha}(\varepsilon) < r,$
\end{center}
	
	so that, if we suppose that $ \text{min}(x,y)>\varepsilon, $ assumption $(i)$ gives 
	\begin{center}
	$  \forall r>0,\quad \theta_{\alpha}(x)+\theta_{\alpha}(y)> 2\theta_{\alpha}(\varepsilon)>2(1-r). $
	\end{center}
	Then if we choose $ r <\frac{1}{2}, $ we obtain that $ \theta_{\alpha}(x)+\theta_{\alpha}(y)>1. $
	\end{Proof}
	  \begin{Lemma}
	we have 
	\begin{center}
	 \begin{enumerate}[(1)]
	\item $ \forall x \geq 0,~ \forall y \geq 0   ~~ $   $ \theta_{\alpha}^{1}(x)+ \theta_{\alpha}^{1}(y)\leq 1 \Longleftrightarrow  x.y\leq \alpha^{2},~~  \text{and}   $ 
	\item   $ \forall x \geq 0,~ \forall y \geq 0 ~~ $ $ x.y=0 \Longrightarrow  \theta_{\alpha}^{\geq 1}(x)+ \theta_{\alpha}^{\geq 1}(y)\leq 1 \Longrightarrow x.y\leq \alpha^{2},  $
	\end{enumerate}	
	\end{center}
	where $ \theta_{\alpha}^{\geq 1} $ verifying $(i-iv) $ and  $ \theta_{\alpha}^{\geq 1}  \geq \theta_{\alpha}^{ 1}$.
	\end{Lemma}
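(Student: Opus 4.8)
The plan is to handle the two items separately and to reduce everything to the single explicit computation available for $\theta_\alpha^1$. For item (1), I would argue by direct computation using the closed form $\theta_\alpha^1(x)=\frac{x}{x+\alpha}$ and $\theta_\alpha^1(y)=\frac{y}{y+\alpha}$. I place the sum over the common denominator $(x+\alpha)(y+\alpha)$, which is strictly positive since $\alpha>0$, so multiplying through is an equivalence. The condition $\theta_\alpha^1(x)+\theta_\alpha^1(y)\leq 1$ then becomes $2xy+\alpha(x+y)\leq(x+\alpha)(y+\alpha)$, and expanding the right-hand side as $xy+\alpha(x+y)+\alpha^2$ collapses this to $xy\leq\alpha^2$. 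Because every manipulation is reversible (the denominators never vanish), this yields the stated equivalence in both directions simultaneously.

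For item (2), the first implication is immediate from the defining properties of the smoothing functions. If $xy=0$ with $x,y\geq 0$, then one of the two factors vanishes, say $x=0$; property (ii) gives $\theta_\alpha^{\geq 1}(0)=0$, while $\theta_\alpha^{\geq 1}(y)<1$ because the range of these functions is $[0,1[$. Hence $\theta_\alpha^{\geq 1}(x)+\theta_\alpha^{\geq 1}(y)<1$, which is stronger than the required $\leq 1$.

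The second implication is where the comparison hypothesis $\theta_\alpha^{\geq 1}\geq\theta_\alpha^1$ does all the work: from it I obtain the chain $\theta_\alpha^1(x)+\theta_\alpha^1(y)\leq\theta_\alpha^{\geq 1}(x)+\theta_\alpha^{\geq 1}(y)\leq 1$, and then item (1) delivers $xy\leq\alpha^2$. Thus item (2) follows entirely from item (1) together with the pointwise domination, with no fresh estimate needed. I do not expect a genuine obstacle here, since (1) is a reversible algebraic identity and (2) is a short deduction from it; the only points requiring care are verifying that the equivalence in (1) is truly two-sided, which hinges on the strict positivity of $(x+\alpha)(y+\alpha)$ guaranteed by $\alpha>0$, and, in (2), using the monotone bound in the correct direction so that the inequalities line up to invoke (1).
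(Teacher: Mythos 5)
Your proof is correct and follows essentially the same route as the paper: the same common-denominator computation establishing the two-sided equivalence in (1), and the same deduction of (2) from the pointwise domination $\theta_\alpha^{\geq 1}\geq\theta_\alpha^1$ combined with (1). The only cosmetic difference is that the paper dispatches the first implication of (2) by citing its Lemma 3.1 (whose proof is exactly your one-line argument via $\theta_\alpha(0)=0$ and $\theta_\alpha<1$), whereas you prove it inline.
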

	\begin{Proof}
	$(1)$ We have 
	\begin{center}
	$ \theta_{\alpha}^{1}(x)+ \theta_{\alpha}^{1}(y) = \frac{2xy+\alpha x+\alpha y}{xy+\alpha x+\alpha y+\alpha^{2}},$
	\end{center}
	so that 
\begin{equation}\label{eq1.10}
 \begin{split}	 
\theta_{\alpha}^{1}(x)+ \theta_{\alpha}^{1}(y)\leq 1  & \Longleftrightarrow 2xy+\alpha x+\alpha y \leq xy+\alpha x+\alpha y+\alpha^{2} \\
&	 \Longleftrightarrow  x.y \leq \alpha^{2}.
\end{split}	 
\end{equation}
	The first part of $ (2) $ follows obviously form Lemma 3.1 and the second one is a direct consequence of $ (1) $ since 
	\begin{center}
	$ \theta_{\alpha}^{\geq 1}(x)+ \theta_{\alpha}^{\geq 1}(y)\leq 1 \Longrightarrow \theta_{\alpha}^{1}(x)+ \theta_{\alpha}^{1}(y)\leq 1.  $
	\end{center}
	\end{Proof}

	More precisely, we consider the domain $ \mathcal{D}_{\alpha} $ instead of $ \mathcal{D} $, with $ \alpha >0 $   (using the function $\theta_{\alpha}^{1})   $  we obtain :  
	\begin{equation} \label{eq1.11}
	 \mathcal{D}_{\alpha}=  \left\{ (y,v,\xi)\in H_{0}^{1}(\Omega)\times L^{2}(\Omega)\times L^{2}(\Omega)~\vert ~ v\in U_{ad},~ y\geq 0,~ \xi \geq 0, ~\frac{y}{y+\alpha}+\frac{\xi}{\xi+\alpha}\leq 1,~ a.e. ~in ~\Omega \right\}.
	\end{equation}
We may justify and motivate this points of view numerically, since it is usually not possible to ensure $  "\langle\ y,\xi \rangle =0"$  during a computation but rather $ "\frac{y}{y+\alpha}+\frac{\xi}{\xi+\alpha}\leq 1" $ where $ \alpha $ is a prescribed tolerance : it may be chosen small as wanted, but strictly positive. \\
So the problem turns to be qualified if the bilinear constraint $  "\langle\ y,\xi \rangle =0"$ is relaxed to $ "\frac{y}{y+\alpha}+\frac{\xi}{\xi+\alpha}\leq 1" $ a.e. in $ \Omega $.

	In the sequel, we consider an optimal control problem $ (\mathcal{P}^{\alpha}) $ where the feasible domain is $ \mathcal{D}_{\alpha} $ instead of $ \mathcal{D}  $.
	\newpage
	 Moreover, we must add a bound constraint on the control $ \xi $ to be able to ensure the existence of a solution of  this relaxed problem. More precisely we consider : 
	
\begin{center}
	$$
	(\mathcal{P}^{\alpha}) ~~~  \left\{
	\begin{array}{llllll} 
	\text{min}~ J(y,v) ~~~~~~~~~~~~~~~~~~~~~~~~~~~~~~~~~~~~~~~~~~~~  \\
	Ay+g(y)= f+v+\xi ~ \text{in} ~ \Omega, ~y \in H_{0}^{1}(\Omega),\\
	(y,v,\xi)\in \mathcal{D}_{\alpha, R} 
	\end{array}
	\right.
	$$
\end{center}

		where $ R >0 $ may be very large and 
		\begin{center}
		$  \mathcal{D}_{\alpha, R}=\{(y,v,\xi)\in \mathcal{D}_{\alpha}~\vert ~  \mid\mid \xi\mid\mid_{L^{2}(\Omega)} ~ \leq R \} $.  
		\end{center}

	From now on, we omit the index $ R $ since this constant is definitely fixed, such that 
	\begin{equation} \label{eq1.12}
	  R  \geq   ~\mid\mid \bar{\xi}\mid\mid_{L^{2}(\Omega)},  
	\end{equation}
	(we recall that $ (\bar{y},\bar{v},\bar{\xi}) $ is a solution of $( \mathcal{P}) $).\\
	We will denote $\mathcal{D}_{\alpha}:=\mathcal{D}_{\alpha, R}$, and $ V_{ad}=\{\xi\in L^{2}(\Omega) ~\vert ~ \xi\geq 0, \mid\mid \xi\mid\mid_{L^{2}(\Omega)}\leq R \} $.$ V_{ad}$ is obviously a closed, convex subset of $ L^{2}(\Omega) $ \\
	As  $ (\bar{y},\bar{v},\bar{\xi}) \in  \mathcal{D}  $, we see (with \eqref{eq1.12})   that $ \mathcal{D}_{\alpha} $ is non empty for any $\alpha >0$.
	\subsection{Existence Result}
	In order to prove an existence result for $(\mathcal{P}^{\alpha})$, we state first a basic but essential lemma.  
	
	\begin{Lemma}
	Assume that $ (y_{n},v_{n}) $ is a bounded sequence in $H_{0}^{1}(\Omega) \times L^{2}(\Omega)$ such that 
	$\xi_{n} \mathrel{\mathop:}=Ay_{n}+g(y_{n})-f-v_{n} $ is bounded in $ L^{2}(\Omega) $. Then, one may extract subsequences  (still denoted similarly) such that

	\begin{itemize}
  \item $ v_{n}$ converges weakly  to some   $ \tilde{v} $ in $ L^{2}(\Omega) $,
  \item $ y_{n}$  converges strongly to some   $ \tilde{y} $ in $ H_{0}^{1}(\Omega) $,
    \item $ g(y_{n})$   converges strongly to  $ g(\tilde{y}) $ in $ L^{2}(\Omega) $,
      \item $ Ay_{n}+g(y_{n})-f-v_{n}$  converges weakly to   $ A\tilde{y}+g(\tilde{y})-f-\tilde{v} $ in $ L^{2}(\Omega). $
\end{itemize}
\end{Lemma}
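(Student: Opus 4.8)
The plan is to extract weak limits from the three boundedness hypotheses and then to upgrade the convergence of $y_n$ to a strong one by exploiting the monotonicity of $g$ together with the coercivity of $a$; the passage to the limit in the equation then delivers the remaining assertions.

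First I would use reflexivity and boundedness to pass to successive subsequences. Since $(v_n)$ is bounded in $L^2(\Omega)$, a subsequence satisfies $v_n \rightharpoonup \tilde v$ weakly in $L^2(\Omega)$; since $(\xi_n)$ is bounded in $L^2(\Omega)$, a further subsequence satisfies $\xi_n \rightharpoonup \tilde\xi$ weakly in $L^2(\Omega)$ for some $\tilde\xi \in L^2(\Omega)$; and since $(y_n)$ is bounded in $H_0^1(\Omega)$, a further subsequence satisfies $y_n \rightharpoonup \tilde y$ weakly in $H_0^1(\Omega)$. By the Rellich--Kondrachov theorem the embedding $H_0^1(\Omega) \hookrightarrow L^2(\Omega)$ is compact, so along this subsequence $y_n \to \tilde y$ strongly in $L^2(\Omega)$. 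This already yields the first bullet and provides the ingredients for the rest.

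The core of the lemma is the strong $H_0^1(\Omega)$ convergence, which is also the step I expect to be the main obstacle, since weak convergence alone is insufficient. Writing the defining relation for two indices,
\begin{equation*}
Ay_n + g(y_n) = f + v_n + \xi_n, \qquad Ay_m + g(y_m) = f + v_m + \xi_m,
\end{equation*}
I would subtract them and test the difference against $y_n - y_m \in H_0^1(\Omega)$. Using $\langle A(y_n-y_m), y_n - y_m\rangle = a(y_n - y_m, y_n - y_m)$ and the monotonicity of $g$, which gives $\langle g(y_n) - g(y_m), y_n - y_m\rangle \ge 0$, the coercivity \eqref{eq1.5} yields
\begin{equation*}
\delta \, \| y_n - y_m \|_{H_0^1(\Omega)}^2 \le a(y_n - y_m, y_n - y_m) \le \langle (v_n - v_m) + (\xi_n - \xi_m),\, y_n - y_m \rangle .
\end{equation*}
The sequences $(v_n)$ and $(\xi_n)$ are bounded in $L^2(\Omega)$, so the first factor on the right is bounded, while $\| y_n - y_m \|_{L^2(\Omega)} \le \| y_n - \tilde y \|_{L^2(\Omega)} + \| y_m - \tilde y \|_{L^2(\Omega)} \to 0$ by the strong $L^2$ convergence established above. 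Cauchy--Schwarz then forces the right-hand side to $0$, so $(y_n)$ is Cauchy in $H_0^1(\Omega)$; its strong limit must coincide with the weak limit $\tilde y$, giving the second bullet.

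Finally, the last two bullets follow by continuity. Since $g'$ is bounded, the Nemytskii operator $g$ is Lipschitz from $L^2(\Omega)$ into itself, so the convergence $y_n \to \tilde y$ in $H_0^1(\Omega)$ (hence in $L^2(\Omega)$) gives $g(y_n) \to g(\tilde y)$ strongly in $L^2(\Omega)$, which is the third bullet. For the fourth, strong convergence $y_n \to \tilde y$ in $H_0^1(\Omega)$ and continuity of $A \colon H_0^1(\Omega) \to H^{-1}(\Omega)$ give $Ay_n \to A\tilde y$ in $H^{-1}(\Omega)$; combining this with $g(y_n) \to g(\tilde y)$ strongly and $v_n \rightharpoonup \tilde v$ weakly, I would pass to the limit in $\langle \xi_n, \phi\rangle = \langle Ay_n + g(y_n) - f - v_n,\, \phi\rangle$ for every $\phi \in H_0^1(\Omega)$. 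Comparing with the weak $L^2$ limit $\tilde\xi$ extracted in the first step identifies $\tilde\xi = A\tilde y + g(\tilde y) - f - \tilde v$, which in particular shows that this expression belongs to $L^2(\Omega)$ and establishes the fourth bullet.
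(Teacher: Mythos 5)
Your proof is correct, but the route to the crucial second bullet (strong $H_0^1(\Omega)$ convergence) is genuinely different from the paper's. You subtract the defining relations for two indices, test against $y_n-y_m$, discard the nonlinear term by monotonicity of $g$, and combine coercivity \eqref{eq1.5} with the Rellich compactness $y_n\to\tilde y$ in $L^2(\Omega)$ to show $(y_n)$ is Cauchy in $H_0^1(\Omega)$. The paper instead first proves the third bullet ($g(y_n)\to g(\tilde y)$ strongly in $L^2(\Omega)$, via the growth condition \eqref{eq1.6}, Krasnosel'skii's theorem on Nemytskii operators, a.e.\ convergence and Lebesgue's theorem), deduces that $Ay_n=-g(y_n)+f+v_n+\xi_n$ is bounded in $L^2(\Omega)$ with weak $L^2$ limit $A\tilde y$, upgrades this to strong convergence in $H^{-1}(\Omega)$ (compactness of $L^2(\Omega)\hookrightarrow H^{-1}(\Omega)$), and finally invokes that $A$ is an isomorphism from $H_0^1(\Omega)$ onto $H^{-1}(\Omega)$ to get $y_n\to\tilde y$ strongly in $H_0^1(\Omega)$. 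The trade-off: your argument is more elementary and self-contained (no Nemytskii-operator theorem, no isomorphism/compact-embedding step), but it leans essentially on the monotonicity of $g$, and your treatment of the third bullet uses the boundedness of $g'$ (global Lipschitz continuity of the Nemytskii map), a stronger hypothesis than the linear growth \eqref{eq1.6} that the paper's argument needs — both are standing assumptions in Section 2, so either is legitimate here, but the paper's proof would survive for non-monotone $g$ with mere linear growth, whereas yours would not. Your identification of the fourth bullet by testing against $\phi\in H_0^1(\Omega)$ and using density in $L^2(\Omega)$ together with the $L^2$-boundedness of $\xi_n$ is sound and matches the paper's conclusion.
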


 \begin{Proof}

 Let $ (y_{n},v_{n}) $ be a bounded sequence in $H_{0}^{1}(\Omega) \times L^{2}(\Omega)$; therefore $ (y_{n},v_{n}) $ weakly converges to some $ (\tilde{y},\tilde{v}) $ in $H_{0}^{1}(\Omega) \times L^{2}(\Omega)$ (up to a subsequence). Similarly, $\xi_{n}$ weakly converges to some  $ \tilde{\xi} $ in $  L_{2}(\Omega)$. Thanks to  \cite{Krasnosel-Rutickii}  (Theorem 17.5, p174), assumption \eqref{eq1.6} yields that 
\begin{center}
$ (y_{n})_{n\geq 0} $ bounded in $ L^{2}(\Omega) \Longrightarrow (g(y_{n}))_{n\geq 0} $ bounded in $ L^{2}(\Omega) $.
\end{center}
	
	As, $ y_{n} $ weakly converges to $ \tilde{y} $ in  $ H_{0}^{1}(\Omega) $, it strongly converges in $ L^{2}(\Omega) $ a.e in $ \Omega $. As $ g $ is continuous, $ g(y_{n}) $ converges a.e. in $\Omega  $  as well (up to subsequences).
	We conclude then (Lebesgue theorem), that $ g(y_{n}) $ strongly converges to $ g(\tilde{y})$ in   $  L^{2}(\Omega) $. \\
	
	Moreover when  $Ay_{n}=-g(y_{n})+f+v_{n}+\xi_{n}  $ is bounded in $L^{2}(\Omega)$ it will and  converge weakly to some  $ \tilde{z} $ in $L^{2}(\Omega)$. As $ y_{n} $ weakly converges to  $ \tilde{y} $ in $ H_{0}^{1}(\Omega) $, then $ Ay_{n} $ converges to $ A\tilde{y} $ in $ H^{-1}(\Omega) $, so $ \tilde{z} =A\tilde{y}$ and $ Ay_{n}$ weakly converges to $A\tilde{y}$ in $ L^{2}(\Omega) $ as well. 
Therefore  $ Ay_{n}$ strongly converges to $A\tilde{y}$ in $ H^{-1}(\Omega) $. \\
	Finally we get the weak convergence of $ Ay_{n}+g(y_{n})-f-v_{n}$ to $ A\tilde{y}+g(\tilde{y})-f-\tilde{v} $ in $L^{2}(\Omega)$ and the strong convergence of $ y_{n} $ to $\tilde{y }$ in $ H_{0}^{1}(\Omega) $.    
	  \end{Proof}

	  So that, we can consider that problem $( \mathcal{P}^{\alpha} )$ is a  "good" approximation of the original problem  $( \mathcal{P} )$ in tn the foloowing sense : 
	   
	  \begin{Theorem}
      
      For any  $\alpha > 0 $, $( \mathcal{P}^{\alpha} ) $ has at least one optimal solution (denoted $ (y_{\alpha},v_{\alpha},\xi_{\alpha} )$). Moreover. when  $\alpha $ goes to $  0$, $ y_{\alpha} $ strongly converges to $ \tilde{y} $  in $ H_{0}^{1}(\Omega)~$ (up to a subsequence), $ v_{\alpha}$ strongly converges to $ \tilde{v} $  in $  L^{2}(\Omega)~$(up to a subsequence),  $ \xi_{\alpha}~ $  weakly converges to $ \tilde{\xi} $ in $  L^{2}(\Omega) $ (up to a subsequence),  where $(\tilde{y},\tilde{v},\tilde{\xi})$ is a solution of $(\mathcal{P})$.

  \end{Theorem}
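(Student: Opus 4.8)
The plan is to prove existence by the direct method of the calculus of variations and then to pass to the limit $\alpha\to 0$ by a compactness argument, leaning throughout on Lemma 3.3. Fixing $\alpha>0$, I would take a minimizing sequence $(y_n,v_n,\xi_n)\subset\mathcal{D}_{\alpha,R}$ for $(\mathcal{P}^{\alpha})$. The bound $\norme{\xi_n}\le R$ controls $\xi_n$ in $L^{2}(\Omega)$; the term $\frac{\nu}{2}\norme{v-v_d}^{2}$ in $J$ bounds $v_n$ in $L^{2}(\Omega)$ along the minimizing sequence; and testing the state equation $Ay_n+g(y_n)=f+v_n+\xi_n$ with $y_n$, using the coercivity \eqref{eq1.5} together with the monotonicity of $g$, bounds $y_n$ in $H_{0}^{1}(\Omega)$. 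Lemma 3.3 then yields, up to a subsequence, $v_n\rightharpoonup v_\alpha$ weakly, $y_n\to y_\alpha$ strongly in $H_{0}^{1}(\Omega)$, $g(y_n)\to g(y_\alpha)$ strongly, and $\xi_n\rightharpoonup \xi_\alpha=Ay_\alpha+g(y_\alpha)-f-v_\alpha$ weakly in $L^{2}(\Omega)$.

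To finish the existence part I would check that $(y_\alpha,v_\alpha,\xi_\alpha)\in\mathcal{D}_{\alpha,R}$. The constraints $v_\alpha\in U_{ad}$, $\xi_\alpha\ge 0$ and $\norme{\xi_\alpha}\le R$ pass to the limit since the corresponding sets are closed and convex, hence weakly closed, and $y_\alpha\ge 0$ follows from a.e.\ convergence. The delicate point is the relaxed complementarity, which I would rewrite via Lemma 3.2(1) as $y_n\xi_n\le\alpha^{2}$ a.e.\ Because $y_n\to y_\alpha$ strongly and $\xi_n\rightharpoonup\xi_\alpha$ weakly, one has $\int_\Omega y_n\xi_n\,\phi\,dx\to\int_\Omega y_\alpha\xi_\alpha\,\phi\,dx$ for every $\phi\in L^{\infty}(\Omega)$; testing the inequality against $\phi\ge 0$ gives $y_\alpha\xi_\alpha\le\alpha^{2}$ a.e., i.e.\ the relaxed constraint. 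Weak lower semicontinuity of $J$ (continuous in $y$ by strong convergence, convex and continuous in $v$) yields $J(y_\alpha,v_\alpha)\le\liminf J(y_n,v_n)$, so $(y_\alpha,v_\alpha,\xi_\alpha)$ is optimal for $(\mathcal{P}^{\alpha})$.

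For the convergence as $\alpha\to 0$, the crucial observation is that the optimal triple $(\bar y,\bar v,\bar\xi)$ of $(\mathcal{P})$ is feasible for every $(\mathcal{P}^{\alpha})$: indeed $\langle\bar y,\bar\xi\rangle=0$ with $\bar y,\bar\xi\ge 0$ forces $\bar y\,\bar\xi=0\le\alpha^{2}$ a.e., hence the relaxed constraint by Lemma 3.2, while $\norme{\bar\xi}\le R$ by \eqref{eq1.12}. Optimality therefore gives the uniform bound $J(y_\alpha,v_\alpha)\le J(\bar y,\bar v)$, which bounds $v_\alpha$ and, through the same coercivity estimate, $y_\alpha$. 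A second application of Lemma 3.3 produces the announced limits $(\tilde y,\tilde v,\tilde\xi)$ with $\tilde\xi=A\tilde y+g(\tilde y)-f-\tilde v$, so the state equation \eqref{eq1.7} holds in the limit. The complementarity is recovered from $0\le\int_\Omega\tilde y\,\tilde\xi\,dx=\lim_{\alpha\to 0}\int_\Omega y_\alpha\xi_\alpha\,dx\le\lim_{\alpha\to 0}\alpha^{2}\vert\Omega\vert=0$ together with $\tilde y\,\tilde\xi\ge 0$ a.e.; the remaining constraints pass to the limit as before, so $(\tilde y,\tilde v,\tilde\xi)\in\mathcal{D}$. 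Lower semicontinuity then gives $J(\tilde y,\tilde v)\le\liminf J(y_\alpha,v_\alpha)\le J(\bar y,\bar v)=\min(\mathcal{P})$, while feasibility forces $J(\tilde y,\tilde v)\ge\min(\mathcal{P})$; hence the limit solves $(\mathcal{P})$ and $J(y_\alpha,v_\alpha)\to J(\tilde y,\tilde v)$. Finally, since $y_\alpha\to\tilde y$ in $L^{2}(\Omega)$ makes the state part of $J$ converge, the control part $\frac{\nu}{2}\norme{v_\alpha-v_d}^{2}$ converges too, so $\norme{v_\alpha-v_d}\to\norme{\tilde v-v_d}$; weak convergence plus norm convergence in the Hilbert space $L^{2}(\Omega)$ upgrade this to $v_\alpha\to\tilde v$ strongly.

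I expect the main obstacle to be the passage to the limit in the (relaxed) complementarity relation, both when proving feasibility of the limits and when recovering $\langle\tilde y,\tilde\xi\rangle=0$. This is precisely where the non-convexity of $\mathcal{D}$ rules out a soft weak-closedness argument, and where the strong $H_{0}^{1}$-convergence of the states supplied by Lemma 3.3 becomes indispensable, since it is exactly what lets the products $y_\alpha\xi_\alpha$ converge against the merely weakly convergent multipliers $\xi_\alpha$.
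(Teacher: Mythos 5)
Your proposal is correct and follows essentially the same route as the paper's proof: the direct method combined with Lemma 3.3 for existence, feasibility of $(\bar y,\bar v,\bar\xi)$ in every $\mathcal{D}_{\alpha}$ (via Lemma 3.2 and \eqref{eq1.12}) to get uniform bounds, the sandwich argument $J(\tilde y,\tilde v)\le \liminf_{\alpha\to 0}J(y_{\alpha},v_{\alpha})\le J(\bar y,\bar v)$, and the weak-convergence-plus-norm-convergence upgrade to strong $L^{2}$ convergence of $v_{\alpha}$, with the strong $H_{0}^{1}$ convergence of $y_{\alpha}$ coming from Lemma 3.3 (the paper re-derives it through the isomorphism property of $A$, but this is the same mechanism). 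The only point worth noting is that your passage to the limit in the relaxed complementarity constraint — rewriting it as $y\xi\le\alpha^{2}$ via Lemma 3.2 and testing against nonnegative $L^{\infty}$ functions, where strong $\times$ weak $L^{2}$ convergence applies — is a more careful justification than the paper's direct claim that the nonlinear quantities $\frac{y_{n}}{y_{n}+\alpha}+\frac{\xi_{n}}{\xi_{n}+\alpha}$ converge, so it is a refinement of, not a departure from, the paper's argument.
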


	 \begin{Proof}  
	    Let  $(y_{n},v_{n},\xi_{n}) $ be a minimizing sequence such that  $ J(y_{n},v_{n}) $ converges to $d^{\alpha}$ = inf$( \mathcal{P}^{\alpha} ) $.
	    As $ J(y_{n},v_{n}) $ is bounded, there exists a constant $ C $ such that we have :
	    \begin{center}
	     $ \forall n \quad  \mid\mid v_{n} \mid\mid_{L^{2}(\Omega)}\leq C $.
	    \end{center}
	    So, we may extract a subsequence (denoted similarly) such that $ v_{n} $ converges to $ v_{\alpha} $ weakly in $L^{2}(\Omega)$ and strongly in $H^{-1}(\Omega) $. As $ U_{ad} $ is a closed convex set, it is weakly closed and $v_{\alpha}  $  $\in U_{ad} $.\\
	    	\vspace{0.4cm}
	    On the other hand, we have $Ay_{n}+g(y_{n})-f-v_{n}=\xi_{n}  $ and. So 
	    \begin{center}
	    $ \langle\ Ay_{n},y_{n} \rangle+\langle\ g(y_{n}),y_{n} \rangle =\langle\ f+v_{n},y_{n} \rangle +\langle\ y_{n},\xi_{n} \rangle. $
	    \end{center}
	   In view of Lemma 3.2, we have :
	   \begin{center}
	    $ \forall y_{n} \geq 0, ~ \forall \xi_{n} \geq 0~~~~~~~ $ $  \frac{y_{n}}{y_{n}+\alpha}+\frac{\xi_{n}}{\xi_{n}+\alpha}\leq 1 \Longleftrightarrow  y_{n}\xi_{n}\leq \alpha^{2},$ 
	   \end{center}
	    the integral by the two ways, gives 
	    \begin{center}
	    $  \frac{y_{n}}{y_{n}+\alpha}+\frac{\xi_{n}}{\xi_{n}+\alpha}\leq 1 \Longleftrightarrow  y_{n}\xi_{n}\leq \alpha^{2} \Longrightarrow  \langle\ y_{n},\xi_{n} \rangle \leq \alpha^{2}Area(\Omega). $ 
	   \end{center}	    
	    So 
	    \begin{center}
	    $ \langle\ Ay_{n},y_{n} \rangle+\langle\ g(y_{n}),y_{n} \rangle =\langle\ f+v_{n},y_{n} \rangle +\langle\ y_{n},\xi_{n} \rangle \leq \langle\ f+v_{n},y_{n} \rangle + \alpha^{2}Area(\Omega) $.
	    \end{center}
	    The monotonicity of $ g $ gives 
	    \begin{center}
	   $ \langle\ Ay_{n},y_{n} \rangle\leq \langle\ Ay_{n},y_{n} \rangle +\langle\ g(y_{n})-g(0),y_{n} \rangle\leq \langle\ f+v_{n}-g(0),y_{n} \rangle + \alpha^{2}Area(\Omega) $.
	    \end{center}
	    Using the coercivity of $ A $, we obtain 
	    \begin{center}
	  $   \delta \mid\mid y_{n}\mid\mid_{H_{0}^{1}(\Omega)}^{2}\leq  \mid\mid f+v_{n}-g(0)\mid\mid_{H^{-1}(\Omega)}^{2}\mid\mid y_{n}\mid\mid_{H_{0}^{1}(\Omega)}+\alpha^{2}Area(\Omega)\leq C\mid\mid y_{n}\mid\mid_{H_{0}^{1}(\Omega)}+\alpha^{2}Area(\Omega).$
	    \end{center}
	  This yields that $ y_{n} $ is bounded in $ H_{0}^{1}(\Omega)$, since $ \Omega  $ is bounded, so $ y_{n} $ converges to $ y_{\alpha} $ weakly in $ H_{0}^{1}(\Omega)$ and strongly in $L^{2}(\Omega)$. Moreover as $ y_{n}  \in K $, and $ K $ is a closed convex set, $ K $ is weakly closed and $ y_{\alpha}\in K.$
	We have assumed that $ V_{ad} $ is $ L^{2}(\Omega)\text{-bounded}$. So, we can apply Lemma 3.3, and obtain that $\xi_{n}$ weakly converges to  $\xi_{\alpha}=Ay_{\alpha}+g(y_{\alpha})-f-v_{\alpha} \in V_{ad} $ in $L^{2}(\Omega)$. \\
	\begin{Remark} 
	$~~ \xi_{n}=Ay_{n}+g(y_{n})-f-v_{n},$ weakly converges to $ \xi_{\alpha}=Ay_{\alpha}+g(y_{\alpha})-f-v_{\alpha} $ in $ H^{-1}(\Omega) $, Unfortunately the weak convergence of $ \xi_{n}$ to $ \xi_{\alpha} $ in $ H^{-1}(\Omega) $ is not sufficient to conclude. We need this sequence to converge weakly  in $ L^{2}(\Omega)$. That is the reason why we have bounded $ \xi_{n}$ in $ L^{2 }(\Omega)$.
	      \end{Remark}
	         \vspace{0.1cm}      
	 At last, $  \frac{y_{n}}{y_{n}+\alpha}+\frac{\xi_{n}}{\xi_{n}+\alpha}$ converges to  $ \frac{y_{\alpha}}{y_{\alpha}+\alpha}+\frac{\xi_{\alpha}}{\xi_{\alpha}+\alpha} $  because of the strong convergence of $ y_{n} $ in $  L^{2}(\Omega)$  and the weak convergence of $ \xi_{n} $ in $  L^{2}(\Omega)$ and we obtain $ \frac{y_{\alpha}}{y_{\alpha}+\alpha}+\frac{\xi_{\alpha}}{\xi_{\alpha}+\alpha}\leq 1 $ : we just proved that $(y_{\alpha},v_{\alpha},\xi_{\alpha})\in \mathcal{D}_{\alpha}.$ 
	 The weak convergence and the lower semi-continuity of $ J $ give :
	 \begin{center}
	 $ d^{\alpha}=\displaystyle{\lim_{n \to \infty}}\text{inf} ~   J(y_{n},v_{n})\geq J(y_{\alpha},v_{\alpha})\geq d^{\alpha}.$ 
	 \end{center}
	    So $J(y_{\alpha},v_{\alpha})=d^{\alpha} $ and $(y_{\alpha},v_{\alpha},\xi_{\alpha})  $ is a solution of $ (\mathcal{P}^{\alpha}) $.
	\begin{itemize}
  \item Now, let us prove the second part of the theorem. First we note that  $ (\bar{y},\bar{v},\bar{\xi}) $ belongs to $ \mathcal{D}^{\alpha}  $ for any $ \alpha >0 $. So : 
     \begin{equation} \label{eq1.13}
	 \forall \alpha >0  ~~ ~~ J(y_{\alpha},v_{\alpha})\leq J(\bar{y},\bar{v}) < +\infty . 
	\end{equation}
\end{itemize}    
 and $ v_{\alpha} $ and $ y_{\alpha} $ are bounded respectively in  $ L^{2}(\Omega) $ and $ H^{1}_{0}(\Omega)$. Indeed, we use the previous arguments since $ v_{\alpha}$ is bounded in  $ L^{2}(\Omega) $ and  	 
 \begin{center}
	  $   \delta \mid\mid y_{\alpha}\mid\mid_{H_{0}^{1}(\Omega)}^{2} ~\leq ~ \mid\mid f+v_{\alpha}-g(0)\mid\mid_{H^{-1}(\Omega)}^{2}\mid\mid y_{\alpha}\mid\mid_{H_{0}^{1}(\Omega)}+\alpha^{2}Area(\Omega)\leq C\mid\mid y_{\alpha}\mid\mid_{H_{0}^{1}(\Omega)}+\alpha^{2}Area(\Omega).$
	    \end{center}  
	So (extracting a subsequence) $ v_{\alpha}$ weakly converges to some $ \tilde{v} $ in $ L^{2}(\Omega) $ and $ y_{\alpha}$ converges to some  $ \tilde{y} $ weakly in   $ H_{0}^{1}(\Omega) $ and strongly in  $ L^{2}(\Omega) $. As above, it is easy to see that $ \xi_{\alpha} $ weakly converges to $ \tilde{\xi}=A\tilde{y}+g(\tilde{y})-f-\tilde{v}  $ in $ L^{2}(\Omega) $ (Thanks Lemma 3.3), and that $ \tilde{y} \in K, ~\tilde{v} \in U_{ad}, ~ \tilde{\xi} \in V_{ad}$. 
	In the same way $ \frac{y_{\alpha}}{y_{\alpha}+\alpha}+\frac{\xi_{\alpha}}{\xi_{\alpha}+\alpha} $ converges to $ \frac{\tilde{y} }{\tilde{y} +\alpha}+\frac{\tilde{\xi} }{\tilde{\xi} +\alpha} $. As $ 0 \leq \frac{y_{\alpha}}{y_{\alpha}+\alpha}+\frac{\xi_{\alpha}}{\xi_{\alpha}+\alpha} \leq 1 $, from Lemma 3.2 we get : 
	\begin{center}
	$ 0 \leq ~~\frac{y_{\alpha}}{y_{\alpha}+\alpha}+\frac{\xi_{\alpha}}{\xi_{\alpha}+\alpha} ~~\leq 1 \Longleftrightarrow 0\leq  y_{\alpha}\xi_{\alpha} \leq \alpha^{2},$
	\end{center}
at the limit as $ \alpha \searrow $ 0 this implies that $ \tilde{y}\tilde{\xi} =0 \Longleftrightarrow \langle\ \tilde{y},\tilde{\xi}   \rangle =0.$ So ($\tilde{y},~\tilde{v},~\tilde{\xi})  \in \mathcal{D}$. This yields that      
	     \begin{equation} \label{eq1.14}
	    J(\bar{y},\bar{v}) \leq J(\tilde{y},\tilde{v}) .
	    \end{equation}
	    Once again, we may pass to the inf-limite in \eqref{eq1.13} to obtain :
	    \begin{center}
	 $ J(\tilde{y},\tilde{v}) $ $ \leq $ $ \displaystyle{\lim_{\alpha \to 0}} \text{inf} ~ J(y_{\alpha},v_{\alpha}) $ $ \leq J(\bar{y},\bar{v}). $
	    \end{center}
	This implies that 
	\begin{center}
	 $  J(\tilde{y},\tilde{v})= J(\bar{y},\bar{v}),  $
	    \end{center} 
	    therefore  $ (\tilde{y},\tilde{v},\tilde{\xi}) $ is a solution of ($ \mathcal{P}$). Moreover, as $ \displaystyle{\lim_{\alpha \to 0}}J(y_{\alpha},v_{\alpha}) = J(\tilde{y},\tilde{v}) $ and $ y_{\alpha} $ strongly converges to $ \tilde{y} $ in $ L^{2}(\Omega) $, we get $ \displaystyle{\lim_{\alpha \to 0}}\mid\mid v_{\alpha}\mid\mid_{L^{2}(\Omega)}= \mid\mid\tilde{v}\mid\mid_{L^{2}(\Omega)} $, so that $ v_{\alpha} $ strongly  converges to $ \tilde{v} $ in $ L^{2}(\Omega) $.\\
	    We  already know that $ \xi_{\alpha} $ weakly converges to $ \tilde{\xi} $ in $ L^{2}(\Omega). $ So $ \xi_{\alpha}+v_{\alpha}-g(y_{\alpha})+f=Ay_{\alpha} $ converges to $ \tilde{\xi}+\tilde{v}-g(\tilde{y})+f=A\tilde{y} $ weakly in $ L^{2}(\Omega) $ and strongly in $ H^{-1}(\Omega) $. As $ A $ is an isomorphism from $ H^{1}_{0}(\Omega) $ to $ H^{-1}(\Omega) $ this yields that $ y_{\alpha} $ strongly converges to $ \tilde{y} $ in $ H^{1}_{0}(\Omega).$   
	  \end{Proof}

	  We see then, that solutions of problem $( \mathcal{P}^{\alpha} )$ are "good" approximations of the desired solution of problem $ (\mathcal{P}) $. \\
	  Now, we would like to derive optimality conditions for the problem $ (\mathcal{P}^{\alpha})$, for $ \alpha > 0.$   
	
	In the squel, we study the unconstrained control case: $ U_{ad}=L^{2}(\Omega) $. We first present some Mathematical Programming tools that allow to prove the existence of Lagrange multipliers.
	

	\section{THE MATHEMATICAL PROGRAMMING POINT OF VIEW}

	  \vspace{0.3cm}
	   The non convexity of the feasible domain, does not allow to use convex analysis to get the existence of Lagrange multipliers. So we are going to use quite general mathematical programming methods in Banach spaces and adapt them to our framework. \\ The following results are mainly due to Zowe and Kurcyusz  \cite{Zowe}  and Troltzsch \cite{Troltzsch}  and we briefly present them in the following.\\
	   \vspace{0.2cm}
	  Let us consider real Banach spaces $  \mathcal{X}$, $  \mathcal{U}$, $  \mathcal{Z}_{1}$, $ \mathcal{Z}_{2} $ and a convex closed "admissible" set $\mathcal{U}_{ad} \subseteq \mathcal{U} $. In $ \mathcal{Z}_{2} $ a convex closed cone $P$ is given so that  $ \mathcal{Z}_{2} $ is partially ordered by $ x \leq y  \Leftrightarrow  x-y \in  P$. We deal also with :\\
	  \vspace{0.2cm}
	  
	  \begin{center}
	  $ f :\mathcal{X} \times \mathcal{U} \to \R $, Fréchet-differentiable functional, \\
	   $ T :\mathcal{X} \times \mathcal{U} \to \mathcal{Z}_{1} $ and $ G :\mathcal{X} \times \mathcal{U} \to \mathcal{Z}_{2} $ continuously Fréchet-differentiable operators. \\
	  \end{center}
	 
	  Now, consider the mathematical programming problem defined by :
	  \begin{equation} \label{eq1.15}
	     \text {min}~ \{f(x,u)~ \vert ~ T(x, u)=0,~ G(x, u) \leq 0, ~u \in \mathcal{U}_{ad}\}.  
	  \end{equation}
	  \newpage
	We denote the partial Fréchet-derivative of $ f, T, $ and $ G $ with respect to $ x $  and $ u  $ by a corresponding index $  x$ or $ u $. We suppose that the problem  \eqref{eq1.15} has an optimal solution that we call $ (x_{0}, u_{0}), $ and we  introduce the sets :
	\begin{center}
	\vspace{0.2cm}
$ \mathcal{U}_{ad}(u_{0})=\{u\in \mathcal{U}~ \vert~ \exists \lambda \geq 0,~ \exists u^{*}\in \mathcal{U}_{ad},~ u=\lambda(u^{*}-u_{0})\} $, \\
		\vspace{0.2cm}
$ P(G(x_{0},u_{0}))=\{z \in \mathcal{Z}_{2}~\vert ~\exists \lambda\geq 0,~ \exists p\in -P,~ z=p-\lambda G(x_{0},u_{0})\} $, \\
		\vspace{0.2cm}
$P^{+}=\{y\in \mathcal{Z}_{2}^{*}~\vert ~ \langle\ y,p \rangle \geq 0,~ \forall p\in P \}   $. \\
	\end{center}
		
One may now announce the main result about the existence of optimality conditions. \\

\begin{Theorem}

Let $ u_{0} $ be an optimal control with corresonding optimal state $ x_{0} $ and suppose that the following regularity condition is fulfilled : \\

 \begin{equation}\label{eq1.16}
 \forall(z_{1},z_{2})\in \mathcal{Z}_{1}\times \mathcal{Z}_{2}  ~~~ \text{the system}    ~~~~~~~~~~     \left\{
		\begin{aligned}
		T^{'}(x_{0},u_{0})(x,u)& &=~&z_{1} \\
		G^{'}(x_{0},u_{0})(x,u)&-p&=~&z_{2}\\
		\end{aligned}
		 \right.
\end{equation}

\begin{center}
is solvable with $ (x,u,p)\in \mathcal{X}\times \mathcal{U}_{ad}(u_{0})\times P(G(x_{0},u_{0})).   $ 
\end{center}

Then a Lagrange multiplier $ (y_{1},y_{2})  \in \mathcal{Z}_{1}^{*}\times \mathcal{Z}_{2}^{*}$ exists such that \\
\vspace{0.3cm}
\begin{equation}\label{eq1.17}
f_{x}^{'}(x_{0},u_{0})+T_{x}^{'}(x_{0},u_{0})*y_{1}+G_{x}^{'}(x_{0},u_{0})*y_{2}=0,  
\end{equation}

\begin{equation}\label{eq1.18}
 \langle\ f_{x}^{'}(x_{0},u_{0})+T_{x}^{'}(x_{0},u_{0})*y_{1}+G_{x}^{'}(x_{0},u_{0})*y_{2}, u-u_{0} \rangle \geq 0, ~~\forall u\in \mathcal{U}_{ad}, 
\end{equation}

\begin{equation}\label{eq1.19}
 y_{2}\in P^{+}, \quad \langle\ y_{2},G(x_{0},u_{0}) \rangle =0.
\end{equation}
 \end{Theorem}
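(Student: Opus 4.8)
The plan is to follow the classical separation argument of Zowe, Kurcyusz and Tröltzsch. First I would read the regularity condition \eqref{eq1.16} as a surjectivity statement: the linearized constraint map
\[
\Lambda(x,u,p)=\big(T'(x_{0},u_{0})(x,u),\; G'(x_{0},u_{0})(x,u)-p\big)
\]
sends $\mathcal{X}\times \mathcal{U}_{ad}(u_{0})\times P(G(x_{0},u_{0}))$ \emph{onto} $\mathcal{Z}_{1}\times \mathcal{Z}_{2}$. This is exactly the content of \eqref{eq1.16}, and it is the constraint qualification that will later rule out the degenerate Fritz--John case in which the multiplier of the objective vanishes.

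Next I would show that optimality of $(x_{0},u_{0})$ forbids any first-order feasible direction of strict descent; that is, the system $f'(x_{0},u_{0})(x,u)<0$, $T'(x_{0},u_{0})(x,u)=0$, $G'(x_{0},u_{0})(x,u)-p=0$ has no solution with $(x,u,p)\in \mathcal{X}\times \mathcal{U}_{ad}(u_{0})\times P(G(x_{0},u_{0}))$. The nontrivial implication here relies on a generalized Ljusternik / open-mapping theorem: the surjectivity just established allows one to correct an approximate linearized descent direction into a genuine admissible arc $(x(t),u(t))$ that is feasible for the full nonlinear constraints and satisfies $f(x(t),u(t))<f(x_{0},u_{0})$ for small $t>0$, contradicting optimality. \textbf{This is the step I expect to be the main obstacle}, since it is precisely where the linear surjectivity of $\Lambda$ must be transferred to the nonlinear problem, and where the constraint qualification earns its keep.

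With no descent direction available, I would apply the Hahn--Banach separation theorem in $\R\times \mathcal{Z}_{1}\times \mathcal{Z}_{2}$. The image of the augmented linearized map misses the open convex set corresponding to strict objective decrease together with the admissible constraint cone, so there exists a nonzero functional $(\lambda_{0},y_{1},y_{2})\in \R\times \mathcal{Z}_{1}^{*}\times \mathcal{Z}_{2}^{*}$ separating the two. The surjectivity coming from \eqref{eq1.16} forces $\lambda_{0}\neq 0$: otherwise $(y_{1},y_{2})$ would annihilate the whole range of $\Lambda$ and hence vanish, contradicting nontriviality. After normalizing $\lambda_{0}=1$ I would take $(y_{1},y_{2})$ as the sought multiplier.

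Finally I would translate the separation inequality into \eqref{eq1.17}--\eqref{eq1.19} by testing it against each family of generators. Letting $x$ range freely over $\mathcal{X}$ with $u=0$ and $p=0$ yields the stationarity equation \eqref{eq1.17}, the equality arising because $x$ varies over an entire linear space; letting $u$ range over the admissible direction cone $\mathcal{U}_{ad}(u_{0})$ gives the variational inequality \eqref{eq1.18}; and letting $p$ range over $P(G(x_{0},u_{0}))$ produces both $y_{2}\in P^{+}$ and the complementarity relation $\langle y_{2},G(x_{0},u_{0})\rangle=0$, which is \eqref{eq1.19}. The sign and conical structure of $P$ and $P(G(x_{0},u_{0}))$ make each of these readings routine once the separating functional is in hand.
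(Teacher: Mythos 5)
You cannot be matched against the paper's own argument here, because the paper does not give one: Theorem 4.1 is stated as a quoted tool, attributed to Zowe--Kurcyusz \cite{Zowe} and Tr\"oltzsch \cite{Troltzsch} (``we briefly present them in the following''), and the authors move straight on to applying it to the penalized problem. Judged against the classical proofs in those references, your outline reconstructs the architecture correctly: condition \eqref{eq1.16} is indeed the surjectivity of the linearized constraint map on the conical sets $\mathcal{X}\times\mathcal{U}_{ad}(u_{0})\times P(G(x_{0},u_{0}))$; a generalized Ljusternik/open-mapping theorem is what converts that surjectivity into the statement that optimality excludes linearized descent directions; and in the dual step surjectivity is exactly what excludes the degenerate Fritz--John case $\lambda_{0}=0$, since a functional of one sign on all of $\mathcal{Z}_{1}\times\mathcal{Z}_{2}$ must vanish. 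Your reading of \eqref{eq1.17}--\eqref{eq1.19} from the final functional is also the standard one (incidentally, \eqref{eq1.18} as printed should involve the $u$-derivatives $f_{u}'$, $T_{u}'$, $G_{u}'$ rather than the $x$-derivatives; your interpretation silently corrects this type mismatch).

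The one step that would stall as literally written is your Hahn--Banach separation in $\R\times\mathcal{Z}_{1}\times\mathcal{Z}_{2}$. Geometric Hahn--Banach requires one of the two convex sets to have nonempty interior, and the ``strict descent'' set $\{(r,0,0)\,:\,r<0\}$ has empty interior whenever $\mathcal{Z}_{1}\times\mathcal{Z}_{2}\neq\{0\}$; the augmented image cone, for its part, only acquires interior points after one knows \emph{quantitative} solvability of the linearized system, i.e.\ solutions $(x,u,p)$ with norms controlled by $\|z_{1}\|+\|z_{2}\|$. That quantitative bound is precisely what the generalized open mapping theorem extracts from \eqref{eq1.16}, and it is needed a second time, inside the duality step --- either to give the image cone nonempty interior so that separation applies, or, as in Zowe--Kurcyusz, to show that the sublinear value function $q(z)=\inf\{f'(x_{0},u_{0})(x,u)\}$ over linearized-feasible directions is finite and continuous, after which the multiplier comes from Hahn--Banach domination rather than set separation. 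Your sketch spends the constraint qualification entirely on the tangent-cone step and then treats the separation as routine; the actual proof must invoke it twice, and without the second use the separating functional you rely on need not exist.
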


	  Mathematical programming theory in Banach spaces allows to study problems where the feasible domain is not convex: this precisely our case (and we cannot use the classical convex theory and the Gâteaux differentiability to derive some optimality conditions). The Zowe and Kurcyusz condition  \cite{Zowe}  is a very weak condition to ensure the existence of Lagrange multipliers. It is natural to try to see if this condition is satisfied for the original problem $ (\mathcal{P}) $ : unfortunately, it is impossible (see  \cite{Bergounioux-1995}) and this is another justification (from a theoretical point of view) of the fact that we have to take $ \mathcal{D}_{\alpha} $ instead of $ \mathcal{D}$. \\
	  On the other hand, if we apply the previous general result "directly" to $ (\mathcal{P}^{\alpha})$ we obtain a complicated qualification condition \eqref{eq1.16} which seems difficult to ensure. So we would rather mix these "mathematical-programming methods" with a penalization method in order to "relax" the state-equation as well and make the qualification condition weaker and simpler.\\
	  
	  \section{PENALIZATIN APPROACH}
	 
	   \subsection{The penalized problem}
	 One of the difficulties comes from the fact that we have a coupled system. It would be easier if we had only one condition. In order to split the different constraints and make them "independent", we penalize the state equation to obtain an optimization problem with non convex constraints. Then we apply previous method to get  optimality conditions for the penalized problem. Of course, we may decide to penalize the bilinear constraint instead of the state equation : this leads to the same results.\\
	 Moreover we focus on the solution $ (y_{\alpha},v_{\alpha},\xi_{\alpha})$, so, following Barbu   \cite{Barbu}, we add some adapted penalization terms to the objective functional $ J $. \\
	 \newpage
	 From nowon,  $\alpha>0$ is fixed , so we omit the index $ \alpha  $ when no confusion is possible. For any $ \varepsilon >0$ we define a penalized functional $J_{\varepsilon}^{\alpha} $ on $ (H^{2}(\Omega)\cap H_{0}^{1}(\Omega)) \times L^{2}(\Omega)\times L^{2}(\Omega))  $ as following :\\

	  \begin{equation} \label{eq1.20}
		J_{\varepsilon}^{\alpha}(y,v,\xi) = \left\{  
\begin{split}
J(y,v) & + \frac{1}{2\varepsilon}\mid\mid Ay+g(y)-f-v-\xi \mid\mid_{L^{2}(\Omega)}^{2}  \\
 & + \frac{1}{2}\mid\mid A(y-y_{\alpha}) \mid\mid_{L^{2}(\Omega)}^{2}+\frac{1}{2}\mid\mid v-v_{\alpha} \mid\mid_{L^{2}(\Omega)}^{2} \\
 & + \frac{1}{2}\mid\mid \xi-\xi_{\alpha} \mid\mid_{L^{2}(\Omega)}^{2}
\end{split}
 \right.
\end{equation}

		and we consider the penalized optimization problem 
		\begin{center}
		$ ~~~~~~~~ \text{min} ~ \{ J_{\varepsilon}^{\alpha}(y,v,\xi)~\vert ~(y,v,\xi)\in \mathcal{D}_{\alpha},~ y\in H^{2}(\Omega)\cap H_{0}^{1}(\Omega)\} ~~~~~~~~~~~~~~~~~~ ~~~~~~~~~~  $\hspace{20mm} $(\mathcal{P}^{\varepsilon}_{\alpha})  $   
		\end{center}
		
	  \begin{Theorem}
	  The penalized problem $ (\mathcal{P}^{\alpha}_{\varepsilon}) $ has at least a solution $ (y_{\varepsilon},v_{\varepsilon},\xi_{\varepsilon})\in  (H^{2}(\Omega)\cap H_{0}^{1}(\Omega)) \times L^{2}(\Omega)\times L^{2}(\Omega) .$ 
	  
	   \end{Theorem}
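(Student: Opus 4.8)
The plan is to apply the direct method of the calculus of variations on the feasible set $\mathcal{D}_{\alpha}\cap\big((H^{2}(\Omega)\cap H_{0}^{1}(\Omega))\times L^{2}(\Omega)\times L^{2}(\Omega)\big)$. First I would observe that this set is nonempty, since $(y_{\alpha},v_{\alpha},\xi_{\alpha})$ belongs to it ($y_{\alpha}\in H^{2}\cap H_{0}^{1}$ by the regularity result recalled in Section 2), and that $J_{\varepsilon}^{\alpha}$ is bounded below by $0$ because every term in \eqref{eq1.20} is nonnegative. Evaluating $J_{\varepsilon}^{\alpha}$ at $(y_{\alpha},v_{\alpha},\xi_{\alpha})$, where the state equation holds and the three tracking penalties vanish, shows that $\inf(\mathcal{P}_{\varepsilon}^{\alpha})\le J(y_{\alpha},v_{\alpha})<+\infty$. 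Hence the infimum $d_{\varepsilon}^{\alpha}$ is finite, and I may pick a minimizing sequence $(y_{n},v_{n},\xi_{n})$ in the feasible set.

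Next I would extract coercivity from the penalization terms of $J_{\varepsilon}^{\alpha}$. Since $J_{\varepsilon}^{\alpha}(y_{n},v_{n},\xi_{n})$ is bounded, the terms $\tfrac12\|v_{n}-v_{\alpha}\|_{L^{2}(\Omega)}^{2}$, $\tfrac12\|\xi_{n}-\xi_{\alpha}\|_{L^{2}(\Omega)}^{2}$ and $\tfrac12\|A(y_{n}-y_{\alpha})\|_{L^{2}(\Omega)}^{2}$ are bounded, so $v_{n}$ and $\xi_{n}$ are bounded in $L^{2}(\Omega)$ and $Ay_{n}$ is bounded in $L^{2}(\Omega)$. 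Using the fact that $A$ is an isomorphism from $H^{2}(\Omega)\cap H_{0}^{1}(\Omega)$ onto $L^{2}(\Omega)$ (elliptic regularity, valid under the assumptions on the $a_{ij}$ and on $\partial\Omega$), $y_{n}$ is bounded in $H^{2}(\Omega)\cap H_{0}^{1}(\Omega)$. Passing to a subsequence I obtain $y_{n}\rightharpoonup y_{\varepsilon}$ weakly in $H^{2}\cap H_{0}^{1}$ (hence $Ay_{n}\rightharpoonup Ay_{\varepsilon}$ weakly in $L^{2}$ and, by the compact embedding, $y_{n}\to y_{\varepsilon}$ strongly in $H_{0}^{1}$ and in $L^{2}$), together with $v_{n}\rightharpoonup v_{\varepsilon}$ and $\xi_{n}\rightharpoonup\xi_{\varepsilon}$ weakly in $L^{2}$. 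By the growth condition \eqref{eq1.6} and the arguments of Lemma 3.3, $g(y_{n})\to g(y_{\varepsilon})$ strongly in $L^{2}(\Omega)$.

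Then I would check that the limit stays in $\mathcal{D}_{\alpha}$. The constraints $y\ge 0$, $\xi\ge 0$, $v\in U_{ad}$ and $\|\xi\|_{L^{2}(\Omega)}\le R$ define closed convex, hence weakly closed, sets, so $y_{\varepsilon}\ge 0$, $\xi_{\varepsilon}\ge 0$, $v_{\varepsilon}\in U_{ad}$ and $\|\xi_{\varepsilon}\|_{L^{2}(\Omega)}\le R$. The delicate point is the nonconvex relaxed complementarity constraint. Here I would invoke Lemma 3.2 to replace it by the equivalent pointwise inequality $y_{n}\xi_{n}\le\alpha^{2}$ a.e.\ in $\Omega$. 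Since $y_{n}\to y_{\varepsilon}$ strongly and $\xi_{n}\rightharpoonup\xi_{\varepsilon}$ weakly in $L^{2}(\Omega)$, the product $y_{n}\xi_{n}$ converges to $y_{\varepsilon}\xi_{\varepsilon}$ weakly in $L^{1}(\Omega)$; testing the inequality against an arbitrary nonnegative $\phi\in L^{\infty}(\Omega)$ and passing to the limit yields $y_{\varepsilon}\xi_{\varepsilon}\le\alpha^{2}$ a.e., that is, $(y_{\varepsilon},v_{\varepsilon},\xi_{\varepsilon})\in\mathcal{D}_{\alpha}$.

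Finally I would establish the weak lower semicontinuity of $J_{\varepsilon}^{\alpha}$ along this subsequence. The tracking part $J$ is convex and continuous on $L^{2}\times L^{2}$, hence weakly lower semicontinuous (indeed its $y$-term converges outright by strong $L^{2}$ convergence); each quadratic penalty is the square of an $L^{2}$-norm of a quantity that converges weakly in $L^{2}$ — namely $A(y_{n}-y_{\alpha})$, $v_{n}-v_{\alpha}$, $\xi_{n}-\xi_{\alpha}$, and, using also $g(y_{n})\to g(y_{\varepsilon})$ strongly, $Ay_{n}+g(y_{n})-f-v_{n}-\xi_{n}$ — so each term is weakly lower semicontinuous by lower semicontinuity of the norm. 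Summing, $J_{\varepsilon}^{\alpha}(y_{\varepsilon},v_{\varepsilon},\xi_{\varepsilon})\le\liminf_{n}J_{\varepsilon}^{\alpha}(y_{n},v_{n},\xi_{n})=d_{\varepsilon}^{\alpha}$, which forces equality and shows that $(y_{\varepsilon},v_{\varepsilon},\xi_{\varepsilon})$ is a solution. I expect the main obstacle to be twofold: securing the $H^{2}$-bound on $y_{n}$, which is exactly what the penalty $\tfrac12\|A(y-y_{\alpha})\|_{L^{2}(\Omega)}^{2}$ is designed to provide through elliptic regularity, and passing to the limit in the nonconvex relaxed constraint, where the weak convergence of $\xi_{n}$ alone is insufficient and the strong $L^{2}$ convergence of $y_{n}$ combined with Lemma 3.2 is essential.
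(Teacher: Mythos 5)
Your proposal is correct and takes essentially the same route as the paper: the paper's own proof simply states that the argument is ``almost the same as the one of Theorem 3.1'' --- i.e.\ the direct method, with the quadratic penalty terms supplying the $L^{2}(\Omega)$ bounds on $v_{n}$, $\xi_{n}$, $Ay_{n}$ (hence the $H^{2}\cap H_{0}^{1}$ bound on $y_{n}$), Lemma 3.3 giving the convergences, Lemma 3.2 handling the relaxed complementarity constraint, and weak lower semicontinuity of each quadratic term finishing the proof --- which is exactly what you carry out in detail. If anything, your passage to the limit in the nonconvex constraint (rewriting it as $y_{n}\xi_{n}\le\alpha^{2}$ a.e.\ and testing against nonnegative $L^{\infty}$ functions, pairing the strong convergence of $y_{n}$ with the weak convergence of $\xi_{n}$) is more careful than the paper's corresponding step, which asserts convergence of the nonlinear expression $\frac{\xi_{n}}{\xi_{n}+\alpha}$ directly from weak convergence of $\xi_{n}$.
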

	  
	  \begin{Proof}
	  The proof is almost the same as the one of Theoreme 3.1. The main difference is that we have no longer  $Ay_{n}+g(y_{n})-f-v_{n}-\xi_{n}=0 $, for any minimizing sequence.\\
	  Anyway, $y_{n},~ v_{n}, ~\xi_{n},~ Ay_{n}  $ and $g(y_{n})  $ are bounded in $L^{2}(\Omega)$, and it is standard to see that any weak-cluster point of this minimizing sequence is feasible and is a solution to the problem,
	  \begin{center}
	  $  Ay_{n}+g(y_{n})-f-v_{n}-\xi_{n} $ $ \rightharpoonup 0, $ weakly in $ L^{2}(\Omega) $.
	  \end{center}
	  
	  \end{Proof}
	  Now we may also give a result concerning the asymptotic behavior of the solutions of the penalized problems.

	  \begin{Theorem}
	 
	  When $   \varepsilon$ goes to $ 0 $, $ (y_{\varepsilon},v_{\varepsilon},\xi_{\varepsilon}) $ strongly convergs to $ (y_{\alpha},v_{\alpha},\xi_{\alpha})\in  (H^{2}(\Omega)\cap H_{0}^{1}(\Omega)) \times L^{2}(\Omega)\times L^{2}(\Omega) .$
	  
	  \end{Theorem}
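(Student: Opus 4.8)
The plan is a standard penalization-and-compactness argument, in which the three adapted penalization terms are what pin the limit to the prescribed solution $(y_{\alpha},v_{\alpha},\xi_{\alpha})$ rather than to an arbitrary minimizer of $(\mathcal{P}^{\alpha})$. First I would derive uniform estimates from a single comparison. Since $(y_{\alpha},v_{\alpha},\xi_{\alpha})$ solves $(\mathcal{P}^{\alpha})$ it lies in $\mathcal{D}_{\alpha}$ and, by the $H^{2}$-regularity recalled in Section 2, belongs to $(H^{2}(\Omega)\cap H_{0}^{1}(\Omega))\times L^{2}(\Omega)\times L^{2}(\Omega)$; hence it is admissible for $(\mathcal{P}^{\alpha}_{\varepsilon})$. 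It satisfies the state equation exactly, so the residual and the three adapted terms all vanish at this point, and minimality of $(y_{\varepsilon},v_{\varepsilon},\xi_{\varepsilon})$ yields
\[
J_{\varepsilon}^{\alpha}(y_{\varepsilon},v_{\varepsilon},\xi_{\varepsilon})\leq J_{\varepsilon}^{\alpha}(y_{\alpha},v_{\alpha},\xi_{\alpha})=J(y_{\alpha},v_{\alpha}).
\]
As every term of $J_{\varepsilon}^{\alpha}$ is nonnegative, this inequality alone bounds $J(y_{\varepsilon},v_{\varepsilon})$, $\mid\mid A(y_{\varepsilon}-y_{\alpha})\mid\mid_{L^{2}(\Omega)}$, $\mid\mid v_{\varepsilon}-v_{\alpha}\mid\mid_{L^{2}(\Omega)}$ and $\mid\mid \xi_{\varepsilon}-\xi_{\alpha}\mid\mid_{L^{2}(\Omega)}$ independently of $\varepsilon$, and forces the residual to obey $\mid\mid Ay_{\varepsilon}+g(y_{\varepsilon})-f-v_{\varepsilon}-\xi_{\varepsilon}\mid\mid_{L^{2}(\Omega)}^{2}\leq 2\varepsilon\,J(y_{\alpha},v_{\alpha})\to 0$. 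In particular $v_{\varepsilon},\xi_{\varepsilon}$ are bounded in $L^{2}(\Omega)$ and, $A$ being an isomorphism from $H^{2}(\Omega)\cap H_{0}^{1}(\Omega)$ onto $L^{2}(\Omega)$, $y_{\varepsilon}$ is bounded in $H^{2}(\Omega)\cap H_{0}^{1}(\Omega)$.

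Next I would pass to the limit. Extracting subsequences, $y_{\varepsilon}\rightharpoonup \hat{y}$ weakly in $H^{2}(\Omega)\cap H_{0}^{1}(\Omega)$ (hence strongly in $L^{2}(\Omega)$ and a.e. in $\Omega$), $v_{\varepsilon}\rightharpoonup\hat{v}$ and $\xi_{\varepsilon}\rightharpoonup\hat{\xi}$ weakly in $L^{2}(\Omega)$. The constraints $y\geq 0$ and $\xi\in V_{ad}$ survive by weak closedness of these convex sets, and the relaxed complementarity inequality passes to the limit exactly as in the proof of Theorem 3.1, using the strong $L^{2}$-convergence of $y_{\varepsilon}$ together with the weak $L^{2}$-convergence of $\xi_{\varepsilon}$. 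By Lemma 3.3 the quantity $Ay_{\varepsilon}+g(y_{\varepsilon})-f-v_{\varepsilon}$ converges weakly in $L^{2}(\Omega)$ to $A\hat{y}+g(\hat{y})-f-\hat{v}$; since the residual tends to $0$ in $L^{2}(\Omega)$, we get $A\hat{y}+g(\hat{y})-f-\hat{v}-\hat{\xi}=0$, so $(\hat{y},\hat{v},\hat{\xi})$ is feasible for $(\mathcal{P}^{\alpha})$. I expect this passage in the constraints to be the main obstacle: keeping the relaxed complementarity and the penalized state equation simultaneously consistent at the limit is exactly what the strong/weak $L^{2}$-compactness of Lemma 3.3 is there to provide.

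To identify the limit, set $\Phi(y,v,\xi):=J(y,v)+\tfrac12\mid\mid A(y-y_{\alpha})\mid\mid_{L^{2}(\Omega)}^{2}+\tfrac12\mid\mid v-v_{\alpha}\mid\mid_{L^{2}(\Omega)}^{2}+\tfrac12\mid\mid \xi-\xi_{\alpha}\mid\mid_{L^{2}(\Omega)}^{2}$, a convex continuous, hence weakly lower semicontinuous, functional. Dropping the nonnegative residual term gives $\Phi(y_{\varepsilon},v_{\varepsilon},\xi_{\varepsilon})\leq J(y_{\alpha},v_{\alpha})$, and passing to the lower limit yields
\[
J(\hat{y},\hat{v})+\tfrac12\mid\mid A(\hat{y}-y_{\alpha})\mid\mid_{L^{2}(\Omega)}^{2}+\tfrac12\mid\mid \hat{v}-v_{\alpha}\mid\mid_{L^{2}(\Omega)}^{2}+\tfrac12\mid\mid \hat{\xi}-\xi_{\alpha}\mid\mid_{L^{2}(\Omega)}^{2}\leq J(y_{\alpha},v_{\alpha}).
\]
Since $(\hat{y},\hat{v},\hat{\xi})$ is feasible and $(y_{\alpha},v_{\alpha},\xi_{\alpha})$ is optimal for $(\mathcal{P}^{\alpha})$, one has $J(y_{\alpha},v_{\alpha})\leq J(\hat{y},\hat{v})$; substituting this forces the three remaining squared terms to vanish, whence $\hat{v}=v_{\alpha}$, $\hat{\xi}=\xi_{\alpha}$ and $\hat{y}=y_{\alpha}$ by injectivity of $A$. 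As the limit is independent of the chosen subsequence, the whole family converges to $(y_{\alpha},v_{\alpha},\xi_{\alpha})$.

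Finally I would upgrade weak to strong convergence. The two inequalities above combine into $\Phi(y_{\varepsilon},v_{\varepsilon},\xi_{\varepsilon})\to\Phi(y_{\alpha},v_{\alpha},\xi_{\alpha})=J(y_{\alpha},v_{\alpha})$; each summand of $\Phi$ is weakly lower semicontinuous with lower limit at least its value at the limit, while the total converges to the sum of these values, so by the elementary superadditivity argument each summand converges separately to its limit. This gives $\mid\mid v_{\varepsilon}-v_{\alpha}\mid\mid_{L^{2}(\Omega)}\to 0$, $\mid\mid \xi_{\varepsilon}-\xi_{\alpha}\mid\mid_{L^{2}(\Omega)}\to 0$ and $\mid\mid A(y_{\varepsilon}-y_{\alpha})\mid\mid_{L^{2}(\Omega)}\to 0$; the last one together with the isomorphism property of $A$ yields $\mid\mid y_{\varepsilon}-y_{\alpha}\mid\mid_{H^{2}(\Omega)\cap H_{0}^{1}(\Omega)}\to 0$, which is precisely the asserted strong convergence.
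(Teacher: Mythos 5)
Your proposal is correct and follows essentially the same route as the paper's proof: the same comparison inequality $J_{\varepsilon}^{\alpha}(y_{\varepsilon},v_{\varepsilon},\xi_{\varepsilon})\leq J(y_{\alpha},v_{\alpha})$, the same uniform bounds and use of Lemma 3.3 to pass to the limit in the state equation and the relaxed complementarity constraint, the same exploitation of the adapted penalization terms together with optimality of $(y_{\alpha},v_{\alpha},\xi_{\alpha})$ to identify the limit, and the same lower-semicontinuity argument to upgrade weak to strong convergence. Your packaging of the last step via the functional $\Phi$ and the separate-convergence-of-summands argument is just a tidier formulation of the chain of inequalities the paper writes out directly.
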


	   \begin{Proof}
	  
	  The proof is quite similar to the one of Theoreme 3.1. We have :
		  \begin{equation} \label{eq1.21}
	  \forall \varepsilon > 0     ~~~~~~~~  J_{\varepsilon}^{\alpha}(y_{\varepsilon},v_{\varepsilon},\xi_{\varepsilon}) \leq J_{\varepsilon}^{\alpha}(y_{\alpha},v_{\alpha},\xi_{\alpha})= J(y_{\alpha},v_{\alpha})=j_{\alpha} <+\infty .
	    \end{equation}
	  
	  So 
	  \begin{center}
	 $  \frac{1}{\varepsilon}\mid\mid Ay_{\varepsilon}+g(y_{\varepsilon})-f-v_{\varepsilon}-\xi_{\varepsilon} \mid\mid_{L^{2}(\Omega)}^{2}$ +$ \mid\mid A(y_{\varepsilon}-y_{\alpha}) \mid\mid_{L^{2}(\Omega)}^{2}  $+$ \mid\mid v_{\varepsilon}-v_{\alpha} \mid\mid_{L^{2}(\Omega)}^{2}  $+$ \mid\mid \xi_{\varepsilon}-\xi_{\alpha} \mid\mid_{L^{2}(\Omega)}^{2}  $ $ \leq 2j_{\alpha}$.
	  \end{center}
	 
	Therefore $ v_{\varepsilon}, Ay_{\varepsilon} $ and $ \xi_{\varepsilon} $ are $ L^{2}(\Omega)-\text{bounded} $; this yields that $ Ay_{\varepsilon}+g(y_{\varepsilon})-f-v_{\varepsilon} $  is $ L^{2}(\Omega)\text{-bounded} $ and $y_{\varepsilon}$   is $H^{2}(\Omega)\cap H_{0}^{1}(\Omega)\text{-bounded}$. So, using Lemma 3.3, we conclude that 	
	\begin{center}
	\begin{enumerate}[label=(\roman*)]
	 \centering
	 \item$ v_{\varepsilon} $ converges to some $ \tilde{v}$ weakly in $ L^{2}(\Omega) $,\\
	
	 \item$ y_{\varepsilon} $ converges to some $ \tilde{y}$ strongly in $ H_{0}^{1}(\Omega) $,\\
	
	 \item$ \xi_{\varepsilon} $ converges to some $ \tilde{\xi}$ weakly in $ L^{2}(\Omega) $, and  \\

	  \item$Ay_{\varepsilon}+g(y_{\varepsilon})-f-v_{\varepsilon}-\xi_{\varepsilon} $ converges to $ A\tilde{y}+g(\tilde{y})-f-\tilde{v}-\tilde{\xi}   $ weakly in $L^{2}(\Omega)$. \
	 \end{enumerate}
	 \end{center}
	 
	 Moreover,  $ \mid\mid Ay_{\varepsilon}+g(y_{\varepsilon})-f-v_{\varepsilon}-\xi_{\varepsilon} \mid\mid_{L^{2}(\Omega)}^{2} ~ \leq  2\varepsilon j_{\alpha}   $ implies the strong convergence of $Ay_{\varepsilon}+g(y_{\varepsilon})-f-v_{\varepsilon}-\xi_{\varepsilon}$ to $ 0 $ in $ L^{2}(\Omega) $. Therefore  $ A\tilde{y}+g(\tilde{y})=f+\tilde{v}+\tilde{\xi}$. \\
	 It is easy to see that $\tilde{y} \in K $,   $\tilde{v} \in U_{ad}   $ and    $\tilde{\xi} \in V_{ad} $. Moreover, as  $ y_{\varepsilon} $ converges to  $\tilde{y} $ strongly in $ L^{2}(\Omega) $ and $ \xi_{\varepsilon} $ converges to $  \tilde{\xi}$ weakly in $ L^{2}(\Omega) $, we know that $ \frac{y_{\varepsilon}}{y_{\varepsilon}+\alpha}+\frac{\xi_{\varepsilon}}{\xi_{\varepsilon}+\alpha}(\leq 1) $ converges to $ \frac{\tilde{y}}{\tilde{y}+\alpha}+\frac{\tilde{\xi}}{\tilde{\xi}+\alpha} $. So $ \frac{\tilde{y}}{\tilde{y}+\alpha}+\frac{\tilde{\xi}}{\tilde{\xi}+\alpha} \leq 1 $ and $ (\tilde{y},\tilde{v},\tilde{\xi})$ belongs to $ \mathcal{D}_{\alpha} $.\\
	 Relation \eqref{eq1.21} implies that 
	 \begin{equation} \label{eq1.22}
	  J(y_{\varepsilon},v_{\varepsilon})+ \frac{1}{2}\mid\mid A(y_{\varepsilon}-y_{\alpha}) \mid\mid_{L^{2}(\Omega)}^{2}+ \frac{1}{2} \mid\mid v_{\varepsilon}-v_{\alpha} \mid\mid_{L^{2}(\Omega)}^{2}  + \frac{1}{2}\mid\mid \xi_{\varepsilon}-\xi_{\alpha} \mid\mid_{L^{2}(\Omega)}^{2}    \leq  J(y_{\alpha},v_{\alpha}).
	 \end{equation}
	 \newpage
	 Passing to the inf-limit and using the fact that $ (\tilde{y},\tilde{v},\tilde{\xi})$ belongs to $ \mathcal{D}_{\alpha}$, we obtain  
	 \begin{center}
$  J(\tilde{y},\tilde{v})+ \frac{1}{2}\mid\mid A(\tilde{y}-y_{\alpha}) \mid\mid_{L^{2}(\Omega)}^{2}+ \frac{1}{2} \mid\mid \tilde{v}-v_{\alpha} \mid\mid_{L^{2}(\Omega)}^{2}  + \frac{1}{2}\mid\mid \tilde{\xi}-\xi_{\alpha} \mid\mid_{L^{2}(\Omega)}^{2}    \leq J(y_{\alpha},v_{\alpha}) \leq J(\tilde{y},\tilde{v}).$
	 \end{center}
	  Therefore $ A(\tilde{y}-y_{\alpha})=0 $ (which implies $ \tilde{y}=y_{\alpha}$ since $ A(\tilde{y}-y_{\alpha}) \in H_{0}^{1}(\Omega))$,  $\tilde{v}=v_{\alpha}  $ and  $ \tilde{\xi}=\xi_{\alpha} $. \\
	  We just proved the weak convergence of $ (y_{\varepsilon},v_{\varepsilon},\xi_{\varepsilon}) $ to $ (y_{\alpha},v_{\alpha},\xi_{\alpha})$ in  $ H_{0}^{1}(\Omega) \times L^{2}(\Omega)\times L^{2}(\Omega), $ \\ and that $ \displaystyle{\lim_{\varepsilon \to 0}}J(y_{\varepsilon},v_{\varepsilon})=J(y_{\alpha},v_{\alpha})$. Relation \eqref{eq1.22} gives 
	  
	  \begin{center}
	    $ \mid\mid A(y_{\varepsilon}-y_{\alpha}) \mid\mid_{L^{2}(\Omega)}^{2}+  \mid\mid v_{\varepsilon}-v_{\alpha} \mid\mid_{L^{2}(\Omega)}^{2}  + \mid\mid \xi_{\varepsilon}-\xi_{\alpha} \mid\mid_{L^{2}(\Omega)}^{2} ~ \leq 2[J(y_{\alpha},v_{\alpha})-J(y_{\varepsilon},v_{\varepsilon})]; $   
	   \end{center}

	 therefore we get the strong convergence of  $ Ay_{\varepsilon} $ towards $ Ay_{\alpha} $ in $ L^{2}(\Omega) $, that is the strong convergence of $y_{\varepsilon}  $ to $ y_{\alpha} $ in $ H^{2}(\Omega) \cap H_{0}^{1}(\Omega) $. We get also the strong convergence of $ (v_{\varepsilon},\xi_{\varepsilon}) $ towards $  (v_{\alpha},\xi_{\alpha})$  in $ L^{2}(\Omega)\times L^{2}(\Omega) $. Let us remark, at last, that $ y_{\varepsilon} $ converges to $ y_{\alpha} $ uniformly in $\bar{\Omega}$, since $  H^{2}(\Omega)\cap H_{0}^{1}(\Omega)\subset {\mathcal{C}}(\bar{\Omega}).$ 
	 
	\end{Proof}
	
	\begin{Corollary}
	If we define the penalized adjoint state $ p_{\varepsilon} $ as the solution of 
	\begin{equation} \label{eq1.23}
	A^{*}p_{\varepsilon}+g^{'}(y_{\varepsilon})p_{\varepsilon}=y_{\varepsilon}-z_{d} ~~ \text{on}~~  \Omega ,  ~p_{\varepsilon}\in H^{1}_{0}(\Omega) ,
	\end{equation}
	then $ p_{\varepsilon}$ strongly converges to $  p_{\alpha}$ in $ H^{1}_{0}(\Omega) $ , where $ p_{\alpha} $ is defined by 
	\begin{equation} \label{eq1.24}
A^{*}p_{\alpha}+g^{'}(y_{\alpha})p_{\alpha}=y_{\alpha}-z_{d} ~~ \text{on} ~~  \Omega , ~ p_{\alpha}\in H^{1}_{0}(\Omega) .
	\end{equation}
		\end{Corollary}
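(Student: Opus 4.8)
The plan is to derive a single elliptic equation for the difference $w_\varepsilon := p_\varepsilon - p_\alpha$ and to close a coercivity estimate on it, exploiting the strong convergence $(y_\varepsilon,v_\varepsilon,\xi_\varepsilon)\to(y_\alpha,v_\alpha,\xi_\alpha)$ already furnished by the preceding theorem. First I would check that both \eqref{eq1.23} and \eqref{eq1.24} are well posed: the bilinear form associated with $A^{*}$ is $a^{*}(u,z)=a(z,u)$, which inherits the continuity \eqref{eq1.4} and the coercivity \eqref{eq1.5}; adding the zeroth-order term $\int_{\Omega}g^{'}(y_\varepsilon)uz\,dx$ (respectively with $g^{'}(y_\alpha)$) does not destroy coercivity because $g$ is non-decreasing, so $g^{'}\geq 0$, and it keeps the form bounded because $g^{'}$ is bounded. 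Lax--Milgram then yields a unique $p_\varepsilon\in H^{1}_{0}(\Omega)$ and a unique $p_\alpha\in H^{1}_{0}(\Omega)$.

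Next I would subtract \eqref{eq1.24} from \eqref{eq1.23} and split the nonlinear term by adding and subtracting $g^{'}(y_\varepsilon)p_\alpha$, which gives
\begin{equation*}
A^{*}w_\varepsilon+g^{'}(y_\varepsilon)\,w_\varepsilon=(y_\varepsilon-y_\alpha)-\big(g^{'}(y_\varepsilon)-g^{'}(y_\alpha)\big)p_\alpha=:r_\varepsilon .
\end{equation*}
Testing this identity with $w_\varepsilon$, using $\langle A^{*}w_\varepsilon,w_\varepsilon\rangle=a(w_\varepsilon,w_\varepsilon)$ together with the coercivity \eqref{eq1.5} and the sign condition $g^{'}\geq 0$, I would obtain
\begin{equation*}
\delta\,\norme{w_\varepsilon}_{H^{1}_{0}(\Omega)}^{2}\leq a(w_\varepsilon,w_\varepsilon)+\int_{\Omega}g^{'}(y_\varepsilon)\,w_\varepsilon^{2}\,dx=\langle r_\varepsilon,w_\varepsilon\rangle\leq C_{P}\,\norme{r_\varepsilon}_{L^{2}(\Omega)}\,\norme{w_\varepsilon}_{H^{1}_{0}(\Omega)},
\end{equation*}
where $C_{P}$ denotes the Poincaré constant, hence $\delta\,\norme{w_\varepsilon}_{H^{1}_{0}(\Omega)}\leq C_{P}\,\norme{r_\varepsilon}_{L^{2}(\Omega)}$. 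Thus the whole statement reduces to proving $\norme{r_\varepsilon}_{L^{2}(\Omega)}\to 0$.

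The contribution $y_\varepsilon-y_\alpha$ tends to $0$ in $L^{2}(\Omega)$ directly from the strong convergence asserted in the preceding theorem. The main obstacle is the nonlinear term $\big(g^{'}(y_\varepsilon)-g^{'}(y_\alpha)\big)p_\alpha$, and this is exactly where I would invoke the closing remark of that theorem's proof: since $y_\varepsilon\to y_\alpha$ strongly in $H^{2}(\Omega)\cap H^{1}_{0}(\Omega)\hookrightarrow\mathcal{C}(\bar{\Omega})$, the convergence is uniform on $\bar{\Omega}$, so by continuity of $g^{'}$ one gets $\sup_{\bar{\Omega}}\lvert g^{'}(y_\varepsilon)-g^{'}(y_\alpha)\rvert\to 0$, whence $\norme{(g^{'}(y_\varepsilon)-g^{'}(y_\alpha))p_\alpha}_{L^{2}(\Omega)}\leq \sup_{\bar{\Omega}}\lvert g^{'}(y_\varepsilon)-g^{'}(y_\alpha)\rvert\,\norme{p_\alpha}_{L^{2}(\Omega)}\to 0$. (Alternatively, one may argue with almost-everywhere convergence and the dominated convergence theorem, using the boundedness of $g^{'}$ to dominate by $2\sup\lvert g^{'}\rvert\,\lvert p_\alpha\rvert\in L^{2}(\Omega)$.) Combining these two facts gives $\norme{r_\varepsilon}_{L^{2}(\Omega)}\to 0$, and therefore $\norme{p_\varepsilon-p_\alpha}_{H^{1}_{0}(\Omega)}\to 0$, which is the claimed strong convergence. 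A convenient feature of this difference-equation argument is that it establishes strong convergence of the full sequence at once, requiring neither subsequence extraction nor a separate identification of the weak limit.
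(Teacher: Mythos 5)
Your proof is correct, and it takes a genuinely different route from the paper's. The paper argues by compactness: it tests \eqref{eq1.23} with $p_\varepsilon$ itself, uses $g'\geq 0$ and coercivity to get the uniform bound $\delta\|p_\varepsilon\|_{H_0^1(\Omega)}^2\leq \|y_\varepsilon-z_d\|_{H^{-1}(\Omega)}\|p_\varepsilon\|_{H_0^1(\Omega)}$, extracts a weakly convergent subsequence $p_\varepsilon\rightharpoonup\tilde p$ in $H_0^1(\Omega)$, and passes to the limit in $A^*p_\varepsilon=-g'(y_\varepsilon)p_\varepsilon+y_\varepsilon-z_d$ to identify $\tilde p=p_\alpha$; the upgrade to strong $H_0^1(\Omega)$ convergence is then implicit, via the same device as at the end of the proof of Theorem 3.1 (weak $L^2(\Omega)$ convergence of $A^*p_\varepsilon$ gives strong $H^{-1}(\Omega)$ convergence, and $A^*$ is an isomorphism from $H_0^1(\Omega)$ onto $H^{-1}(\Omega)$). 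You instead subtract the two adjoint equations and close a direct energy estimate on the difference $w_\varepsilon=p_\varepsilon-p_\alpha$, reducing the claim to $\|r_\varepsilon\|_{L^2(\Omega)}\to 0$. Both proofs consume exactly the same input from Theorem 5.2, namely the strong convergence of $y_\varepsilon$ and, crucially, the uniform convergence $\|y_\varepsilon-y_\alpha\|_\infty\to 0$, which controls the nonlinear coefficient ($\sup_{\bar{\Omega}}\vert g'(y_\varepsilon)-g'(y_\alpha)\vert\to 0$ in your version; $g'(y_\varepsilon)\to g'(y_\alpha)$ in $L^2(\Omega)$ by Lebesgue's theorem in the paper's). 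What your approach buys: it is quantitative, giving the explicit bound $\|p_\varepsilon-p_\alpha\|_{H_0^1(\Omega)}\leq (C_P/\delta)\|r_\varepsilon\|_{L^2(\Omega)}$; it yields convergence of the full sequence with no subsequence extraction and no uniqueness-of-limit argument; and it sidesteps the somewhat delicate passage to the limit in the product $g'(y_\varepsilon)p_\varepsilon$, which the paper leaves terse. One minor caution: your parenthetical alternative via a.e.\ convergence and dominated convergence, if based only on $L^2$ convergence of $y_\varepsilon$, gives $r_\varepsilon\to 0$ only along subsequences (recovered for the full sequence by a standard sub-subsequence argument); the uniform-convergence argument you give as primary is the clean one.
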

\begin{Proof}
we have seen that $ \mid\mid y_{\varepsilon}-y_{\alpha} \mid\mid_{\infty} \to 0$. Therefore $ y_{\varepsilon} $ remains in a bounded set of $ \R^{n} $ (independent of $\varepsilon$). As $ g $ is a $ \mathcal{C}^{1} $ function, this means that $ \mid\mid g^{'}(y_{\varepsilon}) \mid\mid_{\infty}$ is bounded by a constant $ C $ which does not depend on $ \varepsilon $. In particular $g^{'}(y_{\varepsilon})$ is bounded in $L^{2}(\Omega)  $ and Lebesgue's Theorem implies the strong convergence of $g^{'}(y_{\varepsilon})$ to $g^{'}(y_{\alpha})$ in $L^{2}(\Omega)$.\\

Let $ p_{\varepsilon}  $ be the solution of \eqref{eq1.23}. This gives 
\begin{center}
    $ \langle\ A^{*}p_{\varepsilon},p_{\varepsilon} \rangle  $+$ \langle\ g^{'}(y_{\varepsilon})p_{\varepsilon},p_{\varepsilon} \rangle $=$  \langle\ y_{\varepsilon}-z_{d},p_{\varepsilon} \rangle, $

\end{center}

as $ g^{'} \geq 0 $ and $ A^{*} $ is coercive we get 
\begin{center}
$ \delta \mid\mid p_{\varepsilon} \mid\mid_{H_{0}^{1}(\Omega)}^{2}~  \leq $  $ \mid\mid y_{\varepsilon}-z_{d} \mid\mid_{H^{-1}(\Omega)} \mid\mid p_{\varepsilon} \mid\mid_{H_{0}^{1}(\Omega)}.$
\end{center}

So, $p_{\varepsilon}$ is bounded in  $ H_{0}^{1}(\Omega) $ and weakly converges to $ \tilde{p} $ in $ H_{0}^{1}(\Omega) $ . Moreover, $ p_{\varepsilon}$ is the solution to 
\begin{center}
$A^{*}p_{\varepsilon}= -g^{'}(y_{\varepsilon})p_{\varepsilon}+y_{\varepsilon}-z_{d}$ on $ \Omega, $
\end{center}
the left-hande side (weakly) converges to $-g^{'}(y_{\alpha}) \tilde{p}+y_{\alpha}-z_{d}$ in $L^{2}(\Omega)  $; this achieves the proof.

\end{Proof}

	\subsection{Optimality conditions for the penalized problem}
	
We apply Theorem 4.1 to the above penalized problem $ (\mathcal{P}_{\alpha}^{\varepsilon}) $. We set 

\begin{center}
$  x=y,~~  $ $ u=(v,\xi),~~$ $(x_{0},u_{0})=(x_{\varepsilon},v_{\varepsilon},\xi_{\varepsilon})$ \\
\vspace{0.2cm}
$ \mathcal{X}= H^{2}(\Omega) \cap H^{1}_{0}(\Omega),~~ $ $  \mathcal{Z}_{2}= \mathcal{X}$\\  
$  \mathcal{U}= L^{2}(\Omega)\times L^{2}(\Omega)  $ \\
\vspace{0.2cm}
$ \mathcal{U}_{ad}=U_{ad} \times V_{ad} ,~~ $ 
$ P=\{y \in H^{2}(\Omega) \cap H^{1}_{0}(\Omega) ~ \vert ~ y \geq 0 \}\times \R^{+}$ \\
\end{center}
We recall that $\langle\ , \rangle$  denote the $  L^{2}(\Omega)$-scalar product, and  
\begin{center}
$ G(y,v,\xi)=(-y,\langle\ 1, \frac{y}{y+\alpha}+\frac{\xi}{\xi+\alpha} \rangle -Area(\Omega)),~~  $ 
\vspace{0.2cm}
$ f(x,u)=J_{\alpha}^{\varepsilon}(y,v,\xi).$
\end{center}
\newpage
There is no equality constraint and $ G $ is $ \mathcal{C}^{1},$ 
\begin{center}
$ G^{'}(y_{\varepsilon},v_{\epsilon},\xi_{\epsilon})(y,v,\xi) $ =  $ (-y, \langle\ y, \frac{\alpha}{(y_{\varepsilon}+\alpha)^{2}} \rangle+ \langle\ \xi, \frac{\alpha}{(\xi_{\varepsilon}+\alpha)^{2}} \rangle)$.
\end{center}
Here
\begin{center}
 $ \mathcal{U}_{ad}(v_{\varepsilon},\xi_{\varepsilon}) $= $\{(\lambda(v-v_{\varepsilon}),\mu(\xi-\xi_{\varepsilon}))~ \vert ~\lambda \geq 0,~ \mu \geq 0,~ v\in U_{ad}, ~\xi \in V_{ad}\},$ \\
 \vspace{0.3cm}
 $ P (G(y_{\varepsilon},v_{\varepsilon},\xi_{\varepsilon})) $=$ \{(-p+\lambda y_{\varepsilon}, -\gamma-\lambda(\langle\ 1, \frac{y_{\varepsilon}}{y_{\varepsilon}+\alpha}+\frac{\xi_{\varepsilon}}{\xi_{\varepsilon}+\alpha} \rangle -Area(\Omega)) \in H^{2}(\Omega) \cap H_{0}^{1}(\Omega) \times \R ~\vert ~ \gamma, ~\lambda \geq 0, ~p \geq 0        \} $
 \end{center} 
 Let us write the the condition \eqref{eq1.16} : for any $ (z,\beta) $ in $ \mathcal{X}\times \R $ we must solve the system : 
  \begin{center}
   $\begin{aligned}
		  -y+p-\lambda y_{\varepsilon}~~~~~~~~~~~~~~~~~~~~~~~~~~~~~~~~~~~~~~~~~~~~~~~~~~~~~~~~~~~~~~~~&=~&z ,\\  
		  \langle\ y, \frac{\alpha}{(y_{\varepsilon}+\alpha)^{2}} \rangle   +  \langle\ \mu (\xi-\xi_{\varepsilon}), \frac{\alpha}{(\xi_{\varepsilon}+\alpha)^{2}} \rangle  +\gamma +\lambda(\langle\ 1, \frac{y_{\varepsilon}}{y_{\varepsilon}+\alpha}+\frac{\xi_{\varepsilon}}{\xi_{\varepsilon}+\alpha} \rangle -Area(\Omega))&=~&\beta, \\
		\end{aligned}$
  \end{center}
with $  \mu, ~\gamma,~ \lambda \geq 0,~ $  $ \xi \in V_{ad},~ v\in U_{ad} $, and $  y\in \mathcal{X}$. Taking $ y $ from the first equation into the second we have to solve : 
\begin{center}
  $\langle\ p-\lambda y_{\varepsilon}-z, \frac{\alpha}{(y_{\varepsilon}+\alpha)^{2}} \rangle  $ +$  \langle\ \mu (\xi-\xi_{\varepsilon}), \frac{\alpha}{(\xi_{\varepsilon}+\alpha)^{2}} \rangle  +\gamma +\lambda(\langle\ 1, \frac{y_{\varepsilon}}{y_{\varepsilon}+\alpha}+\frac{\xi_{\varepsilon}}{\xi_{\varepsilon}+\alpha} \rangle -Area(\Omega))= \beta. $
\end{center}

\vspace{1cm}
So 
\begin{center}
 $\langle\ p, \frac{\alpha}{(y_{\varepsilon}+\alpha)^{2}} \rangle  $ -$ \lambda \langle\  y_{\varepsilon}, \frac{\alpha}{(y_{\varepsilon}+\alpha)^{2}} \rangle  $ +$  \langle\ \mu (\xi-\xi_{\varepsilon}), \frac{\alpha}{(\xi_{\varepsilon}+\alpha)^{2}} \rangle  +\gamma +\lambda(\langle\ 1, \frac{y_{\varepsilon}}{y_{\varepsilon}+\alpha}+\frac{\xi_{\varepsilon}}{\xi_{\varepsilon}+\alpha} \rangle -Area(\Omega))= \beta $ + $\langle\ z, \frac{\alpha}{(y_{\varepsilon}+\alpha)^{2}} \rangle = \rho $
\end{center}
with $  \mu,~ \gamma, ~\lambda \geq 0,~$  $ \xi \in V_{ad},~ v\in U_{ad} $. We see that we may take : $  \mu=1,~ \xi=\xi_{\varepsilon},~ p=0,$ and 
\begin{itemize}
  \item If $\rho \geq 0  $, we choose $  \lambda=0, ~$ $   \gamma=\rho$
  \item If $ \rho <0 $, we have two cases :\\
\end{itemize}
 $ ~~~~~~~~~~$   - If  $(\langle\ 1, \frac{y_{\varepsilon}}{y_{\varepsilon}+\alpha}+\frac{\xi_{\varepsilon}}{\xi_{\varepsilon}+\alpha} \rangle -Area(\Omega)) = \zeta <0 $, then we set $ \gamma=  \lambda \langle\  y_{\varepsilon}, \frac{\alpha}{(y_{\varepsilon}+\alpha)^{2}} \rangle , ~$ $ \lambda=\frac{\rho}{\zeta}. $ \\
\vspace{0.3cm}
  $ ~~~~~~~~~~$  - If $(\langle\ 1, \frac{y_{\varepsilon}}{y_{\varepsilon}+\alpha}+\frac{\xi_{\varepsilon}}{\xi_{\varepsilon}+\alpha} \rangle -Area(\Omega)) =0$, then we set $  \gamma=0,~ \lambda = -\frac{\rho}{\eta}$, such that $ \eta= \lambda \langle\  y_{\varepsilon}, \frac{\alpha}{(y_{\varepsilon}+\alpha)^{2}} \rangle.  $ \\
 Indeed, we have 
 \begin{center}
  $(\langle\ 1, \frac{y_{\varepsilon}}{y_{\varepsilon}+\alpha}+\frac{\xi_{\varepsilon}}{\xi_{\varepsilon}+\alpha} \rangle -Area(\Omega)) =0,$
 \end{center}
 in view of Lemma 3.2, we have 
 \begin{center}
    $(\langle\ 1, \frac{y_{\varepsilon}}{y_{\varepsilon}+\alpha}+\frac{\xi_{\varepsilon}}{\xi_{\varepsilon}+\alpha} \rangle -Area(\Omega)) =0 \Longleftrightarrow y_{\varepsilon}.\xi_{\varepsilon}=\alpha^{2}  ~ \text{a.e} ~\text{in} ~ \Omega.$
    \end{center}   
  Therefore $ y_{\varepsilon} $ and $\xi_{\varepsilon}$ are strictly positive.  
   (Here $ y_{\varepsilon} >0,~\text{and} ~\xi_{\varepsilon} >0,  ~~~ \text{becauce} ~~ \alpha >0 ~~$ fixed). Hence, $ \eta >0 $ and $ \lambda > 0. $  \\
\vspace{0.4cm}
So condition \eqref{eq1.16} is always satisfied and we may apply Theorem 4.1, since $ J_{\varepsilon}^{\alpha} $ is Fréchet differentiable, and
\begin{center}
$J_{\varepsilon}^{\alpha^{'}}(y_{\varepsilon},v_{\varepsilon},\xi_{\varepsilon})(y,v,\xi) =$ $ \left( \begin{array}{lcrr} (J_{\varepsilon}^{\alpha})_{y}^{'}(y_{\varepsilon},v_{\varepsilon},\xi_{\varepsilon}) & (J_{\varepsilon}^{\alpha})_{v}^{'}(y_{\varepsilon},v_{\varepsilon},\xi_{\varepsilon}) & (J_{\varepsilon}^{\alpha})_{\xi}^{'}(y_{\varepsilon},v_{\varepsilon},\xi_{\varepsilon}) \\
 \end{array} \right). \left( \begin{array}{ccc}
y  \\
v  \\
\xi  \end{array} \right).$
\end{center}
We have : 
		\begin{equation} 
		J_{\varepsilon}^{\alpha}(y,v,\xi) = \left\{  
\begin{split}
J(y,v) & + \frac{1}{2\varepsilon}\mid\mid Ay+g(y)-f-v-\xi \mid\mid_{L^{2}(\Omega)}^{2}  \\
 & + \frac{1}{2}\mid\mid A(y-y_{\alpha}) \mid\mid_{L^{2}(\Omega)}^{2}+\frac{1}{2}\mid\mid v-v_{\alpha} \mid\mid_{L^{2}(\Omega)}^{2} \\
 & + \frac{1}{2}\mid\mid \xi-\xi_{\alpha} \mid\mid_{L^{2}(\Omega)}^{2}.
\end{split}
 \right.
\end{equation}
\newpage
		So,
		\begin{center}
		$ (J_{\varepsilon}^{\alpha})_{y}^{'}(y_{\varepsilon},v_{\varepsilon},\xi_{\varepsilon}) $= \(\langle\ 1,y_{\varepsilon}-z_{d} \rangle + \frac{1}{\varepsilon} \langle\ A+g^{'}(y_{\varepsilon}), Ay_{\varepsilon}+g(y_{\varepsilon})-f-v_{\varepsilon}-\xi_{\varepsilon} \rangle +\langle\ A, A(y_{\varepsilon}-y_{\alpha})  \rangle \). \\
		$ (J_{\varepsilon}^{\alpha})_{v}^{'}(y_{\varepsilon},v_{\varepsilon},\xi_{\varepsilon}) $= \(\langle\ \nu ,v_{\varepsilon}-v_{d} \rangle +\langle\ 1,v_{\varepsilon}-v_{\alpha} \rangle - \frac{1}{\varepsilon} \langle\ 1, Ay_{\varepsilon}+g(y_{\varepsilon})-f-v_{\varepsilon}-\xi_{\varepsilon} \rangle  \). \\
		$ (J_{\varepsilon}^{\alpha})_{\xi}^{'}(y_{\varepsilon},v_{\varepsilon},\xi_{\varepsilon}) $= \(\langle\ 1 ,\xi_{\varepsilon}-\xi_{\alpha} \rangle  - \frac{1}{\varepsilon} \langle\ 1, Ay_{\varepsilon}+g(y_{\varepsilon})-f-v_{\varepsilon}-\xi_{\varepsilon} \rangle  \). \\
		\end{center}
		Therefore \\
	$  J_{\varepsilon}^{\alpha^{'}}(y_{\varepsilon},v_{\varepsilon},\xi_{\varepsilon})(y,v,\xi)= \langle\ y ,y_{\varepsilon}-z_{d} \rangle +\nu \langle\  v ,v_{\varepsilon}-v_{d} \rangle + \langle\ v,v_{\varepsilon}-v_{\alpha} \rangle + \langle\ \xi ,\xi_{\varepsilon}-\xi_{\alpha} \rangle +\langle\ Ay, A(y_{\varepsilon}-y_{\alpha})  \rangle +\langle\ q_{\varepsilon}, A_{\varepsilon}y-v-\xi  \rangle $, \\
	\vspace{0.3cm}
	where 
	\begin{equation} \label{eq1.25}
	  q_{\varepsilon}=\dfrac{Ay_{\varepsilon}+g(y_{\varepsilon})-f-v_{\varepsilon}-\xi_{\varepsilon}}{\varepsilon} \in L^{2}(\Omega) ~~~~ \text{and}~~~~  A_{\varepsilon}=A+g^{'}(y_{\varepsilon}). 
	\end{equation}
	
Thers exists $ s_{\varepsilon} \in \mathcal{X}^{*} $ and $ r_{\varepsilon} \in \R $ such that : 
\begin{equation} \label{eq1.26}
 \forall y \in \mathcal{X} \quad    \langle\ y ,y_{\varepsilon}-z_{d} \rangle +\langle\ q_{\varepsilon} , A_{\varepsilon}y \rangle + \langle\ Ay , A(y_{\varepsilon}-y_{\alpha}) \rangle +r_{\varepsilon} \langle\ y , \frac{\alpha}{(y_{\varepsilon}+\alpha)^{2}} \rangle -\langle\ \langle\ s_{\varepsilon} , y \rangle \rangle = 0,   
\end{equation}
\begin{equation} \label{eq1.27}
 \forall v\in U_{ad}  \quad     \langle\ \nu (v_{\varepsilon}-v_{d})+v_{\varepsilon}-v_{\alpha}-q_{\varepsilon}, v-v_{\varepsilon} \rangle \geq 0,
\end{equation}
 \begin{equation} \label{eq1.28}
\forall \xi \in V_{ad}  \quad   \langle\ r_{\varepsilon}\frac{\alpha}{(\xi_{\alpha}+\alpha)^{2}}-q_{\varepsilon}+\xi_{\varepsilon}-\xi_{\alpha}, \xi-\xi_{\varepsilon} \rangle \geq 0,
\end{equation}
\begin{equation} \label{eq1.29}
 r_{\varepsilon}\geq 0,\quad    r_{\varepsilon}( \langle\ 1, \frac{y_{\varepsilon}}{y_{\varepsilon}+\alpha}+\frac{\xi_{\varepsilon}}{\xi_{\varepsilon}+\alpha} \rangle -Area(\Omega) )=0,
\end{equation}
\begin{equation} \label{eq1.30}
 \forall y\in \mathcal{X}, ~ y\geq 0,  \quad  \langle\ \langle\ s_{\varepsilon} , y \rangle \rangle \geq 0, ~~ \langle\ \langle\ s_{\varepsilon} , y_{\varepsilon} \rangle \rangle =0,
\end{equation}
	where $ \langle\ \langle\ ,\rangle\rangle$ denotes the duality product between $ \mathcal{X}^{*} $ and $ \mathcal{X} $. \\
	Finally, we have optimality conditions on the penalized system, without any further assumption  : 
	\begin{Theorem} 
	The solution $ (y_{\varepsilon},v_{\varepsilon},\xi_{\varepsilon})$  of problem $ (\mathcal{P}_{\varepsilon}^{\alpha}) $ satisfies the following optimality  system :
\begin{equation} \label{eq1.31}
 \forall y\in \tilde{K}   \quad  \langle\ p_{\varepsilon}+q_{\varepsilon} ,A_{\varepsilon}(y-y_{\varepsilon}) \rangle + \langle\ A(y-y_{\varepsilon}) , A(y_{\varepsilon}-y_{\alpha}) \rangle +r_{\varepsilon} \langle\ y-y_{\varepsilon} , \frac{\alpha}{(y_{\varepsilon}+\alpha)^{2}} \rangle \geq 0, 
\end{equation}
\begin{equation} \label{eq1.32}
 \forall v\in U_{ad}  \quad    \langle\ \nu (v_{\varepsilon}-v_{d})+v_{\varepsilon}-v_{\alpha}-q_{\varepsilon}, v-v_{\varepsilon} \rangle \geq 0,
\end{equation}
 \begin{equation} \label{eq1.33}
 \forall \xi \in V_{ad}  \quad   \langle\ r_{\varepsilon}\frac{\alpha}{(\xi_{\alpha}+\alpha)^{2}}-q_{\varepsilon}+\xi_{\varepsilon}-\xi_{\alpha}, \xi-\xi_{\varepsilon} \rangle \geq 0,
\end{equation}
\begin{equation} \label{eq1.34}
 r_{\varepsilon}\geq 0, ~~  r_{\varepsilon} \biggl( \langle\ 1, \frac{y_{\varepsilon}}{y_{\varepsilon}+\alpha}+\frac{\xi_{\varepsilon}}{\xi_{\varepsilon}+\alpha} \rangle -Area(\Omega) \biggr) =0,
\end{equation}
	where $ p_{\varepsilon} $ is given by \eqref{eq1.23} and $ q_{\varepsilon} $ by \eqref{eq1.25}. 
	\end{Theorem}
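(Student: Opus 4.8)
The statement is, in effect, a reformulation of the raw Karush--Kuhn--Tucker conditions \eqref{eq1.26}--\eqref{eq1.30}, which were produced by applying Theorem 4.1 to $(\mathcal{P}_\varepsilon^\alpha)$ once the qualification condition \eqref{eq1.16} had been checked at $(y_\varepsilon,v_\varepsilon,\xi_\varepsilon)$. The plan is therefore to take \eqref{eq1.26}--\eqref{eq1.30} as the starting point. I would first note that the control inequalities \eqref{eq1.27}, \eqref{eq1.28} and the complementarity relation \eqref{eq1.29} are \emph{identical} to the claimed \eqref{eq1.32}, \eqref{eq1.33} and \eqref{eq1.34}, so they require nothing further. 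All the genuine work is to convert the variational \emph{equality} \eqref{eq1.26}, which still involves the unknown multiplier $s_\varepsilon$ of the pointwise state constraint $y\geq 0$, into the variational \emph{inequality} \eqref{eq1.31} posed over the admissible cone $\tilde K=\{y\in\mathcal{X}\mid y\geq 0\}$.

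The first step is to absorb the tracking term $\langle y,y_\varepsilon-z_d\rangle$ into an adjoint contribution. Since $g'(y_\varepsilon)$ acts by pointwise multiplication it is self-adjoint on $L^2(\Omega)$, so $A_\varepsilon=A+g'(y_\varepsilon)$ has adjoint $A_\varepsilon^{*}=A^{*}+g'(y_\varepsilon)$ and the defining relation \eqref{eq1.23} reads $A_\varepsilon^{*}p_\varepsilon=y_\varepsilon-z_d$. Pairing this with an arbitrary $y\in\mathcal{X}$ and using Green's formula (legitimate because $y,p_\varepsilon\in H_0^1(\Omega)$) gives $\langle y,y_\varepsilon-z_d\rangle=\langle A_\varepsilon y,p_\varepsilon\rangle=\langle p_\varepsilon,A_\varepsilon y\rangle$. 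Substituting this into \eqref{eq1.26} and merging with the penalty term $\langle q_\varepsilon,A_\varepsilon y\rangle$ produces exactly the block $\langle p_\varepsilon+q_\varepsilon,A_\varepsilon y\rangle$ that appears in \eqref{eq1.31}.

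The second and decisive step is to eliminate $s_\varepsilon$. Because \eqref{eq1.26} holds for \emph{every} $y\in\mathcal{X}$, I would test it with the increment $y-y_\varepsilon$ for an arbitrary $y\in\tilde K$; by linearity each term then carries the argument $y-y_\varepsilon$, reproducing the left-hand side of \eqref{eq1.31}, while the contribution of $s_\varepsilon$ becomes $\langle\langle s_\varepsilon,y-y_\varepsilon\rangle\rangle$. The two properties of $s_\varepsilon$ recorded in \eqref{eq1.30} now finish the argument: the complementarity $\langle\langle s_\varepsilon,y_\varepsilon\rangle\rangle=0$ cancels the subtracted piece, and the positivity $\langle\langle s_\varepsilon,y\rangle\rangle\geq 0$ for $y\geq 0$ shows the surviving term is nonnegative, whence the whole left-hand side is $\geq 0$, which is \eqref{eq1.31}. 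I expect the main obstacle to be bookkeeping rather than analysis: one must track the signs carefully through the substitution $w=y-y_\varepsilon$ and confirm that every pairing is well defined---$p_\varepsilon\in H_0^1(\Omega)$ by \eqref{eq1.23}, $q_\varepsilon\in L^2(\Omega)$ by \eqref{eq1.25}, and $Ay,A(y_\varepsilon-y_\alpha)\in L^2(\Omega)$ by the $H^2$-regularity built into $\mathcal{X}$---so that the adjoint manipulation and the $L^2$-dualities are all justified.
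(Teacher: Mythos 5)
Your proposal is correct and follows essentially the same route as the paper's own proof: the paper likewise applies \eqref{eq1.26} to the increment $y-y_{\varepsilon}$, invokes the two properties of $s_{\varepsilon}$ in \eqref{eq1.30} to discard the state-constraint multiplier and obtain the inequality over $\tilde K$, and (implicitly) uses the adjoint equation \eqref{eq1.23} to replace the tracking term by $\langle p_{\varepsilon},A_{\varepsilon}(y-y_{\varepsilon})\rangle$, while \eqref{eq1.27}--\eqref{eq1.29} are carried over unchanged as \eqref{eq1.32}--\eqref{eq1.34}. Your write-up is in fact slightly more careful than the paper's, since you make the self-adjointness of the multiplication operator $g'(y_{\varepsilon})$ and the resulting identity $\langle y,y_{\varepsilon}-z_{d}\rangle=\langle p_{\varepsilon},A_{\varepsilon}y\rangle$ explicit rather than leaving that substitution tacit.
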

	\begin{Proof}
	Relation \eqref{eq1.26} applied to $ y-y_{\varepsilon} $ gives : 
	\begin{center}
	$ \forall y\in \mathcal{X}  \quad  \langle\ y-y_{\varepsilon} ,y_{\varepsilon}-z_{d} \rangle +\langle\ q_{\varepsilon} , A_{\varepsilon}(y-y_{\varepsilon} \rangle + \langle\ A(y-y_{\varepsilon}) , A(y_{\varepsilon}-y_{\alpha}) \rangle +r_{\epsilon} \langle\ y-y_{\varepsilon} , \frac{\alpha}{(y_{\varepsilon}+\alpha)^{2}} \rangle =\langle\ \langle\ s_{\varepsilon} , y \rangle \rangle - \langle\ \langle\ s_{\varepsilon} , y_{\varepsilon} \rangle \rangle ,$            
	\end{center}
	So, with \eqref{eq1.30}, we obtain 
	\begin{center}
	$ \forall y\in \tilde{K}  $   $ \quad $  $ \langle\ p_{\varepsilon}+q_{\varepsilon} ,A_{\varepsilon}(y-y_{\varepsilon} \rangle + \langle\ A(y-y_{\varepsilon}) , A(y_{\varepsilon}-y_{\alpha}) \rangle +r_{\varepsilon} \langle\ y-y_{\varepsilon} , \frac{\alpha}{(y_{\varepsilon}+\alpha)^{2}} \rangle \geq 0 ,$ \\ 
	\end{center}
where $ p_{\varepsilon} $ is given by \eqref{eq1.23} and $ q_{\varepsilon} $ by \eqref{eq1.25}, and $\tilde{K} = K \cap (H^{2}(\Omega) \cap H_{0}^{1}(\Omega)). $   
	\end{Proof}
	
	\section{OPTIMALITY CONDITIONS FOR $( \mathcal{P}^{\alpha}) $}

	\subsection{Qualification assumption }
	
	Now we would like to study the asymptotic behaviour of the previous optimality conditions \eqref{eq1.31}-\eqref{eq1.34} when $  \varepsilon $ goes to $ 0 $ and we need some estimations on $ q_{\varepsilon} $ and $ r_{\varepsilon}$. We have to assume some qualification conditions to pass to the limit in the penalized optimality system, we remark that 
	
	\begin{center}
	$ A_{\varepsilon}y_{\varepsilon}-v_{\varepsilon}-\xi_{\varepsilon}=Ay_{\varepsilon}+g(y_{\varepsilon})-v_{\varepsilon}-\xi_{\varepsilon}-f+f+g^{'}(y_{\varepsilon})y_{\varepsilon}-g(y_{\varepsilon}) . $
	\end{center}
	We set 
	\begin{equation} \label{eq1.35}	
	   \omega_{\varepsilon}=g^{'}(y_{\varepsilon})y_{\varepsilon}-g(y_{\varepsilon}) ~~~\text{and}~~~   \omega_{\alpha}=g^{'}(y_{\alpha})y_{\alpha}-g(y_{\alpha}),
	\end{equation}
	
	 so that 
	 \begin{center}
	 $ A_{\varepsilon}y_{\varepsilon}-v_{\varepsilon}-\xi_{\varepsilon}= \varepsilon q_{\varepsilon}+f+\omega_{\varepsilon}.$
	 \end{center}
Let us choose $ (y,v,\xi) $ in $ 	\tilde{K} \times U_{ad} \times V_{ad},$ and add relation \eqref{eq1.31}-\eqref{eq1.33}. We have : 
\\
\begin{center}
$  \langle\ p_{\varepsilon}, A_{\varepsilon}(y-y_{\varepsilon}) \rangle+
 \langle\ q_{\varepsilon},A_{\epsilon}(y-y_{\varepsilon}) \rangle + \langle\ A(y-y_{\varepsilon}) , A(y_{\varepsilon}-y_{\alpha}) \rangle +
 \langle\ \nu(v_{\varepsilon}-v_{d})+v_{\varepsilon}-v_{\alpha}, v-v_{\varepsilon} \rangle $  \\ 
	\vspace{0.2cm}
	$  
	+ \langle\ -q_{\varepsilon}, v-v_{\varepsilon} \rangle+ 
r_{\varepsilon} \langle\ y-y_{\epsilon} , \frac{\alpha}{(y_{\varepsilon}+\alpha)^{2}} \rangle     +r_{\varepsilon} \langle\ \frac{\alpha}{(\xi_{\alpha}+\alpha)^{2}}, \xi-\xi_{\varepsilon} \rangle +\langle\ \xi_{\varepsilon}-\xi_{\alpha}, \xi-\xi_{\varepsilon} \rangle + \langle\ -q_{\varepsilon}, \xi-\xi_{\varepsilon} \rangle \geq 0. $
\end{center}
	\vspace{0.5cm}
	So that: 
	\begin{center}
	$ \langle\ q_{\varepsilon},f+\omega_{\varepsilon}+v+\xi-A_{\varepsilon}y \rangle -r_{\varepsilon} \biggl( \langle\ \frac{\alpha}{(y_{\varepsilon}+\alpha)^{2}} ,y-y_{\varepsilon}  \rangle + \langle\ \frac{\alpha}{(\xi_{\alpha}+\alpha)^{2}}, \xi-\xi_{\varepsilon} \rangle \biggr) $ $  \leq $ \\ 
	\vspace{0.2cm}
	$
	\langle\ p_{\varepsilon},A_{\varepsilon}(y-y_{\varepsilon} \rangle + \langle\ A(y-y_{\varepsilon}) , A(y_{\varepsilon}-y_{\alpha}) \rangle + \langle\ \nu (v_{\varepsilon}-v_{d})+v_{\varepsilon}-v_{\alpha}, v-v_{\varepsilon} \rangle +\langle\ \xi_{\varepsilon}-\xi_{\alpha}, \xi-\xi_{\varepsilon} \rangle -\varepsilon \mid\mid q _{\varepsilon}\mid\mid_{2}^{2}.  $   
	\vspace{0.2cm}
	\end{center}

	The right hand side is uniformly bounded with respect to $\varepsilon  $ by a constant $ C $ which only depends of  $ y, ~$ $ v,~ $ $ \xi $. Here we use as well Theorem 5.2.
	Moreover relation \eqref{eq1.34} gives 
	
	\begin{center}
	 $ r_{\varepsilon}  \langle\ 1, \frac{y_{\varepsilon}}{y_{\varepsilon}+\alpha}+\frac{\xi_{\varepsilon}}{\xi_{\varepsilon}+\alpha} \rangle  =r_{\varepsilon} Area(\Omega),$
	\end{center}
	so that we finally obtain : 
	
	\begin{equation} \label{eq1.36}	
	 -\langle\ q_{\varepsilon},Ay+g^{'}(y_{\varepsilon})y-f-v-\xi-\omega_{\varepsilon} \rangle -r_{\varepsilon} \biggl(  \langle\ \frac{\alpha}{(y_{\varepsilon}+\alpha)^{2}} ,y-y_{\varepsilon}  \rangle + \langle\ \frac{\alpha}{(\xi_{\varepsilon}+\alpha)^{2}}, \xi-\xi_{\varepsilon} \rangle \biggr)  \leq ~ C_{(y, v, \xi)},
	\end{equation}
	where 
	\begin{center}
	$  q_{\varepsilon}=\dfrac{Ay_{\varepsilon}+g(y_{\varepsilon})-f-v_{\varepsilon}-\xi_{\varepsilon}}{\varepsilon} \in L^{2}(\Omega)  ~$ and $~ A_{\varepsilon}=A+g^{'}(y_{\varepsilon}) $, $~ \omega_{\varepsilon}=g^{'}(y_{\varepsilon})y_{\varepsilon}-g(y_{\varepsilon}). $ 
	\end{center}
	We consider two cases :
	\begin{enumerate}[label=(\roman*)]
	
	 \item If 
	\begin{center}
	 $  \langle\ 1, \frac{y_{\alpha}}{y_{\alpha}+\alpha}+\frac{\xi_{\alpha}}{\xi_{\alpha}+\alpha} \rangle < Area(\Omega) ,$ 
	\end{center}
	as $ \langle\ 1, \frac{y_{\varepsilon}}{y_{\varepsilon}+\alpha}+\frac{\xi_{\varepsilon}}{\xi_{\varepsilon}+\alpha} \rangle \to \langle\ 1, \frac{y_{\alpha}}{y_{\alpha}+\alpha}+\frac{\xi_{\alpha}}{\xi_{\alpha}+\alpha} \rangle $, there exists $ \varepsilon_{0} > 0$ such that 
	\begin{center}
	$ \forall \varepsilon ~\leq \varepsilon_{0} ~~~~  $ $ \langle\ 1, \frac{y_{\varepsilon}}{y_{\varepsilon}+\alpha}+\frac{\xi_{\varepsilon}}{\xi_{\varepsilon}+\alpha} \rangle < Area(\Omega), $
	  \end{center}
	  and relation \eqref{eq1.34} implies that $ r_{\varepsilon} =0$. So the limit value is $ r_{\alpha}=0 $.
	   \item If 
	  \begin{center}
	   $  \langle\ 1, \frac{y_{\alpha}}{y_{\alpha}+\alpha}+\frac{\xi_{\alpha}}{\xi_{\alpha}+\alpha} \rangle = Area(\Omega),$
	  \end{center}
	  as $ \langle\ 1, \frac{y_{\varepsilon}}{y_{\varepsilon}+\alpha}+\frac{\xi_{\varepsilon}}{\xi_{\varepsilon}+\alpha} \rangle \to \langle\ 1, \frac{y_{\alpha}}{y_{\alpha}+\alpha}+\frac{\xi_{\alpha}}{\xi_{\alpha}+\alpha} \rangle $, there exists $ \varepsilon_{0} > 0$ such that 
	\begin{center}
	$ \forall \varepsilon ~ \leq \varepsilon_{0} ~~~ $ $ \langle\ 1, \frac{y_{\varepsilon}}{y_{\varepsilon}+\alpha}+\frac{\xi_{\varepsilon}}{\xi_{\varepsilon}+\alpha} \rangle =Area(\Omega), $
	  \end{center}
	   \end{enumerate}
	
	  we cannot conclude immediately, so we assume the following condition : 
	 \begin{center}
	  $ \forall \alpha >0 $ such that $ \langle\ 1, \frac{y_{\varepsilon}}{y_{\epsilon}+\alpha}+\frac{\xi_{\varepsilon}}{\xi_{\varepsilon}+\alpha} \rangle =Area(\Omega),$ \\
	    \vspace{0.2cm}
	 $~~~~~~~~~~~~~~~~~~~~~~~~~~~~~~~ g^{'} $ is locally lipschitz continuous,  \hspace{40mm} $(\mathcal{H}_{1})  $ \V \\
	   \vspace{0.2cm}
	 
	 $ U_{ad} $ has a non empty $ L^{\infty}\text{-interior} $ (denoted $ \text{Int}_{\infty}(U_{ad})$) and that $  -(f+\omega_{\alpha})\in \text{Int}_{\infty}(U_{ad}).$ 
	
	 \end{center}

	\begin{Theorem} 
	Assume ($ \mathcal{H}_{1} $), then $ r_{\varepsilon} $ is bounded by a constant independent of $ \varepsilon $ and we may extract a subsequence that converges to $ r_{\alpha} $.

	\end{Theorem}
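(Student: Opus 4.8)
The plan is to read off a uniform bound for $r_{\varepsilon}$ directly from the aggregated inequality \eqref{eq1.36}, by testing it against one carefully chosen admissible triple $(y,v,\xi)$. Abbreviate the coefficient of $-r_{\varepsilon}$ in \eqref{eq1.36} by $\Theta_{\varepsilon}(y,\xi)=\langle \frac{\alpha}{(y_{\varepsilon}+\alpha)^{2}},y-y_{\varepsilon}\rangle+\langle \frac{\alpha}{(\xi_{\varepsilon}+\alpha)^{2}},\xi-\xi_{\varepsilon}\rangle$ and the argument of $q_{\varepsilon}$ by $\Psi_{\varepsilon}=Ay+g'(y_{\varepsilon})y-f-v-\xi-\omega_{\varepsilon}$, so that \eqref{eq1.36} reads $-\langle q_{\varepsilon},\Psi_{\varepsilon}\rangle-r_{\varepsilon}\Theta_{\varepsilon}(y,\xi)\le C_{(y,v,\xi)}$. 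Since $r_{\varepsilon}\ge 0$ by \eqref{eq1.34}, it suffices to exhibit an admissible choice for which $\Theta_{\varepsilon}(y,\xi)\le -c<0$ uniformly in $\varepsilon$ and $\langle q_{\varepsilon},\Psi_{\varepsilon}\rangle\le 0$; then $c\,r_{\varepsilon}\le -r_{\varepsilon}\Theta_{\varepsilon}\le C_{(y,v,\xi)}$ yields the bound.

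First I would take $y=0\in\tilde{K}$ and $\xi=0\in V_{ad}$, which are admissible. Then $\Theta_{\varepsilon}(0,0)=-\int_{\Omega}\frac{\alpha y_{\varepsilon}}{(y_{\varepsilon}+\alpha)^{2}}\,dx-\int_{\Omega}\frac{\alpha \xi_{\varepsilon}}{(\xi_{\varepsilon}+\alpha)^{2}}\,dx\le 0$. Since $y_{\varepsilon}\to y_{\alpha}$ uniformly and $\xi_{\varepsilon}\to \xi_{\alpha}$ in $L^{2}(\Omega)$ (Theorem 5.2), and the integrands are dominated by $\tfrac14$, dominated convergence gives $\Theta_{\varepsilon}(0,0)\to -c_{0}$ with $c_{0}=\int_{\Omega}\frac{\alpha y_{\alpha}}{(y_{\alpha}+\alpha)^{2}}+\int_{\Omega}\frac{\alpha \xi_{\alpha}}{(\xi_{\alpha}+\alpha)^{2}}$. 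The decisive point is that we are in case (ii), where $\langle 1,\frac{y_{\alpha}}{y_{\alpha}+\alpha}+\frac{\xi_{\alpha}}{\xi_{\alpha}+\alpha}\rangle=Area(\Omega)$; by Lemma 3.2 this forces $y_{\alpha}\xi_{\alpha}=\alpha^{2}$ a.e., hence $y_{\alpha}>0$ and $\xi_{\alpha}>0$ a.e. and therefore $c_{0}>0$. Consequently $\Theta_{\varepsilon}(0,0)\le -c_{0}/2$ for all $\varepsilon$ small enough.

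The crux, and the place where $(\mathcal{H}_{1})$ is really needed, is the control of $\langle q_{\varepsilon},\Psi_{\varepsilon}\rangle$: the penalization only yields $\varepsilon\norme{q_{\varepsilon}}^{2}\le 2 j_{\alpha}$, so $q_{\varepsilon}$ may blow up in $L^{2}(\Omega)$ and the term cannot be handled by Cauchy--Schwarz. With $y=\xi=0$ one has $\Psi_{\varepsilon}=-f-v-\omega_{\varepsilon}$. Using $-(f+\omega_{\alpha})\in \mathrm{Int}_{\infty}(U_{ad})$, I would fix $\rho>0$ with $B_{\infty}(-(f+\omega_{\alpha}),\rho)\subset U_{ad}$ and set $v=-(f+\omega_{\alpha})+\rho\,\mathrm{sign}(q_{\varepsilon})\in U_{ad}$, so that $\Psi_{\varepsilon}=(\omega_{\alpha}-\omega_{\varepsilon})-\rho\,\mathrm{sign}(q_{\varepsilon})$ and, by $L^{1}$--$L^{\infty}$ duality, $\langle q_{\varepsilon},\Psi_{\varepsilon}\rangle\le \norme{q_{\varepsilon}}_{L^{1}}\norme{\omega_{\alpha}-\omega_{\varepsilon}}_{L^{\infty}}-\rho\norme{q_{\varepsilon}}_{L^{1}}$. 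Because $y_{\varepsilon}\to y_{\alpha}$ uniformly (Theorem 5.2) and $g,g'$ are continuous with $g'$ locally Lipschitz, $\omega_{\varepsilon}=g'(y_{\varepsilon})y_{\varepsilon}-g(y_{\varepsilon})\to\omega_{\alpha}$ in $L^{\infty}(\Omega)$; hence $\norme{\omega_{\alpha}-\omega_{\varepsilon}}_{L^{\infty}}\le \rho/2$ for $\varepsilon$ small, giving $\langle q_{\varepsilon},\Psi_{\varepsilon}\rangle\le -\tfrac{\rho}{2}\norme{q_{\varepsilon}}_{L^{1}}\le 0$. This absorption of the penalty gradient is the main obstacle, and the $L^{\infty}$-interior qualification is exactly what keeps $v$ feasible while the perturbation is aligned with $\mathrm{sign}(q_{\varepsilon})$.

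It remains to verify that $C_{(0,v,0)}$ stays bounded uniformly in $\varepsilon$ although $v$ now depends on $\varepsilon$. The only $v$-dependent contribution to the right-hand side of \eqref{eq1.36} is $\langle \nu(v_{\varepsilon}-v_{d})+v_{\varepsilon}-v_{\alpha},v-v_{\varepsilon}\rangle$, which is controlled since $\norme{v}_{L^{2}}\le \norme{f+\omega_{\alpha}}_{L^{2}}+\rho\sqrt{Area(\Omega)}$ is bounded and $v_{\varepsilon}\to v_{\alpha}$ in $L^{2}(\Omega)$; the remaining terms $-\langle p_{\varepsilon},A_{\varepsilon}y_{\varepsilon}\rangle$, $-\langle Ay_{\varepsilon},A(y_{\varepsilon}-y_{\alpha})\rangle$ and $-\langle \xi_{\varepsilon}-\xi_{\alpha},\xi_{\varepsilon}\rangle$ are bounded by Cauchy--Schwarz using $p_{\varepsilon}\to p_{\alpha}$ in $H^{1}_{0}(\Omega)$ (Corollary 5.1), $y_{\varepsilon}\to y_{\alpha}$ in $H^{2}(\Omega)$ and $\xi_{\varepsilon}\to\xi_{\alpha}$ in $L^{2}(\Omega)$ (Theorem 5.2), while $-\varepsilon\norme{q_{\varepsilon}}^{2}\le 0$ only helps. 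Combining the three ingredients gives $\tfrac{c_{0}}{2}\,r_{\varepsilon}\le C_{(0,v,0)}\le C$ with $C$ independent of $\varepsilon$; this uniform bound, together with $r_{\varepsilon}\ge 0$, provides a subsequence converging to some $r_{\alpha}\ge 0$.
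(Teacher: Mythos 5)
Your proposal is correct, and its skeleton coincides with the paper's proof: restrict attention to case (ii) (case (i) having already forced $r_{\varepsilon}=0$), use ($\mathcal{H}_{1}$) to get $\|\omega_{\varepsilon}-\omega_{\alpha}\|_{\infty}\to 0$ from the uniform convergence $y_{\varepsilon}\to y_{\alpha}$, test \eqref{eq1.36} with $y=0$, $\xi=0$ and a control close to $-(f+\omega_{\alpha})$, and bound the coefficient of $r_{\varepsilon}$ away from zero via Lemma 3.2 (equality in the relaxed constraint forces $y\xi=\alpha^{2}$, hence strict positivity). The one genuine difference is how the dangerous term $\langle q_{\varepsilon},\Psi_{\varepsilon}\rangle$ is neutralized: the paper chooses $v=-(f+\omega_{\varepsilon})$, which is feasible for small $\varepsilon$ precisely because $-(f+\omega_{\alpha})\in \mathrm{Int}_{\infty}(U_{ad})$ and $\omega_{\varepsilon}\to\omega_{\alpha}$ uniformly, and this makes $\Psi_{\varepsilon}\equiv 0$ so the term vanishes identically; you instead center at $-(f+\omega_{\alpha})$ and add the sign-aligned perturbation $\rho\,\mathrm{sign}(q_{\varepsilon})$, obtaining $\langle q_{\varepsilon},\Psi_{\varepsilon}\rangle\le-\tfrac{\rho}{2}\|q_{\varepsilon}\|_{L^{1}}\le 0$. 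The paper's choice is marginally simpler; yours buys more, namely a uniform $L^{1}(\Omega)$ bound on $q_{\varepsilon}$ as a by-product, which anticipates what the paper only gets later through the separate hypothesis ($\mathcal{H}_{2}$) in Theorem 6.2. Two small points of care in your version: take the perturbation radius strictly smaller than $\rho$ (e.g.\ $\rho/2$) so that feasibility does not depend on whether $\mathcal{B}_{\infty}(-(f+\omega_{\alpha}),\rho)\subset U_{ad}$ is meant with a closed or open ball, and note that your dominated-convergence step for the $\xi_{\varepsilon}$ integral uses a.e.\ convergence, which the strong $L^{2}$ convergence of Theorem 5.2 supplies only along a subsequence (harmless here, since the limit is unique).
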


	  \begin{Proof}
	
	We have already mentioned that $ r_{\alpha}=0 $ when  $ \langle\ 1, \frac{y_{\varepsilon}}{y_{\varepsilon}+\alpha}+\frac{\xi_{\varepsilon}}{\xi_{\varepsilon}+\alpha} \rangle < Area(\Omega).$ In the other case, as $ g^{'} $ is locally lipschitz continuous, then $ \omega_{\varepsilon} $ uniformly converges to $ \omega_{\alpha} $ on $\bar{\Omega}  $. \\
	Indeed, we have proved that $ y_{\varepsilon} $ uniformly converges to $ y_{\alpha} $. Therefore, there exists $ \varepsilon_{0} > 0$ such that $ y_{\varepsilon}-y_{\alpha} $ remains in a bounded subset of $ \R^{n} $ independently of $ \varepsilon \in ]0, \varepsilon_{0}[$. The local lipschitz continuity of $ g^{'} $ yields 
	\begin{center}
	  $  \vert g^{'}(y_{\varepsilon}(x))-g^{'}(y_{\alpha}(x)) \vert  \leq  M\vert y_{\varepsilon}(x)-y_{\alpha}(x)\vert \leq M\mid\mid y_{\varepsilon}-y_{\alpha}\mid\mid_{\infty},~~  \forall x \in \Omega     $ 
	  \end{center}  
	
	where $ M $ is a constant that does not depend of $\varepsilon  $. Thus $ \mid\mid g^{'}(y_{\varepsilon})-g^{'}(y_{\alpha})\mid\mid_{\infty}  \to 0$. As 
	\begin{center}
	 $\vert  g^{'}(y_{\varepsilon})y_{\varepsilon}-g^{'}(y_{\alpha})y_{\alpha} \vert \leq \vert g^{'}(y_{\varepsilon}) \vert \vert y_{\varepsilon}-y_{\alpha} \vert + \vert  g^{'}(y_{\varepsilon})-g^{'}(y_{\alpha})\vert \vert y_{\alpha}\vert, $
	\end{center}
	we get 
	\begin{center}
	$  \mid\mid g^{'}(y_{\varepsilon})y_{\varepsilon}-g^{'}(y_{\alpha})y_{\alpha}\mid\mid_{\infty} \leq M\mid\mid y_{\varepsilon}-y_{\alpha}\mid\mid_{\infty}+\mid\mid g^{'}(y_{\epsilon})-g^{'}(y_{\alpha})\mid\mid_{\infty}\mid\mid y_{\alpha}\mid\mid_{\infty} \to 0.  $
	\end{center}
	Similarly $\mid\mid g(y_{\varepsilon})-g(y_{\alpha}) \mid\mid_{\infty} \to 0$. As we supposed $  -(f+\omega_{\alpha})\in \text{Int}_{\infty}(U_{ad})$, then $  -(f+\omega_{\varepsilon})\in U_{ad}$ for  $\varepsilon$ smaller than some 
	$\varepsilon_{0} > 0 $. \\ 
	
	Now, we choose $  y=0, v=-(f+\omega_{\varepsilon})$  and $ \xi=0 $ in relation \eqref{eq1.36}. We obtain 
	\begin{center}
	$ \forall \varepsilon \leq \varepsilon_{0}  ~~~~~  r_{\varepsilon} \biggl(  \langle\ \frac{\alpha}{(y_{\varepsilon}+\alpha)^{2}} ,y_{\varepsilon}  \rangle + \langle\ \frac{\alpha}{(\xi_{\varepsilon}+\alpha)^{2}}, \xi_{\varepsilon} \rangle \biggr) \leq C, $
	\end{center}
	where $ C $ is independent of $ \varepsilon $ since $ \omega_{\varepsilon} $ is uniformly bounded with respect to $\varepsilon $, for $ \varepsilon \in ]0, \varepsilon_{0}[.$ \\

	\vspace{0.4cm}
	We still need  to proof that :
	\begin{center}
	$  \langle\ \frac{\alpha}{(y_{\varepsilon}+\alpha)^{2}} ,y_{\varepsilon}  \rangle + \langle\ \frac{\alpha}{(\xi_{\varepsilon}+\alpha)^{2}}, \xi_{\varepsilon} \rangle \neq 0,~~é  \forall \alpha >0 $ and $ \varepsilon \to 0 .$
	\end{center}
	As 
	\begin{center}
	$  \langle\ 1, \frac{y_{\varepsilon}}{y_{\varepsilon}+\alpha}+\frac{\xi_{\varepsilon}}{\xi_{\varepsilon}+\alpha} \rangle =Area(\Omega),$  
	\end{center}
	So we have :
	\begin{center}
	$  \frac{y_{\varepsilon}}{y_{\varepsilon}+\alpha}+\frac{\xi_{\varepsilon}}{\xi_{\varepsilon}+\alpha} =1, $  $ ~\text{a.e.} ~ \text{in} ~ \Omega, $
	\end{center}
	in view of section 3 we obtain 
	\begin{center}
	$  \frac{y_{\varepsilon}}{y_{\varepsilon}+\alpha}+\frac{\xi_{\varepsilon}}{\xi_{\varepsilon}+\alpha} =1$ $  \Longleftrightarrow y_{\varepsilon}\xi_{\varepsilon}=\alpha^{2}\Longrightarrow  \langle\ y_{\varepsilon},\xi_{\varepsilon} \rangle = \alpha^{2}Area(\Omega)   $  $ ~~ \text{a.e.} ~~\text{in} ~~   \Omega$.
	\end{center}
	\vspace{0.2cm}
	Therefore, the set $ \{x\in \Omega , / y_{\varepsilon}(x)\neq 0 ,~ \xi_{\varepsilon}(x)\neq 0\} $ is not empty,  and the set $ \{x\in \Omega , / y_{\varepsilon}(x)= 0,~ \xi_{\varepsilon}(x)= 0\} $ is empty when $ \varepsilon $ goes to $ 0 $, since $ \alpha $ is fixed. Hence we obtain : 
	\begin{center}
	$  \langle\ \frac{\alpha}{(y_{\varepsilon}+\alpha)^{2}} ,y_{\varepsilon}  \rangle + \langle\ \frac{\alpha}{(\xi_{\varepsilon}+\alpha)^{2}}, \xi_{\varepsilon} \rangle \neq 0, \forall \alpha >0. $ 
	\end{center}

	\vspace{0.4cm}
	Finally, the passage to limit as $ \varepsilon \to 0 $ gives : 
	\begin{center}
	$  \langle\ \frac{\alpha}{(y_{\alpha}+\alpha)^{2}} ,y_{\alpha}  \rangle + \langle\ \frac{\alpha}{(\xi_{\alpha}+\alpha)^{2}}, \xi_{\alpha} \rangle \neq 0, \forall \alpha >0. $
\end{center}
	
	\end{Proof}

	Once we have the previous estimate, relation \eqref{eq1.36} becomes : 
	\begin{equation} \label{eq1.37}		
	\forall(y,v,\xi) \in \tilde{K}\times U_{ad}\times V_{ad}   ~~~~   -\langle\ q_{\varepsilon},Ay+g^{'}(y_{\varepsilon})y-f-v-\xi-\omega_{\varepsilon} \rangle  ~ \leq ~C_{(y, v, \xi)}.  
	\end{equation}
	
	Then we have to do another assumption to get the estimation of $ q_{\varepsilon}$ : 
	\begin{center}
	$ \exists p \in [1, +\infty], ~~  \exists \varepsilon_{0}>0, ~~ \exists \rho>0$ \\
	  \vspace{0.2cm}
$ \forall \varepsilon \in ]0, \varepsilon_{0}[,~~  $  $\forall \chi \in L^{p}(\Omega)$ such that $\mid\mid \chi\mid\mid_{L^{p}(\Omega)} \leq 1$, \\ 
  \vspace{0.2cm}
$ \exists (y_{\chi}^{\varepsilon},v_{\chi}^{\varepsilon},\xi_{\chi}^{\varepsilon}) $ bounded in $ \tilde{K} \times U_{ad} \times V_{ad} $ (uniformly with respect to $ \chi $ and $\varepsilon $),$ ~~~ $ \hspace{15mm} $(\mathcal{H}_{2})  $ \V\\
  \vspace{0.2cm}
such that $ Ay_{\chi}^{\varepsilon}+g^{'}(y_{\varepsilon})y_{\chi}^{\varepsilon}=f+\omega_{\varepsilon}+v_{\chi}^{\varepsilon}+\xi_{\chi}^{\varepsilon}-\rho\chi $ in $ \Omega $. 
	\end{center}
	Then we may conclude : \\

	\begin{Theorem} 
	Assume ($ \mathcal{H}_{1}) $ and ($ \mathcal{H}_{2} $), then $ q_{\varepsilon} $ is bounded in   $L^{p^{'}}(\Omega)$ by a constant independent of  $ \varepsilon $ \\ $( \text{here} \quad \frac{1}{p}+\frac{1}{p^{'}} =1)$.

	\end{Theorem}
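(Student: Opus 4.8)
The plan is to convert the one-sided estimate \eqref{eq1.37} into a two-sided bound on $q_\varepsilon$ by feeding it the family of test triples produced by $(\mathcal{H}_2)$. First I would rewrite \eqref{eq1.37} using $Ay+g'(y_\varepsilon)y=A_\varepsilon y$, so that for every $(y,v,\xi)\in\tilde{K}\times U_{ad}\times V_{ad}$ one has $-\langle q_\varepsilon,\,A_\varepsilon y-f-v-\xi-\omega_\varepsilon\rangle\leq C_{(y,v,\xi)}$, where $C_{(y,v,\xi)}$ is the right-hand side obtained in the derivation of \eqref{eq1.36} after discarding the nonpositive term $-\varepsilon\|q_\varepsilon\|_{2}^{2}$ (and invoking Theorem 6.1 to control $r_\varepsilon$, which is what turned \eqref{eq1.36} into \eqref{eq1.37}).

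Next, I would fix $\varepsilon\in\,]0,\varepsilon_0[$ and an arbitrary $\chi\in L^p(\Omega)$ with $\|\chi\|_{L^p(\Omega)}\leq 1$, and apply $(\mathcal{H}_2)$ to obtain a triple $(y_\chi^\varepsilon,v_\chi^\varepsilon,\xi_\chi^\varepsilon)\in\tilde{K}\times U_{ad}\times V_{ad}$, bounded uniformly in $\chi$ and $\varepsilon$, satisfying $A_\varepsilon y_\chi^\varepsilon-f-v_\chi^\varepsilon-\xi_\chi^\varepsilon-\omega_\varepsilon=-\rho\chi$. Note that this relation in particular forces $\rho\chi\in L^2(\Omega)$, so the pairing below is a genuine $L^2$-scalar product. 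Substituting this admissible triple into the rewritten \eqref{eq1.37} collapses the left-hand side to $-\langle q_\varepsilon,-\rho\chi\rangle=\rho\langle q_\varepsilon,\chi\rangle$, whence $\rho\langle q_\varepsilon,\chi\rangle\leq C_{(y_\chi^\varepsilon,v_\chi^\varepsilon,\xi_\chi^\varepsilon)}$.

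The crux of the argument, and the step I expect to require the most care, is to verify that $C_{(y_\chi^\varepsilon,v_\chi^\varepsilon,\xi_\chi^\varepsilon)}$ is bounded by a constant $\tilde{C}$ independent of both $\varepsilon$ and $\chi$. This is exactly where the uniform boundedness built into $(\mathcal{H}_2)$ is indispensable: each summand of $C_{(y,v,\xi)}$ is an $L^2$-pairing of uniformly bounded quantities, namely $p_\varepsilon$ (bounded since $p_\varepsilon\to p_\alpha$, Corollary 5.1), $Ay_\varepsilon$, $g'(y_\varepsilon)$ (bounded in $L^\infty$ by the argument of Corollary 5.1), $v_\varepsilon,\xi_\varepsilon,y_\alpha,v_\alpha,\xi_\alpha$ (bounded by Theorem 5.2), tested against the uniformly bounded triple $(y_\chi^\varepsilon,v_\chi^\varepsilon,\xi_\chi^\varepsilon)$; since $y_\chi^\varepsilon$ is bounded in $\tilde{K}\subset H^2(\Omega)\cap H_0^1(\Omega)$, the terms $\langle Ay, A(y_\varepsilon-y_\alpha)\rangle$ and $\langle p_\varepsilon,A_\varepsilon(y-y_\varepsilon)\rangle$ stay bounded as well, and the dropped term is $\leq 0$. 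Hence $\rho\langle q_\varepsilon,\chi\rangle\leq\tilde{C}$.

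Finally, since $-\chi$ also lies in the unit ball of $L^p(\Omega)$, the same estimate gives $|\langle q_\varepsilon,\chi\rangle|\leq\tilde{C}/\rho$ for every admissible $\chi$. Taking the supremum over the unit ball of $L^p(\Omega)$ and invoking the converse Hölder characterization of the dual norm, $\|q_\varepsilon\|_{L^{p'}(\Omega)}=\sup_{\|\chi\|_{L^p(\Omega)}\leq 1}\langle q_\varepsilon,\chi\rangle$ (with the case $p=\infty$, $p'=1$ handled directly since $q_\varepsilon\in L^2(\Omega)\subset L^1(\Omega)$ on the bounded set $\Omega$), yields $\|q_\varepsilon\|_{L^{p'}(\Omega)}\leq\tilde{C}/\rho$, a bound independent of $\varepsilon$, which is the claim.
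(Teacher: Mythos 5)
Your proposal is correct and follows essentially the same route as the paper: the paper's proof consists precisely of applying relation \eqref{eq1.37} to the test triple $(y_{\chi}^{\varepsilon},v_{\chi}^{\varepsilon},\xi_{\chi}^{\varepsilon})$ furnished by $(\mathcal{H}_{2})$, obtaining $\rho\langle q_{\varepsilon},\chi\rangle\leq C_{\chi,\varepsilon}\leq C$ uniformly over the unit ball of $L^{p}(\Omega)$, and concluding by duality. You merely make explicit the points the paper leaves implicit --- that the uniform boundedness in $(\mathcal{H}_{2})$ is what keeps the constant $C_{(y,v,\xi)}$ independent of $\chi$ and $\varepsilon$, and that the supremum over the symmetric unit ball yields the $L^{p'}(\Omega)$ bound.
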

	
	\begin{Proof}
($ \mathcal{H}_{2} $) and relation \eqref{eq1.37} when applied with $ (y_{\chi}^{\epsilon},v_{\chi}^{\varepsilon},\xi_{\chi}^{\varepsilon}) $ give : \\
	
	\begin{center}
	$ \forall \chi \in L^{p}(\Omega) ,~~$ $\mid\mid \chi \mid\mid_{L^{p}(\Omega)} \leq 1,~~~$  $ \rho \langle\ q_{\varepsilon}, \chi \rangle \leq C_{\chi, \varepsilon} \leq C. $
	\end{center}
	\end{Proof}
	Then we may pass to the limit in the penalized optimality system and obtain the following result. 
	\begin{Theorem} 
	
	Assume $( \mathcal{H}_{1}) $ and $( \mathcal{H}_{2}), $ if  $~(y_{\alpha},v_{\alpha},\xi_{\alpha})$ is a solution of $( \mathcal{P}^{\alpha})$, then Lagrange multipliers $(q_{\alpha},r_{\alpha})\in L^{p^{'}}(\Omega)\times \R^{+} $ exist, such that 
	
	\begin{equation} \label{eq1.38}	
	 \forall y\in \tilde{K}, ~~ [A+g^{'}(y_{\alpha})](y-y_{\alpha})\in L^{p}(\Omega) \quad  \langle\ p_{\alpha}+q_{\alpha},[A+g^{'}(y_{\alpha})](y-y_{\alpha}) \rangle  + r_{\alpha}\langle\ \frac{\alpha}{(y_{\alpha}+\alpha)^{2}}, y-y_{\alpha} \rangle \geq 0,  
	\end{equation}
	
	\begin{equation} \label{eq1.39}	
	 \forall v \in U_{ad}, ~~  v-v_{\alpha} \in L^{p}(\Omega) \quad    \langle\ \nu(v_{\alpha}-v_{d})-q_{\alpha}, v-v_{\alpha} \rangle \geq 0,
		\end{equation}
	\begin{equation} \label{eq1.40}	
	 \forall \xi \in V_{ad},  ~~ \xi-\xi_{\alpha} \in L^{p}(\Omega)\quad  \langle\ \frac{r_{\alpha}\alpha}{(\xi_{\alpha}+\alpha)^{2}}-q_{\alpha}, \xi-\xi_{\alpha} \rangle \geq 0, 
		\end{equation}
	\begin{equation} \label{eq1.41}	
	 r_{\alpha} \biggl( \langle\ 1, \frac{y_{\alpha}}{y_{\alpha}+\alpha}+\frac{\xi_{\alpha}}{\xi_{\alpha}+\alpha} \rangle -Area(\Omega) \biggr) =0,
		\end{equation}
	
where $ p_{\alpha} $ is given by \eqref{eq1.24}.	
	\end{Theorem}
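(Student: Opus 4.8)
The plan is to obtain the limit optimality system \eqref{eq1.38}--\eqref{eq1.41} by passing to the limit $\varepsilon\to 0$ in the penalized optimality system of Theorem 5.3, that is \eqref{eq1.31}--\eqref{eq1.34}, exploiting the a priori bounds on the penalized multipliers furnished by Theorems 6.1 and 6.2 and the strong convergences of states and adjoint states established in Theorem 5.2 and Corollary 5.1. The pair $(q_\alpha,r_\alpha)$ will be produced as a weak/subsequential limit of $(q_\varepsilon,r_\varepsilon)$.

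First I would fix a sequence $\varepsilon\searrow 0$ and extract (along a common subsequence) limits of the multipliers. By Theorem 6.1, $r_\varepsilon$ is bounded in $\R^{+}$, so $r_\varepsilon\to r_\alpha\geq 0$. By Theorem 6.2, $q_\varepsilon$ is bounded in $L^{p'}(\Omega)$, whence, by weak (respectively weak-$\ast$ when $p'=\infty$) sequential compactness of bounded sets in this space, $q_\varepsilon\rightharpoonup q_\alpha$ in $L^{p'}(\Omega)$. Simultaneously I recall from Theorem 5.2 that $y_\varepsilon\to y_\alpha$ strongly in $H^2(\Omega)\cap H_0^1(\Omega)$, hence uniformly on $\bar{\Omega}$, so that $g^{'}(y_\varepsilon)\to g^{'}(y_\alpha)$ in $L^\infty(\Omega)$ under $(\mathcal{H}_1)$, and that $v_\varepsilon\to v_\alpha$, $\xi_\varepsilon\to\xi_\alpha$ strongly in $L^2(\Omega)$, while Corollary 5.1 gives $p_\varepsilon\to p_\alpha$ strongly in $H_0^1(\Omega)$.

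Next I would pass to the limit term by term. In \eqref{eq1.32} the factor $\nu(v_\varepsilon-v_d)+v_\varepsilon-v_\alpha$ converges strongly in $L^2(\Omega)$ to $\nu(v_\alpha-v_d)$ since $v_\varepsilon-v_\alpha\to 0$; restricting the test controls to those with $v-v_\alpha\in L^p(\Omega)$ lets me pair the weakly convergent $q_\varepsilon$ with the strongly convergent $v-v_\varepsilon$, yielding \eqref{eq1.39}. The identical argument for \eqref{eq1.33}, where $\xi_\varepsilon-\xi_\alpha\to 0$ and $\tfrac{\alpha}{(\xi_\alpha+\alpha)^2}$ is fixed, gives \eqref{eq1.40}. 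In \eqref{eq1.31} the purely penalization-induced term $\langle\ A(y-y_\varepsilon),A(y_\varepsilon-y_\alpha)\rangle$ tends to $0$ because $A(y_\varepsilon-y_\alpha)\to 0$ in $L^2(\Omega)$; the image $A_\varepsilon(y-y_\varepsilon)=[A+g^{'}(y_\varepsilon)](y-y_\varepsilon)$ converges strongly (in $L^p(\Omega)$ for admissible $y$) to $[A+g^{'}(y_\alpha)](y-y_\alpha)$, so its pairing with $p_\varepsilon+q_\varepsilon$ passes to $\langle\ p_\alpha+q_\alpha,[A+g^{'}(y_\alpha)](y-y_\alpha)\rangle$, and $r_\varepsilon\langle\ y-y_\varepsilon,\tfrac{\alpha}{(y_\varepsilon+\alpha)^2}\rangle\to r_\alpha\langle\ y-y_\alpha,\tfrac{\alpha}{(y_\alpha+\alpha)^2}\rangle$, giving \eqref{eq1.38}. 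Finally \eqref{eq1.41} follows from \eqref{eq1.34} upon noting $r_\varepsilon\to r_\alpha$ and that $\langle\ 1,\tfrac{y_\varepsilon}{y_\varepsilon+\alpha}+\tfrac{\xi_\varepsilon}{\xi_\varepsilon+\alpha}\rangle\to\langle\ 1,\tfrac{y_\alpha}{y_\alpha+\alpha}+\tfrac{\xi_\alpha}{\xi_\alpha+\alpha}\rangle$, exactly as in the proof of Theorem 5.2.

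The main obstacle I expect is the passage to the limit in every product containing $q_\varepsilon$, since $q_\varepsilon$ converges only weakly in $L^{p'}(\Omega)$: each such pairing survives the limit only if its companion factor converges \emph{strongly} in $L^p(\Omega)$. This is precisely the purpose of restricting the test elements $y,v,\xi$ to those for which $[A+g^{'}(y_\alpha)](y-y_\alpha)$, $v-v_\alpha$ and $\xi-\xi_\alpha$ lie in $L^p(\Omega)$. The delicate point is thus the strong $L^p(\Omega)$-convergence of $A_\varepsilon(y-y_\varepsilon)$ to $[A+g^{'}(y_\alpha)](y-y_\alpha)$, which I would establish by writing $A_\varepsilon(y-y_\varepsilon)-[A+g^{'}(y_\alpha)](y-y_\alpha)=A(y_\alpha-y_\varepsilon)+g^{'}(y_\varepsilon)(y-y_\varepsilon)-g^{'}(y_\alpha)(y-y_\alpha)$ and bounding each piece by the strong $H^2$-convergence of $y_\varepsilon$ together with the uniform convergence $\mid\mid g^{'}(y_\varepsilon)-g^{'}(y_\alpha)\mid\mid_{\infty}\to 0$ provided by $(\mathcal{H}_1)$.
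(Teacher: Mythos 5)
Your proposal is correct and is exactly the argument the paper intends: the paper gives no detailed proof of this theorem at all, stating only that ``we may pass to the limit in the penalized optimality system,'' which is precisely what you carry out using the bounds of Theorems 6.1--6.2 and the convergences of Theorem 5.2 and Corollary 5.1. Your write-up in fact supplies the details the paper leaves implicit, including the key observation that every pairing with the only weakly convergent $q_{\varepsilon}$ requires strong $L^{p}(\Omega)$-convergence of its companion factor, which is the reason for the restrictions $[A+g^{'}(y_{\alpha})](y-y_{\alpha}),\ v-v_{\alpha},\ \xi-\xi_{\alpha}\in L^{p}(\Omega)$ in the statement.
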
 
	\subsection{ Sufficient condition for $ (\mathcal{H}_{2} )$ with p=2. }
	
	In this subsection we give an assumption dealing with $ (y_{\alpha}, v_{\alpha}, \xi_{\alpha}) $ where $ \varepsilon $ does not appear. We choose $ p=2 $ because it is the most useful case. We always assume that $ g^{'} $ is locally lipschitz continuous (for example $ g $ is $ \mathcal{C}^{2}$), and we set the following 
	\begin{center}
	$  \exists \rho >0, ~\exists v_{0} \in \text{Int}_{\infty}(U_{ad}),~ \forall \mathcal{X}\in L^{2}(\Omega)$ such that $  \mid\mid \mathcal{X} \mid\mid_{L^{2}(\Omega)} \leq 1,$ \\
	  \vspace{0.2cm}
	$\exists(y_{\mathcal{X}},\xi_{\mathcal{X}}) \in \tilde{K} \times V_{ad}$ (uniformly bounded by a constant M independent of $\mathcal{X}  $ ),$ ~~~~ $ \hspace{1mm} $(\mathcal{H}_{3})  $ \V \\
	  \vspace{0.2cm}
	such that $ Ay_{\mathcal{X}}+g^{'}(y_{\alpha})y_{\mathcal{X}} = f+\omega_{\alpha}+v_{0}+\xi_{\mathcal{X}}-\rho\mathcal{X} $ in $ \Omega $.  
	\end{center}

	\begin{Proposition}
	If $ g^{'} $ is locally lipschitz continuous then ($ \mathcal{H}_{3}) \Longrightarrow (\mathcal{H}_{2}$).
	\end{Proposition}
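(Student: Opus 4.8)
The plan is to manufacture, for every admissible perturbation $\chi$, a triple that is feasible for the $\varepsilon$-level equation of $(\mathcal{H}_2)$ by \emph{reusing} the state and the $\xi$-component delivered by $(\mathcal{H}_3)$ and by shifting the whole discrepancy between the two linearizations onto the control $v$. This avoids solving any new partial differential equation.

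First I would fix $p=2$ and, given $\chi\in L^2(\Omega)$ with $\|\chi\|_{L^2(\Omega)}\le 1$, apply $(\mathcal{H}_3)$ with $\mathcal{X}=\chi$ to obtain $(y_\chi,\xi_\chi)\in\tilde K\times V_{ad}$, bounded by $M$ independently of $\chi$, solving
\[
A y_\chi + g'(y_\alpha)\,y_\chi = f+\omega_\alpha+v_0+\xi_\chi-\rho\chi .
\]
I would then set $y_\chi^\varepsilon:=y_\chi$, $\xi_\chi^\varepsilon:=\xi_\chi$ and define the control by
\[
v_\chi^\varepsilon := v_0 + \bigl(g'(y_\varepsilon)-g'(y_\alpha)\bigr)y_\chi - (\omega_\varepsilon-\omega_\alpha).
\]
Substituting this triple into the left-hand side of the $\varepsilon$-equation of $(\mathcal{H}_2)$ and subtracting the displayed $(\mathcal{H}_3)$ identity, all the $Ay_\chi$, $\xi_\chi$, $f$ and $\rho\chi$ terms cancel, and the definition of $v_\chi^\varepsilon$ is precisely what is required to reconcile $g'(y_\varepsilon)y_\chi$ with $g'(y_\alpha)y_\chi$ and $\omega_\varepsilon$ with $\omega_\alpha$; hence the required state equation holds by construction, with the same $\rho$.

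The substantive content lies in the feasibility and uniform-boundedness checks for $v_\chi^\varepsilon$. For admissibility I would invoke the uniform estimates already obtained in the proof of Theorem~6.1: since $y_\varepsilon\to y_\alpha$ uniformly on $\bar\Omega$ (Theorem~5.2 together with $H^2(\Omega)\cap H_0^1(\Omega)\hookrightarrow \mathcal{C}(\bar\Omega)$) and $g'$ is locally Lipschitz, one has $\|g'(y_\varepsilon)-g'(y_\alpha)\|_{L^\infty(\Omega)}\to 0$ and $\|\omega_\varepsilon-\omega_\alpha\|_{L^\infty(\Omega)}\to 0$. Combined with $\|y_\chi\|_{L^\infty(\Omega)}\le M$, this gives
\[
\|v_\chi^\varepsilon-v_0\|_{L^\infty(\Omega)} \le M\,\|g'(y_\varepsilon)-g'(y_\alpha)\|_{L^\infty(\Omega)} + \|\omega_\varepsilon-\omega_\alpha\|_{L^\infty(\Omega)} \longrightarrow 0
\]
uniformly in $\chi$. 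Because $v_0\in\mathrm{Int}_\infty(U_{ad})$, an $L^\infty$-ball around $v_0$ lies inside $U_{ad}$, so there exists $\varepsilon_0>0$ with $v_\chi^\varepsilon\in U_{ad}$ for all $\varepsilon\in\,]0,\varepsilon_0[$ and all such $\chi$. The same estimate bounds $v_\chi^\varepsilon$ in $L^2(\Omega)$ uniformly in $\chi$ and $\varepsilon$, while $y_\chi^\varepsilon$ and $\xi_\chi^\varepsilon$ inherit the bound $M$ from $(\mathcal{H}_3)$; this establishes every requirement of $(\mathcal{H}_2)$ with $p=2$.

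I expect the control-admissibility step to be the main obstacle: everything hinges on transmitting the correction term in $L^\infty$ rather than merely in $L^2$, which is exactly why the $L^\infty$-interior hypothesis on $U_{ad}$ and the \emph{uniform} convergence $y_\varepsilon\to y_\alpha$ are indispensable. The local Lipschitz continuity of $g'$ is the ingredient that upgrades the uniform convergence of the states into uniform convergence of $g'(y_\varepsilon)$ and of $\omega_\varepsilon$; without it one would control these differences only in $L^2$, and the interior argument placing $v_\chi^\varepsilon$ in $U_{ad}$ would break down.
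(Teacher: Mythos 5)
Your proposal is correct and follows essentially the same route as the paper: you reuse the pair $(y_{\mathcal{X}},\xi_{\mathcal{X}})$ from $(\mathcal{H}_{3})$, absorb the discrepancy $(g'(y_{\varepsilon})-g'(y_{\alpha}))y_{\mathcal{X}}+\omega_{\alpha}-\omega_{\varepsilon}$ into the control, and justify admissibility via the $L^{\infty}$-interior of $U_{ad}$ together with the uniform convergences $\|g'(y_{\varepsilon})-g'(y_{\alpha})\|_{\infty}\to 0$ and $\|\omega_{\varepsilon}-\omega_{\alpha}\|_{\infty}\to 0$, exactly as in the paper's proof. The only cosmetic difference is that you state the cancellation argument by subtracting the two equations rather than expanding the left-hand side, which is the same computation.
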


	\begin{Proof}
	
	We have seen that $ \mid\mid y_{\varepsilon}-y_{\alpha} \mid\mid_{\infty} \to 0, $ 
	 $\mid\mid g^{'}(y_{\varepsilon})-g^{'}(y_{\varepsilon})\mid\mid_{\infty} \to 0 $ and $ \mid\mid \omega_{\varepsilon}-\omega_{\alpha} \mid\mid_{\infty} \to 0.$ \\
	Let be $\mathcal{X} \in L^{2}(\Omega)$ such taht $ \mid\mid \mathcal{X} \mid\mid_{2} \leq 1 $ and $(y_{\mathcal{X}},v_{0},\xi_{\mathcal{X}}) \in \tilde{K} \times \text{Int}_{\infty}(U_{ad}) \times V_{ad} $ given by ($ \mathcal{H}_{3}$). As $ v_{0} \in \text{Int}_{\infty}(U_{ad}) $, there exists $ \rho_{0} >0 $ such that $ \mathcal{B}_{\infty}(v_{0}, \rho) \subset U_{ad} $. As $ y_{\mathcal{X}} $ is bounded by $ M $, then for $ \varepsilon $ small enough (less than some $ \varepsilon_{0} >0 $), we get 
	\begin{center}
	  $ \mid\mid \omega_{\alpha}-\omega_{\varepsilon}+(g^{'}(y_{\varepsilon})-g^{'}(y_{\alpha}))y_{\mathcal{X}} \mid\mid_{\infty}$ $ \leq \mid\mid   \omega_{\alpha}-\omega_{\varepsilon}  \mid\mid_{\infty} +\mid\mid g^{'}(y_{\varepsilon})-g^{'}(y_{\alpha}) \mid\mid_{\infty} \mid\mid  y_{\mathcal{X}} \mid\mid_{\infty} \leq \rho_{0},   $
	  \end{center}  
	
therefore $ v_{\mathcal{X}}^{\varepsilon} = v_{0}+(g^{'}(y_{\varepsilon})-g^{'}(y_{\alpha}))y_{\mathcal{X}}+\omega_{\alpha}-\omega_{\varepsilon} $ belongs to $ U_{ad} $ and 
\begin{center}
	 $ \mid\mid    v_{\mathcal{X}}^{\varepsilon} \mid\mid_{2}~ \leq ~	 \mid\mid    v_{0} \mid\mid_{2} +\mid\mid   \omega_{\alpha}-\omega_{\varepsilon} \mid\mid_{2} +  \mid\mid    (g^{'}(y_{\varepsilon})-g^{'}(y_{\alpha}))y_{\mathcal{X}} \mid\mid_{2} ~ \leq  ~C,$ 
	\end{center}	
	$ v_{\mathcal{X}}^{\varepsilon} $ is $ L^{2}\text{-bounded} $ independently of $ \mathcal{X} $ and $ \varepsilon $. Now, we set $ y_{\mathcal{X}}^{\varepsilon}=y_{\mathcal{X}} \in \tilde{K} $ and $ \xi_{\mathcal{X}}^{\varepsilon}=\xi_{\mathcal{X}} \in V_{ad} $ to obtain
	 
\begin{equation} \label{eq1.42}
\begin{split}
Ay_{\mathcal{X}}^{\varepsilon}+g^{'}(y_{\varepsilon})y_{\mathcal{X}}^{\varepsilon} & = Ay_{\mathcal{X}} +g^{'}(y_{\alpha)})y_{\mathcal{X}}+(g^{'}(y_{\varepsilon})-g^{'}(y_{\alpha}))y_{\mathcal{X}}  \\
 & = f+\omega_{\alpha}+v_{0}+\xi_{\mathcal{X}}-\rho \mathcal{X}+(g^{'}(y_{\varepsilon})-g^{'}(y_{\alpha}))y_{\mathcal{X}} \\
  & = f+\omega_{\varepsilon}+v_{0}+ (g^{'}(y_{\varepsilon})-g^{'}(y_{\alpha}))y_{\mathcal{X}}+\omega_{\alpha}-\omega_{\varepsilon} +\xi_{\mathcal{X}} -\rho \mathcal{X} \\
  & = f+\omega_{\varepsilon}+v_{\mathcal{X}}^{\varepsilon}+\xi_{\mathcal{X}}^{\varepsilon}-\rho \mathcal{X}. 
\end{split}
\end{equation}

	We can see that that ($ \mathcal{H}_{2})$ is satisfied.
	\end{Proof}

	An immediate consequence is the following Theorem: we get the existence of Lagrange multipliers:

	\begin{Theorem} 
	Let $ (y_{\alpha}, v_{\alpha}, \xi_{\alpha}) $ be a solution of ($ \mathcal{P^{\alpha}}$) and assume ($ \mathcal{H}_{1}$) and ($ \mathcal{H}_{3}$); then Lagrange multipliers \\$( q_{\alpha}, r_{\alpha}) $ $ \in L^{2}(\Omega) \times \R^{+} $ exist, such that 
	 
	\begin{equation} \label{eq1.43}
	 \forall y\in \tilde{K},  \quad     \langle\ p_{\alpha}+q_{\alpha},[A+g^{'}(y_{\alpha})](y-y_{\alpha}) \rangle  + r_{\alpha}\langle\ \frac{\alpha}{(y_{\alpha}+\alpha)^{2}}, y-y_{\alpha} \rangle \geq 0, 
	\end{equation}
	
	\begin{equation} \label{eq1.44}
	 \forall v \in U_{ad},    \quad   \langle\ \nu(v_{\alpha}-v_{d})-q_{\alpha}, v-v_{\alpha} \rangle \geq 0,  
	\end{equation}
	
	\begin{equation} \label{eq1.45}
	 \forall \xi \in V_{ad},   \quad   \langle\ \frac{r_{\alpha}\alpha}{(\xi_{\alpha}+\alpha)^{2}}-q_{\alpha}, \xi-\xi_{\alpha} \rangle \geq 0,
	\end{equation}
	
	\begin{equation} \label{eq1.46}
	 r_{\alpha} \biggl( \langle\ 1, \frac{y_{\alpha}}{y_{\alpha}+\alpha}+\frac{\xi_{\alpha}}{\xi_{\alpha}+\alpha} \rangle -Area(\Omega) \biggr) =0,
	\end{equation}
	
where $ p_{\alpha} $ is given by \eqref{eq1.24}.	

	\end{Theorem}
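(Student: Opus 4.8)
The plan is to obtain this theorem as a direct specialization of the general multiplier result (Theorem 6.3) to the exponent $p=2$, the only missing ingredient being the qualification hypothesis $(\mathcal{H}_2)$ that Theorem 6.3 takes as input. I would first invoke Proposition 6.1: since $g'$ is locally Lipschitz continuous (which is part of $(\mathcal{H}_1)$ and is explicitly assumed in this subsection), the hypothesis $(\mathcal{H}_3)$ implies $(\mathcal{H}_2)$ with $p=2$. Concretely, given $\mathcal{X}\in L^{2}(\Omega)$ with $\mid\mid \mathcal{X}\mid\mid_{L^{2}(\Omega)}\leq 1$ and the pair $(y_{\mathcal{X}},\xi_{\mathcal{X}})$ furnished by $(\mathcal{H}_3)$, the uniform convergences $\mid\mid y_{\varepsilon}-y_{\alpha}\mid\mid_{\infty}\to 0$, $\mid\mid g'(y_{\varepsilon})-g'(y_{\alpha})\mid\mid_{\infty}\to 0$ and $\mid\mid \omega_{\varepsilon}-\omega_{\alpha}\mid\mid_{\infty}\to 0$ allow me to correct the control into $v_{\mathcal{X}}^{\varepsilon}=v_{0}+(g'(y_{\varepsilon})-g'(y_{\alpha}))y_{\mathcal{X}}+\omega_{\alpha}-\omega_{\varepsilon}$, which stays in $U_{ad}$ and remains $L^{2}$-bounded uniformly in $\mathcal{X}$ and $\varepsilon$; setting $y_{\mathcal{X}}^{\varepsilon}=y_{\mathcal{X}}$ and $\xi_{\mathcal{X}}^{\varepsilon}=\xi_{\mathcal{X}}$ then realizes the state relation demanded by $(\mathcal{H}_2)$, as carried out in \eqref{eq1.42}.

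Once $(\mathcal{H}_2)$ is available with $p=2$, I would simply apply Theorem 6.3 with this exponent. Since $\frac{1}{p}+\frac{1}{p'}=1$ forces $p'=2$, the multipliers produced by that theorem belong to $L^{p'}(\Omega)\times\R^{+}=L^{2}(\Omega)\times\R^{+}$, and the optimality system \eqref{eq1.38}--\eqref{eq1.41} becomes verbatim the system \eqref{eq1.43}--\eqref{eq1.46} asserted here, with $p_{\alpha}$ still given by \eqref{eq1.24}. This establishes the theorem.

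For completeness I would recall where the real work behind Theorem 6.3 sits, since that is where any genuine difficulty lies. The multipliers $(q_{\alpha},r_{\alpha})$ are constructed by passing to the limit $\varepsilon\to 0$ in the penalized optimality system \eqref{eq1.31}--\eqref{eq1.34}, using two uniform estimates: the bound on $r_{\varepsilon}$ secured by $(\mathcal{H}_1)$ (Theorem 6.1) and the $L^{2}$-bound on $q_{\varepsilon}$ secured by $(\mathcal{H}_2)$ (Theorem 6.2). Together with the strong convergences $y_{\varepsilon}\to y_{\alpha}$ in $H^{2}(\Omega)\cap H_{0}^{1}(\Omega)$ and $(v_{\varepsilon},\xi_{\varepsilon})\to(v_{\alpha},\xi_{\alpha})$ in $L^{2}(\Omega)\times L^{2}(\Omega)$ (Theorem 5.2), the strong convergence $p_{\varepsilon}\to p_{\alpha}$ in $H_{0}^{1}(\Omega)$ (Corollary 5.1) and the uniform convergence $g'(y_{\varepsilon})\to g'(y_{\alpha})$, these bounds let me extract subsequences $q_{\varepsilon}\rightharpoonup q_{\alpha}$ weakly in $L^{2}(\Omega)$ and $r_{\varepsilon}\to r_{\alpha}$ in $\R^{+}$ and pass to the limit in each inequality. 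The main obstacle in the whole chain is thus not the present corollary but the $L^{2}$-estimate of $q_{\varepsilon}=\varepsilon^{-1}(Ay_{\varepsilon}+g(y_{\varepsilon})-f-v_{\varepsilon}-\xi_{\varepsilon})$ in \eqref{eq1.35}: this is exactly the role of the qualification $(\mathcal{H}_2)$, equivalently $(\mathcal{H}_3)$ here, without which the penalized gradients need not stay bounded and no limiting multiplier could be extracted.
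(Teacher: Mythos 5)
Your proposal is correct, and it proves the literal statement; but it is not quite what the paper's own proof does, so the two are worth contrasting. You treat the theorem as a pure specialization of earlier results: $(\mathcal{H}_{1})$ supplies the local Lipschitz continuity of $g'$, Proposition 6.1 then converts the assumed $(\mathcal{H}_{3})$ into $(\mathcal{H}_{2})$ with $p=2$, and Theorem 6.3 with $p'=2$ delivers $(q_{\alpha},r_{\alpha})\in L^{2}(\Omega)\times\R^{+}$ together with \eqref{eq1.38}--\eqref{eq1.41}. One small point you should make explicit for the word ``verbatim'' to be justified: for $p=2$ the side conditions in \eqref{eq1.38}--\eqref{eq1.40} (that $[A+g^{'}(y_{\alpha})](y-y_{\alpha})$, $v-v_{\alpha}$ and $\xi-\xi_{\alpha}$ lie in $L^{p}(\Omega)$) are automatically satisfied, since $\tilde{K}\subset H^{2}(\Omega)\cap H_{0}^{1}(\Omega)$, $g^{'}(y_{\alpha})\in L^{\infty}(\Omega)$, and $U_{ad}$, $V_{ad}$ sit in $L^{2}(\Omega)$; only then does the system collapse to \eqref{eq1.43}--\eqref{eq1.46}. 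The paper's proof, by contrast, spends essentially all of its effort on a constructive step you skip: it fixes $v_{0}=-(f+\omega_{\alpha})$, which belongs to $\text{Int}_{\infty}(U_{ad})$ precisely by $(\mathcal{H}_{1})$, sets $\xi_{\mathcal{X}}=\vert\mathcal{X}\vert\in V_{ad}$, and defines $y_{\mathcal{X}}$ as the solution of $[A+g^{'}(y_{\alpha})]y_{\mathcal{X}}=\xi_{\mathcal{X}}-\mathcal{X}=2\mathcal{X}^{-}\geq 0$; the maximum principle (applicable since $g^{'}\geq 0$) gives $y_{\mathcal{X}}\geq 0$, hence $y_{\mathcal{X}}\in\tilde{K}$, and elliptic regularity gives the uniform bound, so $(\mathcal{H}_{3})$ is realized with $\rho=1$, after which the conclusion follows exactly as in your argument. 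What the paper's route buys is the insight that $(\mathcal{H}_{3})$ is not merely an abstract hypothesis but is automatically satisfiable under $(\mathcal{H}_{1})$ via an explicit choice of data, and this same maximum-principle construction is what is recycled in the subsequent Corollary for linear $g$ with $-f\in U_{ad}$. What your route buys is brevity and fidelity to the statement as written, which assumes $(\mathcal{H}_{3})$ outright; your closing recapitulation of where the analytical weight sits (the uniform bounds on $r_{\varepsilon}$ and $q_{\varepsilon}$ and the limit passage in \eqref{eq1.31}--\eqref{eq1.34}) is accurate.

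One trivial slip: $q_{\varepsilon}$ is defined in \eqref{eq1.25}, not in \eqref{eq1.35}; the latter defines $\omega_{\varepsilon}$ and $\omega_{\alpha}$. This has no bearing on the validity of your argument.
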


	\begin{Proof}
	We take $ v_{0} = -(f+\omega_{\alpha}) $ to ensure $( \mathcal{H}_{3})$. Let $ \mathcal{X} \in L^{2}(\Omega) $ such that  $ \mid\mid \mathcal{X} \mid\mid_{L^{2}(\Omega)} \leq 1. $ \\ 
	We set  $\xi_{\mathcal{X}}=\mathcal{X}^{+}+\mathcal{X}^{-}= \vert \mathcal{X} \vert \geq 0,$ where $ \mathcal{X}^{+} $ = $ max(0, \mathcal{X}) ~$ and  $~ \mathcal{X}^{-} = $ max$(0, -\mathcal{X})$. As $ \mid\mid \mathcal{X} \mid\mid_{L^{2}(\Omega)} \leq 1,$ it is clear that $\xi_{\mathcal{X}} \in V_{ad}  $.
	
	\newpage
	
	 Let $y_{\mathcal{X}}  $ be the solution of  
	\begin{center}
	$  [A+g^{'}(y_{\alpha})]y_{\mathcal{X}}=$  $  \xi_{\mathcal{X}}-\mathcal{X}$ = $  2\mathcal{X}^{-} \geq 0 \quad (a.e.),~ y\in H_{0}^{1}(\Omega),$ 
	\end{center}
	thanks to the properties of  $ [A+g^{'}(y_{\alpha})] $ and the maximum principale, then $ y_{\mathcal{X}}  \geq 0$ a.e. in $ \Omega $. Therefore $ y_{\mathcal{X}} \in \tilde{K}$ and $ (\mathcal{H}_{3}) $ is satisfied (with $  \rho = 1$ ). The optimality system follows and we have proved that the multiplier $ q_{\alpha} $ is a $ L^{2}(\Omega) $-function. 
	\end{Proof}
	\begin{Corollary}
	If $ g $ is linear and $ -f \in  \text{U}_{ad},$ the conclusions of Theorem  are valid.
	\end{Corollary}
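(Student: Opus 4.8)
The plan is to invoke the preceding Theorem (whose conclusions are \eqref{eq1.43}--\eqref{eq1.46}) once its hypotheses $(\mathcal{H}_1)$ and $(\mathcal{H}_3)$ have been checked; the whole point of the corollary is that linearity lets us verify them while only assuming $-f\in U_{ad}$ rather than $-f\in\text{Int}_\infty(U_{ad})$. The key observation is that if $g$ is linear then $g(y)=ay$ with $a=g'\geq 0$ constant, so $g'(y_\varepsilon)=g'(y_\alpha)=a$ for every $\varepsilon$ and $\omega_\varepsilon=g'(y_\varepsilon)y_\varepsilon-g(y_\varepsilon)=ay_\varepsilon-ay_\varepsilon=0=\omega_\alpha$. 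Thus $A_\varepsilon=A+g'(y_\varepsilon)$ and $\omega_\varepsilon$ are independent of $\varepsilon$, and every uniform-convergence estimate $\|g'(y_\varepsilon)-g'(y_\alpha)\|_\infty\to 0$, $\|\omega_\varepsilon-\omega_\alpha\|_\infty\to 0$ used in the $\varepsilon$-limit arguments degenerates to an exact identity with vanishing right-hand side.

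First I would revisit the boundedness of $r_\varepsilon$. The constant $g'\equiv a$ is trivially locally Lipschitz, so the first part of $(\mathcal{H}_1)$ holds. In the proof bounding $r_\varepsilon$ the membership $-(f+\omega_\alpha)\in\text{Int}_\infty(U_{ad})$ was used only to ensure $-(f+\omega_\varepsilon)\in U_{ad}$ for small $\varepsilon$, i.e.\ to absorb the perturbation $\omega_\varepsilon-\omega_\alpha$ into an $L^\infty$-ball around $-(f+\omega_\alpha)$. Since here $\omega_\varepsilon\equiv\omega_\alpha=0$, one has $-(f+\omega_\varepsilon)=-f\in U_{ad}$ \emph{directly}, with no ball required. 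Taking $y=0$, $v=-f$, $\xi=0$ in \eqref{eq1.36} then reproduces verbatim the uniform bound on $r_\varepsilon\bigl(\langle\frac{\alpha}{(y_\varepsilon+\alpha)^2},y_\varepsilon\rangle+\langle\frac{\alpha}{(\xi_\varepsilon+\alpha)^2},\xi_\varepsilon\rangle\bigr)$, and the non-degeneracy of the bracket already established there yields the boundedness of $r_\varepsilon$.

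Next I would verify $(\mathcal{H}_2)$ directly with $p=2$, which bypasses the interior requirement hidden in $(\mathcal{H}_3)$ and Proposition 6.1. Given $\chi\in L^2(\Omega)$ with $\|\chi\|_{L^2(\Omega)}\leq 1$, set $\rho=1$, $v_\chi^\varepsilon=-f\in U_{ad}$, $\xi_\chi^\varepsilon=|\chi|=\chi^++\chi^-\in V_{ad}$ (indeed $\||\chi|\|_{L^2(\Omega)}\leq 1\leq R$), and let $y_\chi^\varepsilon$ solve $(A+a)y_\chi^\varepsilon=2\chi^-\geq 0$ with homogeneous Dirichlet data. As the zeroth-order coefficient $c+a$ of $A+a$ is nonnegative, the maximum principle gives $y_\chi^\varepsilon\geq 0$, hence $y_\chi^\varepsilon\in\tilde{K}$; elliptic regularity bounds $(y_\chi^\varepsilon,v_\chi^\varepsilon,\xi_\chi^\varepsilon)$ uniformly in $\tilde{K}\times U_{ad}\times V_{ad}$, trivially so since none of them depends on $\varepsilon$. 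Using $g'(y_\varepsilon)=a$ and $\omega_\varepsilon=0$, the identity $f+\omega_\varepsilon+v_\chi^\varepsilon+\xi_\chi^\varepsilon-\rho\chi=|\chi|-\chi=2\chi^-=(A+a)y_\chi^\varepsilon$ is exactly $(\mathcal{H}_2)$ with $p=2$.

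With $(\mathcal{H}_1)$ and $(\mathcal{H}_2)$ secured, estimate \eqref{eq1.37} holds and the theorem bounding $q_\varepsilon$ gives $q_\varepsilon$ bounded in $L^2(\Omega)$ independently of $\varepsilon$; passing to the limit $\varepsilon\to 0$ in \eqref{eq1.31}--\eqref{eq1.34} exactly as in the preceding proof produces multipliers $(q_\alpha,r_\alpha)\in L^2(\Omega)\times\R^+$ satisfying \eqref{eq1.43}--\eqref{eq1.46}. The main obstacle is the single subtle point that makes the corollary nontrivial: one must confirm that in both the $r_\varepsilon$-bound and the $(\mathcal{H}_2)$-construction the appeal to $\text{Int}_\infty(U_{ad})$ served \emph{only} to swallow the vanishing $\varepsilon$-perturbations of $g'$ and $\omega$, so that their identical disappearance for linear $g$ downgrades the hypothesis to the bare membership $-f\in U_{ad}$. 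Everything else is a transcription of the preceding arguments.
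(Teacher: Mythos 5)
Your proposal is correct and takes essentially the same route as the paper: the paper's own proof is a one-line remark that the construction of the preceding Theorem ($\xi_{\mathcal{X}}=\vert\mathcal{X}\vert$, $y_{\mathcal{X}}$ obtained via the maximum principle, $\rho=1$) carries over, with $-f\in U_{ad}$ sufficing to bound $q_{\varepsilon}$ in $L^{2}(\Omega)$. Your expansion --- observing that a constant $g'$ forces $\omega_{\varepsilon}\equiv\omega_{\alpha}$, so that the $L^{\infty}$-interior hypothesis (used only to absorb the vanishing $\varepsilon$-perturbations in both the $r_{\varepsilon}$-bound and the verification of $(\mathcal{H}_{2})$) degrades to bare membership $-f\in U_{ad}$ --- is precisely the content the paper leaves implicit.
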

	\begin{proof}
	If $ g $ is linear, we use the same proof as the one of Theorem $  6.4$ to bound $ q_{\varepsilon} $ in $ L^{2}(\Omega) $. It is sufficient that $ -f \in\text{U}_{ad}. $
	\end{proof}
	\begin{Remark}
We may choose for example $U_{ad} =[a, b] $ with $ a+3+\alpha \leq b-\alpha, ~ \alpha >0, ~ -b+\alpha \leq -a-3-\alpha$ and $ g(x)=- \frac{1}{1+x^{2}} $. In this cas  $ 0 \leq \omega_{\alpha} \leq 3  $ so that $ -(f+ \omega_{\alpha})\in  [a+\alpha, b-\alpha] \subset \text{Int}_{L^{\infty}}(U_{ad}). $ 
	\end{Remark}
 \vspace{1cm}

Next we describe numerical experiments that were carried out by means of AMPL languages   \cite{AMPL}, with the IPOPT solver  \cite{IPOPT} ("Interior Point OPTimizer"), KNITRO solver  \cite{KNITRO} ("Nonlinear Interior point Trust Region Optimization" ) and SNOPT solver  \cite{SNOPT}("Sparse Nonlinear OPTimizer"). 
	  \section{NUMERICAL RESULTS}
In this section, we report on some experiments considering a 2D-example. For two different smoothing functions, we present some numerical results using the IPOPT nonlinear programming algorithm on AMPL  \cite{AMPL}  optimization plateform. Our aim is just to verify the qualitative numerical efficiency of our approach. The discretization process was based on finite difference schemes with a $ N \times N $ grid and the size of the grid is given by $ h=\frac{1}{N} $ on each side of the domain.  \\
We take $ \Omega = ]0,1[ \times ]0,1[ \subset \R^{2}, ~A:= \bigtriangleup $ the Laplacian operator $ ( \bigtriangleup y=\frac{\partial^{2}y}{\partial x_{1}^{2}}+\frac{\partial^{2}y}{\partial x_{2}^{2}}).$ We fixe the tolerance to  $ \text{tol}=10^{-3}$ and the smoothing parameter to $ \alpha=10^{-3}.$ 
In our experiments, we use the two following functions  
	\begin{center}
	$  \theta_{\alpha}^{1}(x)=\frac{x}{x+\alpha},$\\
	\vspace{0.5cm}
	$  \theta_{\alpha}^{\text{log}}(x)= \frac{\text{log}(1+x)}{\text{log}(1+x+\alpha)}$.
\end{center}
	\subsection{Description of the example:}
	 We set $U_{ad}= L^{2}(\Omega),~ \nu = 0.1, ~z_{d}=1,~v_{d}=0, ~g(y)=y^{3}.$
	 \vspace{0.2cm}
	\begin{center}
	$  
	f(x_{1},x_{2}) = \left\{
    \begin{array}{ll}
        200[2x_{1}(x_{1}-0.5)^{2}-x_{2}(1-x_{2})(6x_{1}-2)] & \mbox{if } x_{1} \leq 0.5, \\
        200(0.5-x_{1}) & \mbox{else,}
    \end{array}
\right.
$
	\end{center}
 \vspace{0.5cm}
	\begin{center}
	$  
	\psi(x_{1},x_{2}) = \left\{
    \begin{array}{ll}
        200[x_{1}x_{2}(x_{1}-0.5)^{2}(1-x_{2})] & \mbox{if } x_{1} \leq 0.5, \\
        200[(x_{1}-1)x_{2}(x_{1}-0.5)^{2}(1-x_{2})] & \mbox{else.}
    \end{array}
\right.
$
	\end{center}

	\begin{figure}[H]
\begin{minipage}[H]{.46\linewidth}
     \begin{center}
             \includegraphics[width=6cm]{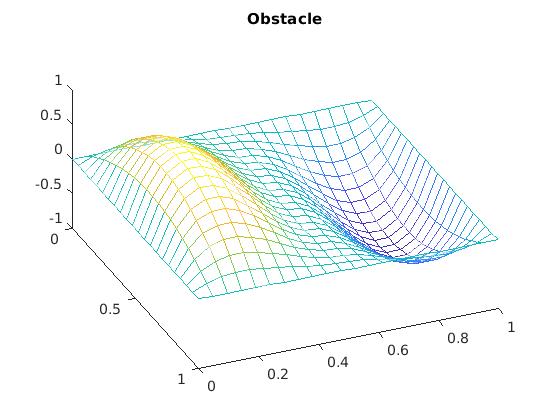}
         \end{center}
   \end{minipage} \hfill
   \begin{minipage}[H]{.46\linewidth}
    \begin{center}
            \includegraphics[width=6cm]{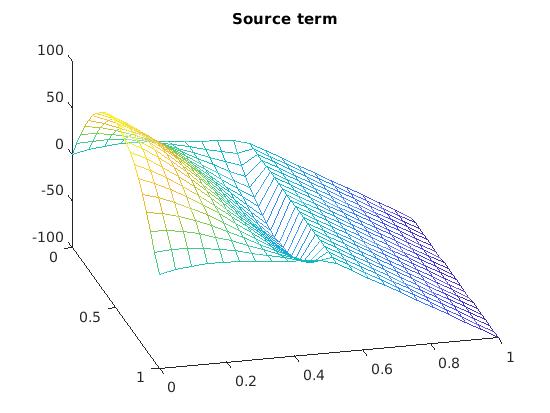}
        \end{center}
 \end{minipage}
       \caption{Data of the considered example}
	\end{figure}

\begin{figure}[h]
\begin{minipage}[c]{.46\linewidth}
     \begin{center}
             \includegraphics[width=6cm]{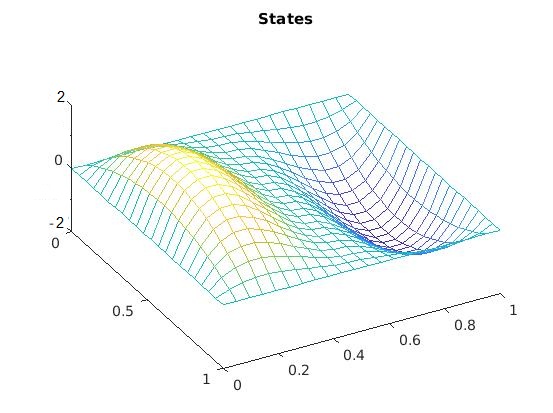}
         \end{center}
   \end{minipage} \hfill
   \begin{minipage}[c]{.46\linewidth}
    \begin{center}
            \includegraphics[width=6cm]{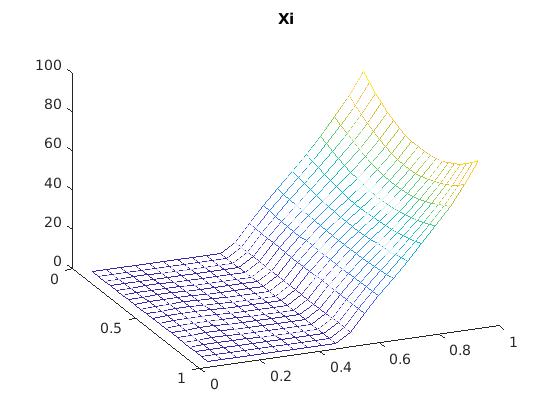}
        \end{center}
 \end{minipage}
       
\begin{minipage}[c]{.46\linewidth}
     \begin{center}
             \includegraphics[width=6cm]{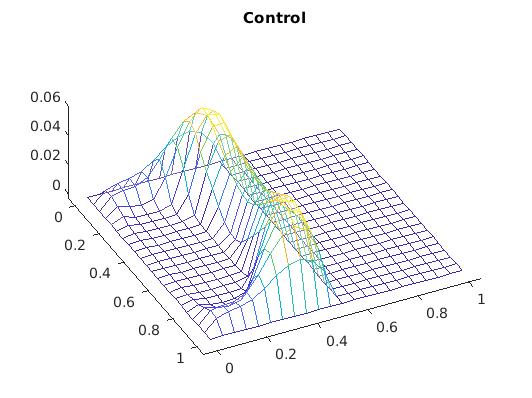}
         \end{center}
   \end{minipage} \hfill
   \begin{minipage}[c]{.46\linewidth}
    \begin{center}
            \includegraphics[width=6cm]{Obstacle.jpg}
        \end{center}
 \end{minipage}
       \caption{Optimal solution using the $ \theta_{\alpha}^{1}$, N=20 and $ \alpha=10^{-3} $ }
	\end{figure}

\subsection{Details of the numerical tests}
\subsubsection{Numerical simulation results using IPOPT solver}
In our experiments we made a logarithmic scaling for these two functions to bound their gradients. Each constraint 
\begin{center}
	$ \theta_{\alpha}((y-\psi)_{i,j})+\theta_{\alpha}(\xi_{i,j})\leq 1 $
\end{center}
is in fact replaced by the following inequality 
\begin{center}
	$ \alpha^{2}~\text{ln} \left(  \frac{\alpha}{(y-\psi)_{i,j}+\alpha}+\frac{\alpha}{\xi_{i,j}+\alpha}\right)\geq 0,~~~~~~~~~~~ ~~~~~~~~~~ 0 \leq i,j \leq N+1 $
\end{center}
in the case of the $  \theta_{\alpha}^{1} $ function and 

\begin{center}
	$ \alpha~\text{ln} \left( 2-\left(   \frac{\text{log}(1+(y-\psi)_{i,j})}{\text{log}(1+(y-\psi)_{i,j}+\alpha)} +\frac{\text{log}(1+\xi_{i,j})}{\text{log}(1+\xi_{i,j}+\alpha)} \right) \right)\geq 0, ~~~~~~~ 0 \leq i,j \leq N+1  $
\end{center}
	in the case of the $  \theta_{\alpha}^{\text{log}}$.\\ 	
 This scaling technique was proposed and used in  \cite{Birbil} to avoid numerical issues. The two following tables give in view of  exemple 7.1 and for different values of the  parameter $ \alpha $, the complementarity error, the state equation error  and the solution obtained  when using each of the two smoothing functions.

\begin{table}[h]
	\centering
	\begin{tabular}{|C{2cm}||C{4cm}|C{4cm}|C{4cm}|}
		\hline  $  \alpha$ & $\mid\mid  Ay-g(y)-f-v-\xi \mid\mid_{2} $ &   $\langle y-\psi, \xi \rangle /N^{2}$ &  $ \text{Obj} $  \\
		\hline   $  0.1$ & $9.7453e-14 $ & $ 9.911e-3$ & $ 2.8420e+02  $  \\
		\hline  $  10^{-2}$ & $ 8.2725e-14 $ & $9.998e-5 $ & $  2.8564e+02  $  \\
		\hline   $  10^{-3}$ & $8.73843e-14  $ & $ 9.675e-7 $ & $ 2.8572e+02 $ \\
		\hline  $ 10^{-4}$ & $1.75849e-06  $ & $5.243e-09 $ & $2.8573e+02 $  \\
		\hline 
	\end{tabular}
	\caption{Using the $ \theta^{1}_{\alpha} $ smoothing function -Example 7.1- N=20 }
\end{table}

	\begin{table}[h]
		\centering
		\begin{tabular}{|C{2cm}||C{4cm}|C{4cm}|C{4cm}|}
			\hline  $  \alpha$ & $\mid\mid  Ay-g(y)-f-v-\xi \mid\mid_{2} $ &   $\langle y-\psi, \xi \rangle /N^{2} $ &  $\text{Obj} $  \\
			\hline   $  0.1$ & $9.00735e-14  $ & $4.943e-3 $ & $2.8455e+02  $  \\
			\hline  $  10^{-2}$ & $8.89491e-14  $ & $6.013e-5  $ & $2.8564e+02  $  \\
			\hline   $  10^{-3}$ & $9.22557e-14  $ & $5.892e-7  $ & $2.8572e+02  $ \\
			\hline  $ 10^{-4}$ & $3.47932e-06  $ & $8.124e-8 $ & $2.8573e+02  $  \\
			\hline 
		\end{tabular}
		\caption{Using the 	$  \theta_{\alpha}^{\text{log}}$ smoothing function -Example 7.1- N=20 }
	\end{table}
	\subsubsection{Numerical comparisons using different solvers : IPOPT  \cite{IPOPT} , KNITRO \cite{KNITRO} and SNOPT \cite{SNOPT}  }

		\begin{table}[h]
		\centering
		\begin{tabular}{|C{4cm}||C{4cm}|C{4cm}|C{4cm}|}
			\hline  Solver & SNOPT &  KNITRO &  IPOPT  \\
			\hline   $ \mid\mid  Ay-g(y)-f-v-\xi \mid\mid_{2} $ & $1.06926e-12  $ & $6.00605e-14 $ & $6.71515e-14
			 $  \\
			\hline  $   \langle y-\psi, \xi \rangle /N^{2} $ & $8.7111e-5 $ & $8.7277e-5 $ & $8.70844e-5 $  \\
			\hline   $  \text{Obj}$ & $1.502476e+02 $ & $1.502476e+02   $ & $1.502476e+02  $ \\
			\hline  $\text{Iter}$ & $26236 $ & $200 $ & $ 198 $  \\
			\hline 
		\end{tabular}
		\caption{Using the 	$  \theta_{\alpha}^{1}$ smoothing function -Example 7.1- N=15 and $\alpha =10^{-2} $ }
	\end{table}

	\begin{figure}[H]
		
			\begin{center}
				\includegraphics[width=12cm,height=7cm]{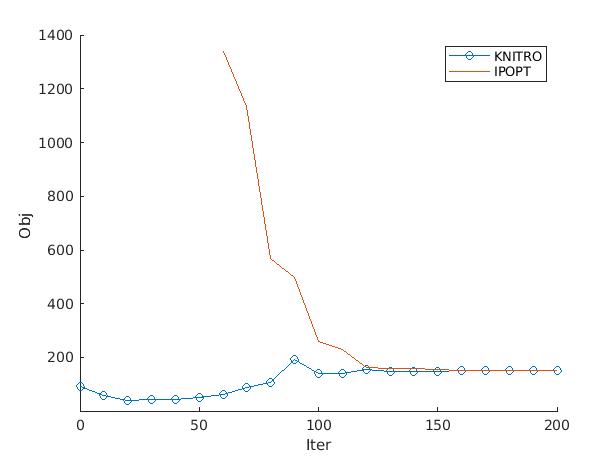}
			\end{center}
		\caption{Numerical comparison of the different numerical solver IPOPT and KNITRO }
	\end{figure}
	We remark that : \\
	The 3 algorithms obtain the same solution and almost the same objective value. This suggests that our approach can be implemented using any standard NLP solver.
	 \newpage
	
\section{Conclusions}	
In this work, we introduced a new regularization schema for optimal control of semilinear elliptic vartional inequalities with complementarity constraints. We proved that Lagrange multipliers exist. The existence of Lagrange multipliers is an important tool to describe and study algorithms to compute the solutions(s) of $(\mathcal{P}^{\alpha})	$ (that are "good approximations" of the original problem $(\mathcal{P})$).
In our numerical experiments, we used several standard NLP solvers and obtain promising results. The next step will be to develop an approach based on our optimality conditions.	


\end{document}